\begin{document}

\title{ON  CONTROLLED INVARIANCE OF REGULAR DISTRIBUTIONS%\thanks{Grants or other notes
%about the article that should go on the front page should be
%placed here. General acknowledgments should be placed at the end of the article.}
}

%\titlerunning{Short form of title}        % if too long for running head

\author{Qianqian Xia          %etc.
}

%\authorrunning{Short form of author list} % if too long for running head

\institute{Qianqian Xia \at
            Nanjing University of Information Science \& Technology, Nanjing, 210044, China\\
              \email{inertialtec@sina.com}
}

\date{Received: date / Accepted: date}
% The correct dates will be entered by the editor

\maketitle

\begin{abstract}
This paper considers the problem of controlled invariance of involutive regular distribution,  both for smooth and real analytic cases. After a review of some existing work, a precise formulation of the problem of local and  global controlled invariance of involutive regular distributions for both affine control systems and affine distributions is introduced. A complete characterization for local controlled invariance of involutive regular distributions for affine control systems is presented. A geometric interpretation for this characterization is provided. A result on local controlled invariance for real analytic affine distribution is given.  Then we investigate conditions that allow passages from local controlled invariance to global controlled invariance, for both smooth and real analytic affine distributions. We clarify existing results in the literature. Finally, for manifolds with a symmetry Lie group action, the problem of global controlled invariance is considered.
\keywords{ Controlled invariance\and Affine control system\and Generalized distribution\and Local\and  Global \and Real analyticity}
\end{abstract}

\section{Introduction}
\label{intro}
Let $r \in \{\infty, \omega\}$. Throughout the paper, all manifolds are assumed to be paracompact Hausdorff $C^r$-manifolds if not specified.  Let $L$ be a $C^r$-distribution on a $C^r$-manifold $M$, i.e. a subbundle of the tangent bundle $TM$, where the dimension of the the subspace $L_p$ is a constant function of $p \in M$.

In control theory one encounters generalized distributions \cite{sussmann,Stefan,Liu,Bullo}, where the subspace $L_p$ can have different dimensions at different points. We call a distribution of a constant rank a regular distribution.  A generalized distribution $L$ is of class $C^r$, if for each point $p \in M$, there exist local $C^r$-sections of $L$ whose values at $p$ span  $L_p$. We refer to Section \ref{sec:2} for a detailed discussion on generalized distributions.

Consider the affine control system $\Sigma$
\begin{equation}\label{eq:1.1}
\dot x=f(x)+\sum_{i=1}^{m}g_i(x)u_i
\end{equation}
on $M$, where $f, g_1, \cdots, g_m$ are $C^r$-vector fields on $M$.

Then associated to $\Sigma$ there is a  $C^r$-generalized distribution $G$ of $M$,  defined by
\begin{equation}\label{eq:1.4}
 G_p=\text{span}_{\mathbb{R}}\{g_1(p), \cdots, g_m(p)\},
\end{equation}
for $p \in M$, where $g_1, \cdots, g_m$ are said to be the $C^r$-generators for $G$.

The following definition is taken from \cite{1}.
\begin{definition}\label{def:1.1}
Let $U$ be an open subset of $M$. A pair of $C^r$-functions $(\alpha, \beta)$, where $\alpha: U \rightarrow \mathbb{R}^{m}$ and $\beta:  U\rightarrow GL(m, \mathbb{R})$, are called feedback functions on $U$. When $U=M$ we refer to feedback functions as global feedback. If
$\alpha=(\alpha_1 \cdots \alpha_m)^T$ and $\beta=(\beta_{ij})_{m\times m}$ are feedback functions on $U$, then the new system obtained by feedback,
\begin{equation}\label{eq:1.2}
\dot x=(f+\sum_{i=1}^{m}g_i \alpha_i)+\sum_{i=1}^{m}(\sum_{j=1}^m\beta_{ij}g_j)v_i=\hat f(x)+\sum_{i=1}^{m}\hat{g}_i(x)v_i,
\end{equation}
is said to be pure feedback equivalent to $\Sigma$.
\end{definition}
Let $D$ be an involutive regular  $C^r$-distribution of $M$.   In the previous work there is the following definition of local controlled invariance \cite{1,Hirschorn,Krener,Nijmeijer,Ikggm,Byrnes,Schaft,schaft,Isidory,Cheng}.
\begin{definition}\label{def:1.2}
An involutive regular $C^r$-distribution $D$ of $M$ is said to be locally controlled invariant for $\Sigma$ at $p \in M$, if there exist a sufficiently small neighborhood $U$ of $p$, and $C^r$-feedback functions $(\alpha, \beta)$ on $U$ such that
\begin{subequations}\label{eq:1.3}
\begin{align}
&[\hat f, \Gamma^r(U;D)] \subseteq \Gamma^r(U;D), \label{eq:1.3a} \\
&[\hat g_i, \Gamma^r(U;D)]\subseteq \Gamma^r(U;D), i=1, \cdots, m, \label{eq:1.3b}
\end{align}
\end{subequations}
where $\hat f, \hat g_i,i=1, \dots, m,$ are given by $(\ref{eq:1.2})$,  and $\Gamma^r(U;D)$ denotes the space of $C^r$-sections of $D$ over $U$.
\end{definition}
Let $\text{span}_{C^r(U)}\{g_1, \cdots, g_m\}$ be the module over $C^r(U)$ spanned by $g_1, \cdots, g_m$, where $C^r(U)$ denotes the ring of $C^r$-functions on the open subset $U$. From \eqref{eq:1.3a} and \eqref{eq:1.3b} it is obvious that a necessary condition for local controlled invariance  is that $D$ satisfies
\begin{subequations}
\begin{align}
&[f, \Gamma^r(U;D)] \subseteq \Gamma^r(U;D)+\text{span}_{C^{r}(U)}\{g_1, \cdots, g_m\}, \label{eq:1.8a} \\
&[g_j, \Gamma^r(U;D)]\subseteq \Gamma^r(U;D)+\text{span}_{C^{r}(U)}\{g_1, \cdots, g_m\}, j=1, \cdots, m. \label{eq:1.8b}
\end{align}
\end{subequations}

The notion of controlled invariant subspaces in the linear systems theory dates back to the end of sixties \cite{Basile,Wonham}. A modern account of the use of controlled and conditioned invariant subspaces is given in \cite{wonham}. The nonlinear generalization of the notion of controlled
invariance, which plays a crucial role in disturbance decoupling problems,  has been initiated in \cite{Hirschorn,Krener}. See also \cite{Isidory,MW,isidory,Moog,Krener2,Respondek}. There has been a lot of work on conditions that ensure local controlled invariance  for the $C^{\infty}$-affine control system $\Sigma$, mainly focusing on the following conditions:
\begin{subequations}
\begin{align}
&[f, D](q)\subseteq (D+G)(q), \label{eq:1.4a} \\
&[g_j, D](q)\subseteq (D+G)(q), j=1, \cdots, m.\label{eq:1.4b}
\end{align}
\end{subequations}
for each $q$ in a sufficiently small neighborhood $U$ of $p \in M$. In \cite{Schaft},  Theorem 7.5 states that if $G$ and $D\cap G$ have constant rank, then \eqref{eq:1.4a} and \eqref{eq:1.4b} are sufficient to ensure that  $D$ is locally controlled invariant for $\Sigma$. In \cite{1,Hirschorn,Krener,Nijmeijer,Ikggm,Byrnes,schaft,Isidory}, the above conditions are replaced by only requiring that $G+D$ has constant rank, or equivalently $(G+D)/D$ has constant rank, then the sufficiency of  \eqref{eq:1.4a} and \eqref{eq:1.4b} is ensured. This is known as Quaker Lemma. In \cite{Cheng}, a necessary and sufficient condition concerning the $C^{\infty}(U)$-module spanned by $g_1/D, \cdots, g_m/D$ is presented for the local controlled  invariance of the involutive regular distribution $D$, where no additional condition of constant rank is required. We refer to Remark \ref{remark 1} in Section \ref{sec:3} for a precise statement of this result and its equivalence with our result. However, the proof provided in \cite{Cheng}
is not a geometric one.

In this paper, by exploring the underlying geometric structure, we will show that instead of \eqref{eq:1.4a} and \eqref{eq:1.4b}, \eqref{eq:1.8a} and \eqref{eq:1.8b} are necessary and sufficient for local controlled invariance for $C^r$-affine control system $\Sigma$. We will provide an intrinsic and complete differential geometric proof for our result. Differential geometric interpretations for the feedback functions $(\alpha, \beta)$ required in Definition \ref{def:1.2} will be given. A previous work using geometric methods has been done in \cite{1} on the proof of Quaker Lemma, where connection and parallel translation are introduced. Although  the result there is not complete (we will give a detailed explanation for this in Section \ref{sec:3}), a geometric intuition is  provided which inspires our consideration here.

One thing that should be pointed out is that it can not  be assumed that any  $C^r$-local generators for a $C^r$-generalized distribution can span  the module of  $C^r$-local sections of the  $C^r$-generalized distribution \cite{Lewis,Drager}. Hence in general the conditions of \eqref{eq:1.8a} and \eqref{eq:1.8b} is stronger than  \eqref{eq:1.4a} and \eqref{eq:1.4b}. This  observation  leads  to the following definition of local controlled invariance for generalized affine distribution, which  is  equivalent with Definition \ref{def:1.2} if $p$ is a regular point for $G$ and is a new one if  $p$ is a singular point for $G$.
\begin{definition}\label{def:1.3}
An involutive regular $C^r$-distribution $D$ of $M$ is said to be locally  controlled invariant for a $C^r$-generalized affine distribution $\Delta$ at $p$, if there exist a sufficiently small local neighborhood $U$ of $p$ and $C^r$-vector fields $f,  g_i, i \in I$ on $U$, such that  $[f, \Gamma^r(U;D)]\subseteq \Gamma^r(U;D)$ and $[g_i, \Gamma^r(U;D)] \subseteq \Gamma^r(U;D), i \in I$, where $\Delta_q=f(q)+\text{span}_{\mathbb{R}}\{g_i(q)|i \in I\}$ for each $q\in U$, and $I$ is an index set.
\end{definition}

We will provide a sufficient condition for local controlled invariance for $C^{\omega}$-generalized affine distribution in Section \ref{sec:3}.

Another main problem that we will investigate in this paper is the problem of global controlled invariance. We have the following definitions for affine distributions and affine control systems respectively.
\begin{definition}\label{def:1.4}
An  involutive regular $C^r$-distribution $D$ of $M$ is said to be  globally  controlled invariant for a  $C^r$-generalized affine distribution $\Delta$, if there exist $C^r$-vector fields $ f, g_i, i\in I$ on $M$,  such that
\begin{subequations}
\begin{align}
&[f, \Gamma^r(D)]\subseteq \Gamma^r(D),\label{eq:1.7a}\\
&[g_i, \Gamma^r(D)] \subseteq \Gamma^r(D), i \in I.\label{eq:1.7b}
\end{align}
\end{subequations}
where $\Delta_p= f(p)+\text{span}_{\mathbb{R}}\{ g_i(p)| i \in I\}$ for all $p \in M$, and $I$ is an index set.
\end{definition}
\begin{definition}
 An  involutive regular  $C^r$-distribution $D$ of $M$ is said to be globally controlled invariant for $\Sigma$, if there exist global feedback  $(\alpha, \beta)$ such that
\begin{subequations}
\begin{align}
&[\hat f, \Gamma^r(D)] \subseteq \Gamma^r(D),\label{eq:1.6a}\\
&[\hat g_i, \Gamma^r(D)]\subseteq \Gamma^r(D), i=1, \cdots, m,\label{eq:1.6b}
\end{align}
where $\hat f, \hat g_i, i=1, \cdots, m$, are given by $(\ref{eq:1.2})$ with $U=M$.
\end{subequations}
\end{definition}

Since $D$ has constant rank,  there exist  generators $d_1, \cdots, d_k \in \Gamma^r(U;D)$ for the $C^r(U)$-module of $\Gamma^r(U;D)$ of $D$. Besides, $d_1|V, \cdots, d_k|V \in \Gamma^r(V;D)$ are generators for the $C^r(V)$-module of $\Gamma^r(V;D)$ of $D$, where $V$ is any open subset of $U$. Since  $M$ is paracompact and Hausdorff, then it follows from partition of unity  \cite{Abraham} in the smooth case and Cartan's Theorem A \cite{Cartan} in the real analytic case that, for each $p \in M$,  there exist generators for the module of local sections of $D$ on a neighborhood of $p$ that are the restrictions of global sections of $D$ to this neighborhood. Then by restrictions of  globally defined vector fields in \eqref{eq:1.7a}, \eqref{eq:1.7b}, \eqref{eq:1.6a}, and \eqref{eq:1.6b} to the above neighborhood of $p$, it follows immediately that global controlled invariance implies the corresponding local controlled invariance.  However, if $D$ does not have constant rank, complications may arise. A previous work on controlled invariance of singular distribution can be found in \cite{Rama}.

It is obvious that in general  we can not infer from local controlled invariance at every point in $M$ that global controlled invariance holds. In this paper, we will investigate cases and conditions that admit global controlled invariance.  Note that \cite{1} studies a special case of global controlled invariance for affine control systems, where $g_1, \cdots, g_m$ are assumed to be globally defined linearly independent vector fields. In  Section \ref{sec:4} we will investigate the problem of global controlled invariance for generalized affine distributions from the following two aspects: (1) special geometric structure of the state space $M$ that admits global controlled invariance; (2) conditions that allow passages from local controlled invariance to global controlled invariance. Both require techniques  different from \cite{1}.  We will also review  result in \cite{1} and do some possible generalization.  We refer to \cite{1} for detailed information about existing work on global controlled invariance.

In summary, the main contribution of the paper is two fold: (1) We provide a differential geometric interpretation for the problem of local controlled invariance of involutive regular distribution for $C^r$-affine control system. (2) We provide conditions that allow  global controlled invariance for (i) $C^r$-generalized affine distribution of general $C^r$-manifold; (ii) smooth generalized affine distribution of smooth manifold with a symmetry Lie group action.

The paper is organized as follows. Section \ref{sec:2} provides some basic definitions, results and facts on generalized distributions.
In Section \ref{sec:3} we study local controlled invariance for both affine control system and affine distribution.
The corresponding problems of global controlled invariance are investigated  in  Section \ref{sec:4},  and the conclusions follow in Section \ref{sec:5}.
\section{Generalized distributions}
\label{sec:2}
In this section we introduce some basic notions and results on generalized distributions and generators used in the paper.  We refer to \cite{Lewis} for a comprehensive and elegant treatment on this topic.
\begin{definition}
Let $M$ be a $C^r$-manifold. A generalized distribution of $M$ is  a subset $G \subseteq TM$ such that for each
$p\in M$, the subset $G_p\subseteq T_pM$ is a subspace of $T_pM$.
A generalized affine distribution $\Delta$ of $M$ is an assignment of an affine subspace $\Delta_p\subseteq T_pM$ for each $p \in M$.
Associated with $\Delta_p$ is a subspace of $T_pM$. The distribution
which assigns to $p$ this subspace is the linear part of $\Delta$ and is denoted by $L(\Delta)$. We do not assume the subspaces $G_p$ or $L(\Delta_p)$ vary continuously with $p$ or have constant dimension.

A generalized distribution $G$ is of class $C^{r}$,  if, for each $p \in M$, there exist a neighbourhood $U$ of $p$ and a family $(g_j)_{j \in J}$ of $C^r$-sections, called local
generators, of $TM|U$ such that
\begin{equation}\label{eq:2.3}
G_q= \text{span}_{\mathbb{R}}\{g_j(q)|j \in J\},
\end{equation}
for each $q \in U$. A generalized affine distribution $\Delta$ is of class $C^r$, if, for each $p \in M$, there exist a neighbourhood $U$ of $p$ and a family $(h_0, (h_i)_{i \in I})$ of $C^r$-sections, called local
generators, of $TM|U$ such that
\begin{equation}\label{eq:2.3}
\Delta_q= h_0(q)+\text{span}_{\mathbb{R}}\{h_i(q)|i \in I\},
\end{equation}
for each $q \in U$.

A generalized distribution $G$  of class $C^{r}$ is locally finitely generated if, for each $p \in M$, there exist a neighbourhood $U$ of $p$ and a family $(g_1, \cdots, g_k)$ of
$C^r$-sections, called local generators, of $TM|U$ such that
\begin{equation}
G_q = \text{span}_{\mathbb{R}}\{g_1(q), \cdots, g_k(q)\},
\end{equation}
for each $q \in U$. A generalized affine distribution $\Delta$  of class $C^{r}$ is locally finitely generated if the associated generalized distribution $L(\Delta)$ of class $C^{r}$  is locally finitely generated.

A generalized distribution $G$  of class $C^{r}$,  is globally finitely generated if,  there exists  a family $(g_1, \cdots, g_k)$ of
$C^r$-sections, called global generators, of $TM$ such that for each $p\in M$,
\begin{equation}
G_p = \text{span}_{\mathbb{R}}\{g_1(p), \cdots, g_k(p)\}.
\end{equation}
The nonnegative integer $\dim(G_p)$ at $p$ is called the rank of $G$ at $p$ and is sometimes denoted by $\text{rank}(G_p)$.
A generalized affine distribution $\Delta$  of class $C^{r}$,  is globally finitely generated if,  there exists  a family $(h_0, h_1, \cdots, h_l)$ of
$C^r$-sections, called global generators, of $TM$ such that for each $p\in M$,
\begin{equation}
\Delta_p = h_0(p)+\text{span}_{\mathbb{R}}\{h_1(p), \cdots, h_l(p)\}.
\end{equation}
\end{definition}

\begin{definition}
Let $M$ be a $C^r$-manifold. Let $G$ be a $C^r$-generalized
distribution of $M$ and $\Delta$ be a $C^r$-generalized affine distribution of $M$.
If $U\subseteq M$ is open, a local section of $G$ or $\Delta$ over $U$ is a section $g: U \rightarrow TM$ such that $g(p) \in G_p$ or $g(p) \in \Delta_p$ for every $p \in U$. A local section $g$ of $G$ or $\Delta$ is of class $C^{r}$, if it is of class $C^{r}$ as a local section of $TM$. The set of local sections of $G$ or $\Delta$ over $U$ of class $C^{r}$ is denoted by $\Gamma^r(U;G)$ or $\Gamma^r(U;\Delta)$,
or simply by $\Gamma^r(G)$ or $\Gamma^r(\Delta)$ when $U = M$.
\end{definition}

It follows immediately that $\Gamma^r(U;G)$ is a module over $C^r(U)$.
An equivalent characterization of $C^{r}$-generalized distribution or $C^r$-generalized affine distribution is given as follows:
a generalized distribution $G$ or a generalized affine distribution $\Delta$ of $M$ is of class $C^{r}$,  if and only if for every point $p$ and every $v\in G_{p}$ or $\Delta_p$, there exist a neighbourhood $U$ of $p$ and a $C^{r}$-section $s \in \Gamma(U; G)$  or $\Gamma(U; \Delta)$ such that $s_{p}=v$.

If $G$ is a  smooth generalized distribution of a connected smooth manifold $M$, the following result ensures that $G$ is globally finitely generated.
\begin{theorem}\cite{Sussmann,Drager}\label{tm:2.4}
Let $M$ be a connected smooth  manifold. Let $G$ be a smooth generalized distribution of $M$. Then $G$ is globally finitely generated.
\end{theorem}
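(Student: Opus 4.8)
The plan is to argue in two stages: first that $G$ is locally finitely generated, and then to globalize this via second countability and a smooth partition of unity, finally cutting the resulting countable family of global generators down to a finite one. The organizing elementary fact is the \emph{lower semicontinuity of the rank}: if $s_1,\dots,s_k$ are smooth local sections of $G$ with $s_1(p),\dots,s_k(p)$ linearly independent, then they stay linearly independent — hence remain sections of $G$ — on a neighbourhood of $p$, so $\operatorname{rank}(G_q)\ge k$ there. Thus each set $O_k=\{q\in M:\operatorname{rank}(G_q)\ge k\}$ is open, and since $\operatorname{rank}(G_q)\le n:=\dim M$ for all $q$ we get a finite decreasing chain of open sets $M=O_0\supseteq O_1\supseteq\cdots\supseteq O_n\supseteq O_{n+1}=\emptyset$, with $G$ of constant rank $k$ on the locally closed set $O_k\setminus O_{k+1}$.

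Stage 1 (local finite generation). Fix $p\in M$ and a neighbourhood $U$ on which $G$ has smooth generators $(g_j)_{j\in J}$. The set of values $\{\operatorname{rank}(G_q):q\in U\}$ is a bounded set of integers, so it attains a maximum $m$ at some $q_0\in U$; choosing $g_{j_1},\dots,g_{j_m}$ whose values at $q_0$ are independent, these are independent on a neighbourhood $V\subseteq U$ of $q_0$ and, since $\operatorname{rank}(G_q)\le m$ on $U$, they span $G_q$ for every $q\in V$. Hence $G$ is finitely generated near every point of the open set $O_m\cap U$. The remaining points, where the rank is not locally maximal, are treated by downward induction on the rank: remove a slightly shrunken copy of the top open stratum, work on the relatively closed complement where the maximal rank has dropped, and glue the generators obtained there to those on the removed piece using smooth cutoff functions. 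Since the chain $O_0\supseteq\cdots\supseteq O_n$ has only finitely many terms the process terminates, producing, near any $p$, finitely many smooth sections of $G$ whose values span $G_q$ at every $q$ in a neighbourhood of $p$; i.e. $G$ is locally finitely generated.

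Stage 2 (globalization). A connected (paracompact Hausdorff) manifold is second countable, so $M$ is covered by countably many opens $U_i$ on each of which $G$ has finitely many smooth generators $g_{i,1},\dots,g_{i,k_i}$; with a subordinate smooth partition of unity $\{\rho_i\}$, the vector fields $\rho_i g_{i,l}$ extend by zero to global smooth sections of $G$, and at each point $p$ one has $\operatorname{span}_{\mathbb R}\{(\rho_i g_{i,l})(p)\}=G_p$ (take $i$ with $\rho_i(p)\ne 0$). This gives a countable family $(X_\nu)_{\nu\in\mathbb N}$ of global smooth sections of $G$ with $\operatorname{span}_{\mathbb R}\{X_\nu(p):\nu\in\mathbb N\}=G_p$ for all $p$. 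To extract a finite subfamily, descend the chain $O_n\supseteq O_{n-1}\supseteq\cdots\supseteq O_0$ once more: on the constant-rank stratum $O_k\setminus O_{k+1}$ only $k$ of the $X_\nu$ are needed pointwise, so lower semicontinuity lets one select, locally, finitely many $X_\nu$ (or finitely many real linear combinations of them) spanning $G$ there, and a partition-of-unity gluing across the finitely many strata — using again the uniform bound $n$ on the rank — yields finitely many global smooth sections $Y_1,\dots,Y_N$ with $\operatorname{span}_{\mathbb R}\{Y_1(p),\dots,Y_N(p)\}=G_p$ for all $p\in M$. This is the claim.

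The main obstacle is the inductive surgery on nested open strata in Stage 1 at the singular points — those $p$ where $\operatorname{rank}(G)$ fails to be locally maximal — and, for the same structural reason, the reduction from countably many to finitely many generators in Stage 2; both succeed exactly because $\dim M<\infty$ forces the rank to be bounded and hence the stratification $O_0\supseteq\cdots\supseteq O_n$ to be finite. (The argument leans essentially on a smooth partition of unity, which has no $C^\omega$ counterpart — indeed the real analytic analogue of the statement can fail — so the analytic theory instead requires coherence and Cartan's Theorem A.)
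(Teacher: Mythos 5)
First, a point of reference: the paper does not prove Theorem~\ref{tm:2.4}; it is quoted from \cite{Sussmann,Drager} without proof, so your attempt can only be measured against those sources. Your setup is the right one --- lower semicontinuity of the rank, the finite open chain $M=O_0\supseteq O_1\supseteq\cdots\supseteq O_{n+1}=\emptyset$, bump functions to turn local sections into global sections of $G$, and second countability to get a countable spanning family. But both of the genuinely hard steps are carried by the phrase ``partition-of-unity gluing,'' and that device does not do what you need. If $\{\rho_\alpha\}$ is a partition of unity subordinate to a cover $\{V_\alpha\}$ and on each $V_\alpha$ some $k$-tuple $X_{i_1(\alpha)},\dots,X_{i_k(\alpha)}$ spans $G$, the glued sections $Y_l=\sum_\alpha\rho_\alpha X_{i_l(\alpha)}$ are certainly still sections of $G$ (fibers are subspaces), but at a point lying in several $V_\alpha$ the vectors $Y_1(q),\dots,Y_k(q)$ are averages of \emph{different} spanning tuples and can cancel, so spanning is lost. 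The standard repair --- present in both \cite{Sussmann} and \cite{Drager} and absent from your sketch --- is the covering-dimension coloring argument: refine the cover into $n+1$ subfamilies of pairwise \emph{disjoint} open sets, so that each glued section agrees near any given point with a single local generator times a nonnegative factor and no cross terms occur; this is also what produces the explicit finite bound on the number of generators.

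The same omission undermines Stage~1. Near a singular point $p$ the top stratum $O_m\cap U$ is an open set that can accumulate on $p$ with infinitely many components, so already producing \emph{finitely} many sections of $G$ over $U$ that span $G$ at every point of $O_m\cap U$ requires the disjoint-support coloring (note also that such sections must degenerate at $\partial(O_m\cap U)$ while still spanning arbitrarily close to it, as in the paper's first example). Moreover the induction as you describe it does not close: after removing a shrunken copy $O_m'$ with $\overline{O_m'}\subseteq O_m\cap U$, the complement $U\setminus O_m'$ still contains points of rank $m$ (those of $O_m\cap U$ near its boundary), so ``the maximal rank has dropped'' on the complement is false and the downward induction on the rank has no decreasing quantity. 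Finally, the set on which a fixed finite tuple of sections spans $G$ is neither open nor closed (independence is open, but the ambient rank can jump up), so ``lower semicontinuity lets one select, locally, finitely many $X_\nu$ spanning $G$ there'' only gives spanning on a neighbourhood intersected with $\{\operatorname{rank}\le k\}$, and the passage from the locally closed strata back to open sets has to be argued. In short: the skeleton is right, but the cancellation-free gluing (coloring by covering dimension) is the missing idea, and without it neither the local finite spanning at singular points nor the countable-to-finite reduction is established.
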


If $G$ is a $C^{\omega}$-generalized distribution of a $C^{\omega}$-manifold $M$, the following results ensure that $G$ is locally  finitely generated. Besides, there exist sufficiently small neighborhood $U$ of $p \in M$, such that the module $\Gamma^{\omega}(U; G)$ over $C^{\omega}(U)$
is finitely generated.

\begin{theorem}\cite{Lewis}\label{tm:2.3}
Let $M$ be a real analytic manifold and $G$ be a generalized distribution of $M$, then the following statements are equivalent:\\
1. $G$ is real analytic.\\
2. For each $p \in M$ and each $v_p \in G_p$, there exists a real analytic section $g\in \Gamma^{\omega}(G)$
over $M$ such that $g_p=v_p$.\\
3. For each $p\in M$ there exist a neighbourhood $\mathcal{N}$ of $p$ and real analytic sections
$g_1, \cdots, g_k \in \Gamma^{\omega}(G)$ over $M$ such that
\[G_q=\text{span}_{\mathbb{R}}\{g_1(q), \cdots, g_k(q)\},\]
for each $q \in \mathcal{N}$.
\end{theorem}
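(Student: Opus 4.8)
The directions $3\Rightarrow 2\Rightarrow 1$ require no machinery. For $3\Rightarrow 2$, given $p$ and $v_p\in G_p$, pick a neighbourhood $\mathcal N$ and global sections $g_1,\dots,g_k\in\Gamma^{\omega}(G)$ as in statement~3; writing $v_p=\sum_i c_i\,g_i(p)$ with $c_i\in\mathbb R$, the section $g=\sum_i c_i\,g_i\in\Gamma^{\omega}(G)$ satisfies $g_p=v_p$ (using that each fibre is a linear subspace, so $\mathbb R$-combinations of sections of $G$ are sections of $G$). For $2\Rightarrow 1$, statement~2 supplies, for every $p$ and every $v\in G_p$, a $C^{\omega}$-section of $G$ defined on $M$ — hence on a neighbourhood of $p$ — passing through $v$; by the equivalent characterization of $C^{\omega}$-generalized distributions recalled just before the theorem, $G$ is real analytic. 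So the whole content is $1\Rightarrow 3$, and the loop is then closed by $3\Rightarrow 2\Rightarrow 1$.

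\textbf{Setup for $1\Rightarrow 3$.} The plan is to pass to the sheaf $\mathscr G$ of germs of $C^{\omega}$-sections of $G$. Since every $G_q$ is a linear subspace, $\mathscr G$ is a subsheaf of $\mathscr O_M$-modules of the sheaf $\mathscr V_M$ of germs of real-analytic vector fields (here $\mathscr O_M=\mathscr C^{\omega}_M$); note $\Gamma^{\omega}(M;\mathscr G)=\Gamma^{\omega}(G)$, and any section of $\mathscr G$ takes values in $G$. The crux — and the only substantial step — is the

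\emph{Claim: $\mathscr G$ is a coherent sheaf of $\mathscr O_M$-modules.}

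\textbf{On the Claim (the main obstacle).} The stalk $\mathscr G_p$ sits inside $\mathscr V_{M,p}\cong\mathscr O_{M,p}^{\,n}$, $n=\dim M$, and $\mathscr O_{M,p}$ is a Noetherian local ring, so $\mathscr G_p$ is finitely generated over $\mathscr O_{M,p}$; since $\mathscr O_M$ is a coherent sheaf of rings (Oka) and $\mathscr V_M$ is locally free of finite rank, it then suffices to show $\mathscr G$ is \emph{locally} finitely generated, for a finitely generated subsheaf of a coherent sheaf over a coherent sheaf of rings is coherent. The delicate point is exactly this promotion of stalkwise to local finite generation: pointwise $\mathbb R$-spanning of $G$ by $C^{\omega}$ generators $(g_j)_{j\in J}$ near $p$ is \emph{not} enough, since the germs $(g_j)_p$ need not generate $\mathscr G_p$ (already for $g_1=x^2\partial_x$, $g_2=x^3\partial_y$ on $\mathbb R^2$, where $\mathscr G$ is generated near $0$ by $x\partial_x,\,x\partial_y$). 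The standard route is complexification: extend the relevant real-analytic data to a complex neighbourhood $\widetilde M\supseteq M$, invoke Oka's coherence theorem and Cartan's coherence results to see the analytically generated subsheaf is coherent there, and descend to $M$ (alternatively, argue by induction along the analytic maximal-rank stratification of $G$). I would follow the treatment of \cite{Lewis} for this; it is the part I expect to be genuinely hard, everything else being formal.

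\textbf{Conclusion of $1\Rightarrow 3$, granting the Claim.} By Cartan's Theorem~A for coherent real-analytic sheaves on the paracompact Hausdorff real-analytic manifold $M$ (\cite{Cartan}), the global sections $\Gamma^{\omega}(G)=\Gamma^{\omega}(M;\mathscr G)$ generate $\mathscr G_p$ over $\mathscr O_{M,p}$; choose $g_1,\dots,g_k\in\Gamma^{\omega}(G)$ whose germs generate $\mathscr G_p$. The cokernel of the induced morphism $\mathscr O_M^{\,k}\to\mathscr G$ is locally finitely generated with vanishing stalk at $p$, hence vanishes on a neighbourhood $\mathcal N$ of $p$, so $(g_1)_q,\dots,(g_k)_q$ generate $\mathscr G_q$ for all $q\in\mathcal N$. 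Finally, for $q\in\mathcal N$ and $v_q\in G_q$, real analyticity of $G$ (equivalent characterization) yields a local $C^{\omega}$-section $s$ of $G$ with $s(q)=v_q$; writing $s_q=\sum_i a_i\,(g_i)_q$ with $a_i\in\mathscr O_{M,q}$ and evaluating at $q$ gives $v_q=\sum_i a_i(q)\,g_i(q)\in\mathrm{span}_{\mathbb R}\{g_1(q),\dots,g_k(q)\}$, while conversely each $g_i(q)\in G_q$. Hence $G_q=\mathrm{span}_{\mathbb R}\{g_1(q),\dots,g_k(q)\}$ for all $q\in\mathcal N$, which is statement~3.
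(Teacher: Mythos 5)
Your proof is correct and follows essentially the same route as the paper, which derives this theorem exactly as you do: coherence of the sheaf of $C^{\omega}$-sections of $G$ (the paper's Proposition~\ref{prop:2.1}, deferred to \cite{Lewis} just as you defer your Claim) combined with Cartan's Theorem~A (the paper's Theorem~\ref{tm:2.1}), the remaining implications being formal. Your additional details --- the cokernel argument promoting stalkwise generation at $p$ to generation on a neighbourhood, and the evaluation step recovering the $\mathbb{R}$-span --- are correct fillings-in of what the paper leaves implicit.
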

\begin{proposition}\cite{Lewis}\label{pro:2.13}
Let $M$ be a real analytic manifold and $G$ be a real analytic generalized distribution of $M$.  Then for each $p \in M$, there exist a neighbourhood
$ U$ of $p$ and $s_1, \cdots, s_l \in \Gamma^{\omega}(U; G)$ such that $s_1|V, \cdots, s_l|V$ are generators for the module $\Gamma^{\omega}(V; G)$ over $C^{\omega}(V)$, for every open set $V \subseteq U$.
\end{proposition}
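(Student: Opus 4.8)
The plan is to restate the assertion sheaf-theoretically and reduce it to two inputs: the coherence of the sheaf of germs of real analytic sections of $G$, and Cartan's Theorem B for coherent analytic sheaves on real analytic manifolds. Write $\mathscr{A}$ for the sheaf of germs of real analytic functions on $M$, and let $\mathscr{G}$ be the sheaf of $\mathscr{A}$-modules whose stalk at $q$ consists of germs at $q$ of real analytic local sections $s$ of $TM$ with $s(q')\in G_{q'}$ for all $q'$ near $q$, so that $\Gamma(V;\mathscr{G})=\Gamma^{\omega}(V;G)$ for every open $V$. In this language the proposition says exactly that $\mathscr{G}$ admits, near each $p$, finitely many sections over some open $U$ whose images in $\Gamma(V;\mathscr{G})$ generate it as a $C^{\omega}(V)$-module for every open $V\subseteq U$.

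First I would prove that $\mathscr{G}$ is a coherent sheaf of $\mathscr{A}$-modules, generated near a given $p$ by the germs of the pointwise generators $g_{1},\dots,g_{k}$ supplied on a neighbourhood $\mathcal{N}$ of $p$ by Theorem \ref{tm:2.3}(3). Fixing a chart on $\mathcal{N}$ we identify germs of vector fields with $\mathscr{A}^{n}$, $n=\dim M$, and consider the morphism $\sigma:\mathscr{A}^{k}\to\mathscr{A}^{n}$ carrying the standard generators to $g_{1},\dots,g_{k}$; its image subsheaf $\mathscr{M}$ is coherent by Oka's coherence theorem (coherence of $\mathscr{A}$, hence of $\mathscr{A}^{k}$ and $\mathscr{A}^{n}$, hence of the image of a morphism between them), and $\mathscr{M}\subseteq\mathscr{G}$ is immediate. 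I expect the reverse inclusion $\mathscr{G}|_{\mathcal{N}}\subseteq\mathscr{M}$ --- equivalently, that every germ of a real analytic section of $G$ already lies in the $\mathscr{A}_{q}$-module generated by the germs of $g_{1},\dots,g_{k}$ --- to be the main obstacle: it is false in the $C^{\infty}$ category (pointwise generators need not generate the module of sections) and is precisely where real analyticity is indispensable. I would establish it by passing to a Stein complexification $\widetilde{M}$ of $M$, extending the $g_{i}$ and a given section $s$ to holomorphic objects near $p$, applying Oka's theorem to the holomorphic analogue $\widetilde{\mathscr{M}}$ of $\mathscr{M}$, and using that the holomorphic section of $\mathscr{O}^{n}/\widetilde{\mathscr{M}}$ induced by the extension of $s$ has vanishing fibre values along the totally real submanifold $M$, whence by the analytic identity principle it vanishes identically near $p$ and $s$ is a section of $\widetilde{\mathscr{M}}$, hence of $\mathscr{M}$, near $p$; this coherence fact is available in \cite{Lewis}.

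With coherence secured the remainder is a standard cohomological globalization. I would take $U:=\mathcal{N}$, $s_{i}:=g_{i}|_{U}$, $l:=k$, and let $\mathscr{R}:=\ker(\sigma|_{U})$ be the relation sheaf of $s_{1},\dots,s_{l}$; as the kernel of a morphism of coherent sheaves $\mathscr{R}$ is again coherent, and there is a short exact sequence of $\mathscr{A}|_{U}$-modules
\[
0\longrightarrow\mathscr{R}\longrightarrow\mathscr{A}^{l}|_{U}\longrightarrow\mathscr{G}|_{U}\longrightarrow 0,
\]
the middle arrow being $\sigma$. For any open $V\subseteq U$ the set $V$ is again a paracompact real analytic manifold, so Cartan's Theorem B \cite{Cartan} gives $H^{1}(V;\mathscr{R})=0$; the associated long exact cohomology sequence then shows that the map $C^{\omega}(V)^{l}=\Gamma(V;\mathscr{A}^{l})\to\Gamma(V;\mathscr{G})=\Gamma^{\omega}(V;G)$ induced by $\sigma$ is surjective, i.e.\ every $t\in\Gamma^{\omega}(V;G)$ has the form $t=\sum_{i=1}^{l}c_{i}\,(s_{i}|V)$ with $c_{i}\in C^{\omega}(V)$. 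Thus $s_{1}|V,\dots,s_{l}|V$ generate the $C^{\omega}(V)$-module $\Gamma^{\omega}(V;G)$ for every open $V\subseteq U$, as required. The only subtlety beyond coherence is the use of Cartan's theorems on the possibly non-compact $V$, which is legitimate because they hold on every paracompact real analytic manifold --- one reduces, via a Stein complexification (Grauert) and Cartan's Theorems on Stein manifolds, to the complex case.
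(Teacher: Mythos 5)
The paper does not prove Proposition \ref{pro:2.13} itself (it is quoted from \cite{Lewis}), so your attempt can only be judged on its own terms. The second half of your argument --- the short exact sequence $0\to\mathscr{R}\to\mathscr{A}^{l}|_U\to\mathscr{G}|_U\to 0$ with $\mathscr{R}$ coherent, followed by Cartan's Theorem B on each open $V\subseteq U$ to get surjectivity on sections --- is correct and is exactly the right globalization mechanism. The gap is in the first half: the claim that $\mathscr{G}|_{\mathcal{N}}=\mathscr{M}$, i.e.\ that the pointwise generators $g_1,\dots,g_k$ of Theorem \ref{tm:2.3}(3) generate the sheaf of sections, is false, and consequently your choice $s_i:=g_i|_U$ does not prove the proposition. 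The paper's own second example in Section \ref{sec:2} is a counterexample: on $M=\mathbb{R}$ the distribution with $G_0=\{0\}$ and $G_p=T_pM$ otherwise has pointwise generator $g=x^2\partial_x$, yet $h=x\partial_x$ is a real analytic section of $G$ that is not a $C^{\omega}(V)$-multiple of $g$ on any neighbourhood $V$ of $0$; here $\mathscr{M}_0=x^2\mathscr{A}_0\partial_x\subsetneq x\mathscr{A}_0\partial_x=\mathscr{G}_0$.

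The precise point where your argument for $\mathscr{G}\subseteq\mathscr{M}$ breaks is the step ``vanishing fibre values along $M$ plus the identity principle imply the section of $\mathscr{O}^n/\widetilde{\mathscr{M}}$ vanishes.'' The fibre value of a section $t$ of a coherent sheaf $\mathscr{F}$ at $q$ lives in $\mathscr{F}_q/\mathfrak{m}_q\mathscr{F}_q$, and a section can have identically vanishing fibre values without being the zero section: in the example above, the image of $x$ in $\mathscr{A}/x^2\mathscr{A}$ has zero fibre value at every point but nonzero germ at $0$. The identity principle controls where the \emph{germ} vanishes (the support of a section of a coherent sheaf is analytic), not where the fibre value vanishes, and the pointwise condition $s(q)\in\mathrm{span}_{\mathbb{R}}\{g_i(q)\}$ only gives you vanishing fibre values. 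The correct route --- the one the paper implicitly relies on via \cite{Lewis} --- is to invoke the coherence of the full sheaf of sections $\mathscr{G}^{\omega}_G$ (Proposition \ref{prop:2.1}, itself a nontrivial theorem), take $s_1,\dots,s_l$ to be local sections whose \emph{germs} generate the stalks $\mathscr{G}^{\omega}_{G,q}$ for all $q$ in some $U$ (these exist by local finite generation and, via Cartan's Theorem A, may even be taken as restrictions of global sections; they are in general not the pointwise generators $g_i$), and only then run your Cartan B argument, which goes through verbatim.
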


One of the complications ensuing from the notion of a generalized distribution arises if the dimensions of the subspaces
$G_p, p \in M$,  are not locally constant. The following definition associates some language with
this.
\begin{definition}
Let $M$ be a $C^r$-manifold and $p$ be point in $M$. Let $G$ be a $C^r$-generalized
distribution of $M$.  $p$ is a regular point for $G$ if there exists a neighbourhood $U$  of $p$ such that the dimension of the linear subspace $G_q$ is equal to that of $G_{p}$,  for every $q\in U$.  $p$ is a singular point for $G$ if it is not a regular point for $G$.

A generalized distribution $G$ is regular if every point in $M$ is a regular point for $G$, and is singular otherwise.
\end{definition}

Let $p$ be a regular point for the $C^r$-generalized distribution $G$ with $\text{rank}(G_p)=k$. From (\ref{eq:2.3}) we know that there exist local $C^r$-sections $g_1, \cdots, g_k$ of $G$ such that $G_p=\text{span}_{\mathbb{R}}\{g_1(p), \cdots, g_k(p)\}$. Since $p$ is a regular point, it follows that $G_q=\text{span}_{\mathbb{R}}\{g_1(q), \cdots, g_k(q)\}$ for $q \in U$, where $U$ is any sufficiently small  neighborhood of $p$. That is, $G$ is finitely generated over $U$. Besides, let $h_1, \cdots, h_l$ be any local generators of $G$ over $U$.  $\Gamma^r(U;G)$ is a module over $C^r(U)$ generated by $h_1, \cdots, h_l$. That is, we can identify a $C^r$-distribution, the module spanned by
any $C^r$-local generators, and the space of its local $C^r$-sections around a regular point.

If $p$ is a singular point for a $C^r$-generalized distribution $G$, the following two examples show that: (1) If $r=\infty$, there exists
arbitrarily small neighborhood $U$ of $p$ such that $\Gamma^{\infty}(U; G)$ is not finitely generated.
(2) If $r=\omega$, there exist local generators of $G$ which can not span the module $\Gamma^{\omega}(U;G)$, where $U$ is any  local neighborhood of $p$.
\begin{example}\cite{Drager}
Let $M=\mathbb{R}$ and $TM=\mathbb{R}\times \mathbb{R}$. Consider the generalized distribution
$G$ defined by
\begin{equation}
G_p=\begin{cases}
\{p\}\times \mathbb{R}, &p \textgreater 0, \\
\{p\}\times \{0\}, &p\leq 0. \nonumber
\end{cases}
\end{equation}
Consider the smooth section $g $ of $G$ defined by
\begin{equation}
g_p=\begin{cases}
(p,e^{-1/p}), &p \textgreater 0, \\
(p,0), &p\leq 0. \nonumber
\end{cases}
\end{equation}
It follows that $G$ is a smooth distribution of $M$ since $g$ is a global generator for $G$. However, \cite{Drager}
shows that $\Gamma^{\infty}(U; G)$ is not finitely generated, for any neighborhood $U=(-a, a), a \textgreater 0$ of the singular point $0$ for $G$.
\end{example}
\begin{example}\cite{Lewis}
Let $M=\mathbb{R}$ and $TM=\mathbb{R}\times \mathbb{R}$. Consider the $C^{\omega}$-generalized distribution
$G$ defined by
\begin{equation}
G_p=\begin{cases}
\{p\}\times \mathbb{R}, &p \neq 0, \\
\{p\}\times \{0\}, &p=0. \nonumber
\end{cases}
\end{equation}
It has a global generator  $g(p)=(p, p^2)$. However, consider the $C^{\omega}$-section
$h(p)=(p, p)$ of $G$. It is obvious that $h|U$ is not  a $C^{\omega}(U)$-multiple of $g|U$, where $U\subseteq M$ is any open subset containing the singular point $0$ of $G$.
\end{example}

\section{Local controlled invariance}
\label{sec:3}
We have the following result for local controlled invariance.
\begin{theorem}\label{tm:3.5}
Consider the $C^r$-affine control system \eqref{eq:1.1} on $M$.  Let $D$ be an involutive regular  $C^r$-distribution of $M$. $D$ is locally controlled invariant for $\Sigma$ at $p \in M$, if and only if there exists a sufficiently small neighborhood $U$ of $p$, such that
\begin{subequations}
\begin{align}
&[g_i,\Gamma^r(U; D)]\subseteq \Gamma^r(U;D)+\text{span}_{C^{r}(U)}\{g_1, \cdots, g_m\}, i=1, \cdots, m. \label{eq:3.9a}\\
&[f, \Gamma^r(U;D)] \subseteq \Gamma^r(U;D)+\text{span}_{C^{r}(U)}\{g_1, \cdots, g_m\}, \label{eq:3.9b}
\end{align}
\end{subequations}
where $\text{span}_{C^r(U)}\{g_1, \cdots, g_m\}$ denotes the $C^r(U)$-module spanned by $g_1, \cdots, g_m$.
\end{theorem}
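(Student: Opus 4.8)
The plan is to reduce, by the Frobenius theorem, to a module-theoretic statement about the quotient bundle $TM/D$, and then to produce the feedback by transporting generators along the leaves of $D$ using linear ordinary differential equations. Necessity is immediate from the algebra already indicated in \eqref{eq:1.8a}--\eqref{eq:1.8b}: if $(\alpha,\beta)$ realize \eqref{eq:1.3a}--\eqref{eq:1.3b} on $U$, then, $\beta$ being $GL(m,\mathbb{R})$-valued, it is invertible over $C^r(U)$, so $\mathrm{span}_{C^r(U)}\{\hat g_1,\dots,\hat g_m\}=\mathrm{span}_{C^r(U)}\{g_1,\dots,g_m\}$; writing $g_j=\sum_i(\beta^{-1})_{ji}\hat g_i$ and $f=\hat f-\sum_i\alpha_i g_i$ and expanding the brackets $[g_j,v]$ and $[f,v]$ for $v\in\Gamma^r(U;D)$ by means of $[hX,v]=h[X,v]-(vh)X$, every term produced lies either in $[\hat f,\Gamma^r(U;D)]+\sum_i[\hat g_i,\Gamma^r(U;D)]\subseteq\Gamma^r(U;D)$ or in $\mathrm{span}_{C^r(U)}\{g_1,\dots,g_m\}$. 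This gives \eqref{eq:3.9a}--\eqref{eq:3.9b} on $U$ itself, and hence on every smaller neighbourhood.

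For sufficiency, shrink $U$ to a coordinate cube $V\times W$ carrying Frobenius coordinates $(y,z)=(y^1,\dots,y^d,z^1,\dots,z^{n-d})$ with $D=\mathrm{span}\{\partial_{y^1},\dots,\partial_{y^d}\}$ and $p\leftrightarrow(0,0)$; then $\Gamma^r(U;D)$ is the free $C^r(U)$-module on $\partial_{y^1},\dots,\partial_{y^d}$, the bundle $TM/D$ is trivialized by the classes $\bar\partial_{z^1},\dots,\bar\partial_{z^{n-d}}$, and every vector field $X$ has a well-defined class $\bar X\in\Gamma^r(U;TM/D)\cong C^r(U)^{n-d}$. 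A direct computation gives $\overline{[X,\partial_{y^j}]}=-\partial_{y^j}\bar X$, so $[X,\Gamma^r(U;D)]\subseteq\Gamma^r(U;D)$ holds if and only if the components of $\bar X$ are independent of $y$; call such a class \emph{basic}. Taking $X=g_i$ and $X=f$, conditions \eqref{eq:3.9a}--\eqref{eq:3.9b} are then equivalent to the statement that the finitely generated submodule $N:=\mathrm{span}_{C^r(U)}\{\bar g_1,\dots,\bar g_m\}\subseteq C^r(U)^{n-d}$ satisfies $\partial_{y^j}N\subseteq N$ and $\partial_{y^j}\bar f\in N$ for $j=1,\dots,d$. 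What must be constructed is $C^r$ feedback $(\alpha,\beta)$ on a possibly smaller neighbourhood for which $\bar{\hat g}_i=\sum_j\beta_{ij}\bar g_j$ and $\bar{\hat f}=\bar f+\sum_i\alpha_i\bar g_i$ are basic.

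The heart of the proof is a leafwise transport that uses only uniqueness of solutions of linear ODEs. Fix $j$ and write $\partial_{y^j}\bar g_i=\sum_k A^{(j)}_{ik}\bar g_k$ with $A^{(j)}_{ik}\in C^r(U)$ (any choice; no uniqueness of the coefficients is needed). Along each orbit of $\partial_{y^j}$ the $\mathbb{R}^{n-d}$-valued curves $t\mapsto\bar g_i(y+te_j,z)$ solve the linear system $\dot u_i=\sum_k A^{(j)}_{ik}(y+te_j,z)u_k$, so by uniqueness $\bar g_i(y+te_j,z)=\sum_k M^{(j)}_{ik}(t;y,z)\,\bar g_k(y,z)$, where $M^{(j)}(t;\cdot)$ is the fundamental matrix solution with $M^{(j)}(0)=I$; it is $GL(m,\mathbb{R})$-valued and $C^r$ jointly in $t$ and the base point. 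Concatenating these identities along the broken path from $(0,z)$ to $(y,z)$ built from the $d$ coordinate flows yields an invertible $Q(y,z)\in GL(m,\mathbb{R})$, depending $C^r$ on $(y,z)$, with $Q(0,\cdot)=I$ and $\bar g_i(y,z)=\sum_k Q_{ik}(y,z)\,\bar g_k(0,z)$; setting $\beta:=Q^{-1}$ makes $\bar{\hat g}_i=\bar g_i(0,z)$, which is basic. Running the same scheme on $\bar f$ — write $\partial_{y^j}\bar f=\sum_i B^{(j)}_i\bar g_i$ and integrate along the orbits of $\partial_{y^j}$, using the expressions just obtained for the $\bar g_i$ — produces functions $S_l\in C^r(U)$ with $S_l(0,\cdot)=0$ and $\bar f(y,z)=\bar f(0,z)+\sum_l S_l(y,z)\,\bar g_l(0,z)$; substituting $\bar g_l(0,z)=\sum_k\beta_{lk}\bar g_k$ and setting $\alpha_k:=-\sum_l S_l\beta_{lk}$ makes $\bar{\hat f}=\bar f(0,z)$ basic. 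Hence $(\alpha,\beta)$ is the desired $C^r$ feedback.

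The step I expect to be the real obstacle is the absence of any regularity hypothesis on $G$: $N$ need not be a subbundle and $G+D$ need not have locally constant rank, so the naive ``gauge transformation trivializing a flat connection'' argument is not available, because the connection $\sum_j A^{(j)}\,dy^j$ implicit above is in general not flat. Carrying out the transport one leaf-direction at a time, through ODE uniqueness rather than through a joint fundamental solution on the leaves, is precisely what circumvents this. What then remains is bookkeeping: that all transport matrices stay in $GL(m,\mathbb{R})$ (being products of fundamental matrices of linear ODEs), that the resulting $\alpha$ and $\beta$ are of class $C^r$ (smooth, respectively real-analytic, dependence of ODE solutions and of the auxiliary integrals on parameters), and — in the $C^\omega$ case — that shrinking $U$ so that these solutions and integrals are defined does no harm, since the conclusion is local at $p$.
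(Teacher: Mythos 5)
Your proposal is correct and follows essentially the same route as the paper: your coordinate trivialization of $TM/D$ together with leafwise transport by fundamental matrices of linear ODEs is exactly the paper's flat connection $\nabla$ and its parallel sections $\bar Z_i$ (which in Frobenius coordinates are precisely your $\bar g_i(0,z)$, cf.\ \eqref{eq:3.2}), and your transport identity $\bar g_i(y,z)=\sum_k Q_{ik}(y,z)\bar g_k(0,z)$ obtained by induction over the leaf directions is the content of Proposition \ref{pro:3.3}. The only noteworthy divergences are minor simplifications: in the drift step your direct path integral for $\bar f(y,z)-\bar f(0,z)$ along the broken path sidesteps the curl identity \eqref{eq:2.19} that the paper needs in order to verify its iterated-integral primitive, and your $Q$ is invertible on all of $U$ by construction, whereas the paper must shrink $U$ to invert its matrix $A$.
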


We will propose a new proof based on the notion
of connection. Actually, an involutive regular distribution $D$ of $M$ defines a connection on $TM/D$ as claimed
in \cite{1}. Consider the  vector bundle $\rho: TM/D\rightarrow M$. Let $\pi: TM\rightarrow TM/D$ be the projection map. Define a map $\nabla$ as follows:
\begin{eqnarray}\label{eq:2.1}
\nabla:
&\Gamma^r(D)\times\Gamma^r(TM/D)\rightarrow \Gamma^r(TM/D)\nonumber\\
&\nabla_{X}\bar Y(p)=\pi([X, Y])(p)
\end{eqnarray}
for each $p \in M$, where $Y$ is a $C^r$-vector field on a neighborhood of $p$ in $M$ which projects to  $\bar Y$. Since $TM/D$ has constant rank, then the projections of any local generators for $TM$ span the module of local section of $TM/D$. Hence the existence of  $Y$ is ensured.
\begin{proposition}\label{pro:2.12}
The map $\nabla$ defined by \eqref{eq:2.1} is well-defined and satisfies the following conditions:\\
1. $(X, \bar Y)$ is  $\mathbb{R}$-bilinear.\\
2. $\nabla _{fX} \bar Y = f\nabla _X \bar Y$ for $f \in C^r  (M)$.\\
3. $\nabla _X (f\bar Y) = f\nabla _X \bar Y + (L_X f)\bar Y$ for $f \in C^r(M)$.
\end{proposition}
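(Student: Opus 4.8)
The plan is to verify the three listed properties in turn, the crux being well-definedness, which is precisely where involutivity of $D$ is used. First I would show that $\nabla_X\bar Y(p):=\pi([X,Y])(p)$ is independent of the choice of the lift $Y$. Given two $C^r$-vector fields $Y_1,Y_2$ defined on a neighborhood of $p$ with $\pi(Y_1)=\pi(Y_2)=\bar Y$ there, their difference $Z:=Y_1-Y_2$ is a local $C^r$-section of $D$; since $X\in\Gamma^r(D)$ as well and $D$ is involutive, $[X,Z]$ is again a local section of $D$, so $\pi([X,Z])=0$ and hence $\pi([X,Y_1])(p)=\pi([X,Y_2])(p)$. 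Because the Lie bracket is a local operator, the right-hand side of \eqref{eq:2.1} depends only on the germ of $Y$ at $p$, so $\nabla$ is a well-defined map on $\Gamma^r(D)\times\Gamma^r(TM/D)$; existence of a lift $Y$ is guaranteed, as already noted in the text, by the constant rank of $TM/D$.

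Property 1, the $\mathbb{R}$-bilinearity of $(X,\bar Y)\mapsto\nabla_X\bar Y$, is then immediate from the $\mathbb{R}$-bilinearity of the Lie bracket together with the $\mathbb{R}$-linearity of the projection $\pi$.

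For property 2 I would invoke the identity $[fX,Y]=f[X,Y]-(L_Y f)X$ valid for $f\in C^r(M)$. Applying $\pi$ and evaluating at $p$: since $X(p)\in D_p$ we have $\pi(X)(p)=0$, so the term $\pi\bigl((L_Y f)X\bigr)(p)$ drops out and we obtain $\nabla_{fX}\bar Y(p)=f(p)\,\pi([X,Y])(p)=(f\nabla_X\bar Y)(p)$. As a byproduct this shows that $\nabla_X\bar Y(p)$ depends only on the value $X(p)$, not on the germ of $X$, which is what makes $\nabla$ a genuine connection. For property 3 I would lift $f\bar Y$ by $fY$, noting $\pi(fY)=f\pi(Y)=f\bar Y$, and use $[X,fY]=f[X,Y]+(L_X f)Y$. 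Applying $\pi$ and evaluating at $p$ yields $\nabla_X(f\bar Y)(p)=f(p)\,\pi([X,Y])(p)+(L_X f)(p)\,\pi(Y)(p)=\bigl(f\nabla_X\bar Y+(L_X f)\bar Y\bigr)(p)$.

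The main obstacle is really just the well-definedness step, i.e.\ recognizing that involutivity of $D$ is exactly what is needed to kill $\pi([X,Z])$ for $Z\in\Gamma^r(D)$; the remaining verifications are routine manipulations with the two Leibniz identities for the Lie bracket. One point worth flagging is that nothing in this argument uses partitions of unity or bump functions, so the proof is uniform in $r\in\{\infty,\omega\}$ and applies verbatim in the real analytic case.
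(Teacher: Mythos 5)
Your proof is correct and follows essentially the same route as the paper: well-definedness via $[X,Y_1-Y_2]\in[X,\Gamma^r(U;D)]\subseteq\Gamma^r(U;D)$ so that $\pi$ kills the difference, with the three connection axioms then following from the Leibniz identities for the Lie bracket. The paper in fact omits the verification of properties 1--3 as ``obvious,'' so your write-up simply supplies the details it leaves out.
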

\begin{proof}
Let $p \in M$ and $Y_1, Y_2$ be two  $C^r$-sections defined on a local neighborhood $U$ of $p$ such that $\bar Y|U=\pi(Y_1)=\pi(Y_2)$. It follows that
\begin{equation}
[X, Y_1-Y_2] \in [X, \Gamma^r(U;D)] \subseteq \Gamma^r(U; D),\nonumber
\end{equation}
for $X\in \Gamma^r(U;D)$. That is,
\begin{equation}\label{eq:2.2}
\pi([X,Y_1])=\pi([X,Y_2]).
\end{equation}
Hence $\nabla$ is well-defined. The proof for  the three conditions is obvious, so we omit it. \qed
\end{proof}

Then  from \cite{Bott,Iliev} we know that the map $\nabla$ is a connection on $TM/D$.  Let $c:[0,1] \rightarrow M$ be a $C^{1}$ curve such that $\dot c(t) \in D_{c(t)}$ for each $t \in [0,1]$. Recall that $\sigma: [0,1] \rightarrow TM/D$ is said to be a parallel translation along $c$, if $\nabla_{\dot c(t)}\sigma(t)=0$ for all $t \in [0,1]$. Since $\nabla$ satisfies that $\nabla_{X_1}\nabla_{X_2}\bar Y-\nabla_{X_2}\nabla_{X_1}\bar Y-\nabla_{[X_1, X_2]}\bar Y=\pi([X_1, [X_2, Y]]-[X_2,[X_1, Y]]-[[X_1, X_2], Y])=0$, $\nabla$ is flat. Then from \cite{Bott,Iliev} we know that for each $ p \in M$, there exists a neighborhood on the leaf of foliation of $D$ through $p$, such that when restricted to this neighborhood, the parallel translation depends only on $c(0)$ and $c(1)$.

Let $G$ be a $C^r$-generalized distribution of $M$. From local coordinates representation of $\pi$, we know  that $\pi(G+D)$ is a $C^r$-generalized subbundle of $TM/D$. Let $U$ be a local neighborhood of $p$ and $\phi: U \rightarrow \mathbb{R}^n$ be a local chart about $p$ satisfying that $\phi(p)=0$ and $D=\text{span}_{\mathbb{R}}\{\frac{\partial}{\partial {x_1}}, \cdots, \frac{\partial}{\partial {x_k}}\}$. Let $g_1, \cdots, g_m$ be local generators for $G$ on  a neighborhood $U$ of $p$.
We define the sections $\bar Z_i: U \rightarrow TM/D$ as follows:

For any $q \in U$ with $\phi(q)=(q_1,  \cdots, q_n)$, consider  the parallel translation
\begin{equation}\label{eq:2.7}
\sigma_i: [0,1] \rightarrow  TM/D
\end{equation}
of
\begin{equation}\label{eq:2.8}
\sigma_i(0)=\pi(g_i(\phi^{-1}(0, \cdots, 0, q_{k+1}, \cdots, q_n)))
\end{equation}
along any curve $c: [0,1] \rightarrow U$ with $\dot c(t) \in D_{c(t)}$ such that
\begin{equation}\label{eq:2.9}
\phi(c(0))=(0,\cdots, 0, q_{k+1}, \cdots, q_n)
\end{equation}
and $c(1)=q$. We define  $\bar Z_i(q)=\sigma_i(1), i=1, \cdots, m$.

Let $h_i$ be defined by
\begin{equation}\label{eq:3.2}
h_i(q)=\sum_{j=k+1}^{n}g_i^{j}(0, \cdots, 0, q_{k+1}, \cdots, q_n)\frac{\partial}{\partial x^{j}},
\end{equation}
for $q \in U,  i=1, \cdots, m$. Direct computation yields that $\bar Z_i(q)=\pi(h_i(q)),  i=1, \cdots, m$.  Hence $\bar Z_i$ are $C^r$-sections of $TM/D, i=1, \cdots,m$.
\begin{proposition}\label{pro:3.3}
Assume that $g_1, \cdots, g_m$ satisfy that
\begin{equation}\label{eq:3.7}
[g_i,\Gamma^r(U; D)]\subseteq \Gamma^r(U;D)+\text{span}_{C^r(U)}\{g_1, \cdots, g_m\}, i=1, \cdots, m.
\end{equation}
Then we have \[\pi(g_i) \in \text{span}_{C^{r}(U)}\{\bar Z_1, \cdots, \bar Z_m\}, i=1, \cdots, m,\]
where $\text{span}_{C^r(U)}\{\bar Z_1, \cdots, \bar Z_m\}$ denotes the $C^r(U)$-module spanned by $\bar Z_1, \cdots, \bar Z_m$.
\end{proposition}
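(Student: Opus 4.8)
The plan is to run the whole argument inside the adapted chart $\phi$, to recognise the sections $\bar Z_i$ of \eqref{eq:3.2} as $\nabla$-parallel transports of $\pi(g_i)$, and thereby to reduce the asserted module membership to solving a single linear ODE with parameters. First I would record the local picture. Writing $\partial_j:=\partial/\partial x^j$, we have $D|_U=\text{span}_{\mathbb{R}}\{\partial_1,\dots,\partial_k\}$, and the sections $e_j:=\pi(\partial_j)$, $j=k+1,\dots,n$, form a frame for $TM/D$ over $U$ that is $\nabla$-parallel along the leaves of $D$, since $\nabla_{\partial_a}e_j=\pi([\partial_a,\partial_j])=0$ for $a\le k$. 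Expanding $g_i=\sum_{j=1}^n g_i^j\,\partial_j$ gives $\pi(g_i)=\sum_{j>k}g_i^j\,e_j$, while by \eqref{eq:3.2} the section $\bar Z_i$ has $e_j$-components $g_i^j(0,\dots,0,q_{k+1},\dots,q_n)$; equivalently, $\bar Z_i(q)$ is the parallel transport of $\pi(g_i)$ from the transversal point $\phi^{-1}(0,\dots,0,q_{k+1},\dots,q_n)$ to $q$ along an integral curve of $D$, consistent with \eqref{eq:2.7}--\eqref{eq:2.9}.

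Next I would extract the consequence of the hypothesis. Since $\partial_a\in\Gamma^r(U;D)$ for $a\le k$, \eqref{eq:3.7} gives $[g_i,\partial_a]\in\Gamma^r(U;D)+\text{span}_{C^r(U)}\{g_1,\dots,g_m\}$; applying $\pi$ produces $C^r$ functions $c^l_{ia}$ on $U$ with $\nabla_{\partial_a}\pi(g_i)=\sum_{l=1}^m c^l_{ia}\,\pi(g_l)$, i.e. $\partial_a g_i^j=\sum_l c^l_{ia}\,g_l^j$ for all $j>k$ and $a\le k$. Then, fixing $q\in U$ with coordinates $(x_1,\dots,x_n)$, I would integrate along the integral ray $c_q(t):=\phi^{-1}(tx_1,\dots,tx_k,x_{k+1},\dots,x_n)$, $t\in[0,1]$, of $D$ running from the transversal to $q$ (such a ray lies in $U$ — this is already implicit in the definition of $\bar Z_i$ — and by flatness of $\nabla$ the transport along it computes $\bar Z_i$). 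Along $c_q$, for each fixed $j>k$ the row vector $\Psi^j(t):=(g_1^j(c_q(t)),\dots,g_m^j(c_q(t)))$ solves $\dot\Psi^j=\Psi^j\,C(t;q)$ with $C(t;q)_{li}:=\sum_{a\le k}x_a\,c^l_{ia}(c_q(t))$, a matrix independent of $j$; hence $\Psi^j(1)=\Psi^j(0)\,P(q)$, where $P(q)$ is the time-one value of the fundamental solution of $\dot\Phi=\Phi\,C(t;q)$, $\Phi(0)=I_m$. Unwinding components, $g_i^j(q)=\sum_l g_l^j(0,\dots,0,x_{k+1},\dots,x_n)\,P(q)_{li}$ for $j>k$, which by \eqref{eq:3.2} is exactly $\pi(g_i)=\sum_{l=1}^m P(\cdot)_{li}\,\bar Z_l$.

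To close the argument I would invoke $C^r$-dependence of solutions of ODEs on parameters: $C(t;q)$ is $C^r$ jointly in $(t,q)$, because the $c^l_{ia}$ are $C^r$ on $U$ and $c_q(t)$ is polynomial in $(t,x)$, so the time-one map $q\mapsto P(q)_{li}$ is $C^r$ on $U$ — in the case $r=\omega$ this is the analytic dependence of solutions of analytic ODEs on parameters and initial data. Hence $P(\cdot)_{li}\in C^r(U)$ and $\pi(g_i)\in\text{span}_{C^r(U)}\{\bar Z_1,\dots,\bar Z_m\}$, as required.

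The step I expect to be the main obstacle is precisely the decision to integrate along single rays rather than over the whole leaf-slice. The naive multi-time system $\partial_a\Phi=\Phi\,C_a$ with $(C_a)_{li}=c^l_{ia}$ need not be integrable: its compatibility is a zero-curvature identity $\partial_a C_b-\partial_b C_a+[C_a,C_b]=0$ that \eqref{eq:3.7} does not supply, and the $c^l_{ia}$ are not even canonically defined where the $\pi(g_l)$ fail to be independent. Passing to a single ray collapses this to an honest one-parameter linear ODE, for which existence, uniqueness and smooth (resp. analytic) dependence on the parameter $q$ are classical; and the representation $\pi(g_i)=\sum_l P(\cdot)_{li}\bar Z_l$ nonetheless holds on all of $U$ because $\bar Z_l$ is itself manufactured by parallel transport along exactly these rays, the flatness of $\nabla$ recorded above making the choice of ray irrelevant. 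Everything else is routine bookkeeping in the adapted chart.
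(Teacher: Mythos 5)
Your proposal is correct, and its core mechanism is the same as the paper's: identify $\bar Z_i$ as the $\nabla$-parallel transport of $\pi(g_i)$ off the transversal slice, convert the hypothesis \eqref{eq:3.7} into a linear relation $\nabla_{\partial_a}\pi(g_i)=\sum_l c^l_{ia}\pi(g_l)$, and recover the coefficients expressing $\pi(g_i)$ in the parallel frame $\bar Z_1,\dots,\bar Z_m$ by solving a linear ODE with $C^r$ (resp.\ $C^\omega$) dependence on parameters. The one genuine difference is organizational: the paper runs an induction over the $k$ leaf coordinates, freezing all but one at each stage and solving a one-variable linear system whose initial data are the coefficients $\hat\eta_{ij}$ produced at the previous stage, whereas you integrate once along the radial leaf-segment $t\mapsto(tx_1,\dots,tx_k,x_{k+1},\dots,x_n)$ and read off the coefficients as the time-one fundamental matrix $P(q)$. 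Your version is cleaner -- it dispenses with the $\hat\eta$, $\hat\xi$, $\sigma_{jr}$ bookkeeping and with the comparison of two solutions of the same ODE, replacing it by the standard fundamental-solution formula $\Psi^j(1)=\Psi^j(0)P(q)$ -- at the mild cost of requiring $\phi(U)$ to be star-shaped in the leaf directions (a cube chart, which both arguments implicitly assume anyway, since the paper's intermediate slices must also lie in $U$). You also correctly identify, and correctly dispose of, the one real trap: the multi-time system $\partial_a\Phi=\Phi C_a$ has no reason to be integrable because the $c^l_{ia}$ are neither canonical nor curvature-compatible, so one must reduce to genuine one-parameter ODEs, which is exactly what both the radial ray and the paper's coordinate-by-coordinate induction accomplish.
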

\begin{proof}
From \eqref{eq:3.7} we know that there exist functions $\gamma^l_{ij} \in C^r(U)$ such that
\begin{equation}\label{eq:3.8}
\nabla_{\frac{\partial}{\partial x_i}}{\pi(g_j)}=\sum_{l=1}^m \gamma^l_{ij}\pi(g_l), i=1,\cdots,k, j=1, \cdots,m.
\end{equation}
Let $\hat \zeta_{ii}(0,\cdots,0,q_{k+1},\cdots,q_n)=1$, $\hat \zeta_{ij}(0,\cdots,0,q_{k+1},\cdots,q_n)=0, j\neq i, i,j=1,\cdots,m.$ Then it follows from the definition of $\bar Z_j$ that
\begin{align*}
&\sum_{j=1}^m \hat \zeta_{ij}(0,\cdots,0,q_{k+1},\cdots,q_n)\bar Z_j(0,\cdots,0,q_{k+1},\cdots,q_n)\\
=&\pi(g_i)(0,\cdots,0,q_{k+1},\cdots,q_n).
\end{align*}
Assume that there exist $\hat \eta_{ij}(q_1, \cdots, q_l,0,\cdots,0,q_{k+1}, \cdots, q_n)$ satisfying that
\begin{eqnarray}\label{eq:3.3}
\sum_{j=1}^m &&\hat \eta_{ij}(q_1, \cdots, q_l,0,\cdots,0,q_{k+1}, \cdots, q_n)\bar Z_j(q_1, \cdots, q_l,0,\cdots,0,q_{k+1}, \cdots, \nonumber\\
&&q_n)=\pi(g_i)(q_1, \cdots, q_l,0,\cdots,0,q_{k+1}, \cdots, q_n).
\end{eqnarray}
We claim there exist $\hat \xi_{ij}(q_1, \cdots, q_l,q_{l+1},0,\cdots,0,q_{k+1}, \cdots, q_n)$ satisfying that
\begin{align*}
\sum_{j=1}^m &\hat \xi_{ij}(q_1, \cdots, q_l,q_{l+1},0,\cdots,0,q_{k+1}, \cdots, q_n)\bar Z_j(q_1, \cdots, q_l,q_{l+1},0,\cdots,0, \nonumber\\
&q_{k+1}, \cdots, q_n)=\pi(g_i)(q_1, \cdots, q_l,q_{l+1}, 0,\cdots,0,q_{k+1}, \cdots, q_n).
\end{align*}
Consider the curve $c_{(q_1,\cdots, q_l,q_{k+1},\cdots, q_n)}:[-1,1] \rightarrow M$ with
\begin{equation}\label{eq:27}
\phi(c_{(q_1,\cdots, q_l,q_{k+1},\cdots, q_n)}(0))=(q_1,\cdots,q_l,0, \cdots, 0, q_{k+1},\cdots,q_n)
\end{equation}
and
\begin{equation}\label{eq:26}
\dot c_{(q_1,\cdots, q_l,q_{k+1},\cdots, q_n)}(t)=\frac{\partial}{\partial x_{l+1}}.
\end{equation}
$c_{(q_1,\cdots, q_l,q_{k+1},\cdots, q_n)}(t)$ lies in the leaf of $\rho(K)$ passing $c_{(q_1,\cdots, q_l,q_{k+1},\cdots, q_n)}(0)$.

Let $\bar Y_j=\sum_{i=1}^m\xi_{ji}\bar Z_i, j=1, \cdots, m$. We have
\begin{eqnarray}
&&\nabla_{\dot {c}_{(q_1,\cdots, q_l,q_{k+1},\cdots, q_n)}(t)}\bar Y_j=\sum_{r=1}^m\dot{\xi}_{jr}(c_{(q_1,\cdots, q_l,q_{k+1},\cdots, q_n)}(t))\bar Z_r\nonumber\\
&&+\sum_{r=1}^m{\xi}_{jr}( {c}_{(q_1,\cdots, q_l,q_{k+1},\cdots, q_n)}(t))\nabla_{\dot {c}_{(q_1,\cdots, q_l,q_{k+1},\cdots, q_n)}(t)} \bar Z_r\nonumber\\
&&=\sum_{r=1}^m\dot{\xi}_{jr}(c_{(q_1,\cdots, q_l,q_{k+1},\cdots, q_n)}(t))\bar Z_r(c_{(q_1,\cdots, q_l,q_{k+1},\cdots, q_n)}(t)).
\end{eqnarray}
On the other hand,
\begin{equation}
\sum_{r=1}^m \gamma^r_{{l+1}j}\bar Y_r=\sum_{r=1}^m \gamma^r_{{l+1}j}\sum_{a=1}^m\xi_{ra}\bar Z_a=\sum_{r=1}^m\sum_{a=1}^m\gamma^r_{{l+1}j}\xi_{ra}\bar Z_a.
\end{equation}
Set \[\dot{\sigma}_{jr}(t)=\sum_{a=1}^m\gamma^a_{{l+1}j}(c_{(q_1,\cdots, q_l,q_{k+1},\cdots, q_n)}(t))\sigma_{ar}(t), j, r=1,\cdots,m.\]
Then we get $m$-system of linear ordinary differential equations with parameters $q_1,\cdots, q_l,q_{k+1},\cdots, q_n$ which has
unique solutions with prescribed initial value
\begin{equation}\label{eq:3.5}
\sigma_{jr}(0,q_1,\cdots, q_l,q_{k+1},\cdots, q_n)=\hat \eta_{jr}(q_1,\cdots, q_l, 0, \cdots, 0, q_{k+1},\cdots, q_n),
\end{equation}
for $\hat \eta_{jr}$ given in \eqref{eq:3.3} and $j,r=1, \cdots, m$.
It follows that
\begin{equation}\label{eq:3.4}
\nabla_{\dot {c}_{(q_1,\cdots, q_l,q_{k+1},\cdots, q_n)}(t)}{\sum_{r=1}^m \sigma_{jr}(t)\bar Z_r}=\sum_{r=1}^m \gamma^r_{{l+1}j}(c_{(q_1,\cdots, q_l,q_{k+1},\cdots, q_n)}(t))\sum_{a=1}^m \sigma_{ra}(t)\bar Z_a,
\end{equation}
for $j=1, \cdots, m$.

Since there exists unique solution for system of equations \[\nabla_{\dot {c}_{(q_1,\cdots, q_l,q_{k+1},\cdots, q_n)}(t)}{Y_i}=\sum_{r=1}^m \gamma^r_{{l+1}i}Y_r, i=1,\cdots,m,\] with prescribed initial value, it follows from \eqref{eq:3.8}, \eqref{eq:3.3}, \eqref{eq:3.5} and \eqref{eq:3.4} that
\begin{align*}
\sum_{r=1}^m &\sigma_{jr}(q_{l}, q_1,\cdots, q_{l+1},  q_{k+1}, \cdots, q_n)\bar Z_r(q_1,\cdots, q_{l+1}, 0, \cdots, 0, q_{k+1}, \\
&\cdots, q_n)=\pi(g_j)(q_1,\cdots, q_{l+1}, 0, \cdots, 0, q_{k+1}, \cdots, q_n).
\end{align*}
Let $\hat \xi_{ij}(q_1, \cdots, q_l,q_{l+1},0,\cdots,0,q_{k+1}, \cdots, q_n)=\sigma_{jr}(q_{l+1}, q_1,\cdots, q_{l},  q_{k+1}, \cdots, q_n)$. This completes the proof for the above claim. By induction, we know that
there exist $\hat \lambda_{ij} \in C^{r} (U), i,j =1, \cdots,m,$ such that
\begin{equation}
\sum_{j=1}^m \hat  \lambda_{ij}\bar Z_j=\pi(g_i),
\end{equation}
for $i=1, \cdots, m$. So the result follows immediately.\qed
\end{proof}

Proof of Theorem \ref{tm:3.5}.
The proof of the "only if" part is obvious according to Definition \ref{def:1.1} and \ref{def:1.2}. We only need to prove the "if" part.  It follows from \eqref{eq:3.9a} and Proposition \ref{pro:3.3}  that
\begin{equation}\label{eq:2.13}
(\pi(g_1) \cdots \pi(g_m))=(\bar Z_1 \cdots \bar Z_m)A,
\end{equation}
where $A=(A_{ij})_{m \times m}$ satisfies that $A_{ij}\in C^r(U), i,j=1, \cdots, m$. From the definition of $\bar Z_j$ we know that $A(p)=\text{Id}_{m \times m}$.
Then by shrinking $U$ to $V \subseteq U$ if necessary, we can ensure that  $A(q) \in GL(m, \mathbb{R})$ for each $q\in V$. Consider the vector fields $\hat g_1, \cdots, \hat g_m$ on $V$ defined by
\begin{equation}\label{eq:2.22}
(\hat g_1 \cdots \hat g_m)=(g_1 \cdots g_m)A^{-1}.
\end{equation}
It follows from \eqref{eq:2.13} that $\pi(\hat g_j)=\bar Z_j, j=1, \cdots, m$, which yield that
\[\pi([X,  \hat{g}_j])=\nabla_{X}\pi(\hat g_j)=\nabla_{X}\bar Z_j=0, j=1, \cdots, m,\]
for any $X \in \Gamma^r(V; D)$.
That is,
\begin{equation}\label{eq:2.23}
[\hat{g}_j, \Gamma^r(V;D)] \subseteq \Gamma^r(V;D),  j=1, \cdots, m.
\end{equation}
Hence \eqref{eq:1.3b} follows immediately from \eqref{eq:2.22} and \eqref{eq:2.23}, where $\beta=A^{-1}$.

It  follows from \eqref{eq:3.9b}  that
\[\nabla_{X}\pi(f) \in \text{span}_{C^{r}(U)}\{\pi(g_1), \cdots, \pi(g_m)\}\subseteq\text{span}_{C^{r}(U)}\{\bar Z_1, \cdots, \bar Z_m\},\]
for any $X \in \Gamma^r(U; D)$.
Hence there exist  $\alpha_{ij} \in C^r(U)$, such that
\begin{equation}\label{eq:2.14}
\nabla_{\frac{\partial}{\partial x_i}}\pi(f)=\sum_{j=1}^m \alpha_{ij}\bar Z_j,
\end{equation}
for $i=1, \cdots, k,  j=1, \cdots, m$.
We claim that there exists $\bar g=\sum_{j=1}^m \beta_j\bar Z_j$, such that
\begin{equation}\label{eq:2.15}
\nabla_{\frac{\partial}{\partial x_i}}\bar g=\sum_{j=1}^m \alpha_{ij}\bar Z_j, i=1, \cdots, k.
\end{equation}
Let
\begin{eqnarray}
\beta_j&=&\int_0^{x_k}\alpha_{kj}(x_1, \cdots, x_{k-1}, \tau, x_{k+1}, \cdots, x_n)d\tau+\int_0^{x_{k-1}}\alpha_{(k-1)j}(x_1,\cdots, x_{k-2},\nonumber\\
&&\tau,0, x_{k+1}, \cdots, x_n)d\tau+\cdots+\int_0^{x_1}\alpha_{1j}(\tau, 0, \cdots, 0, x_{k+1}, \cdots, x_n)d\tau,
\end{eqnarray}
for $j=1, \cdots, m$.

Since $\nabla$ satisfies that $\nabla_{\frac{\partial}{\partial x_i}}\nabla_{\frac{\partial}{\partial x_j}}\pi(f)-\nabla_{\frac{\partial}{\partial x_j}}\nabla_{\frac{\partial}{\partial x_i}}\pi(f)-\nabla_{[\frac{\partial}{\partial x_i}, \frac{\partial}{\partial x_j}]}\pi(f)=0$, we have
\begin{equation}\label{eq:2.19}
\sum_{l=1}^m\frac{\partial \alpha_{il}}{\partial x_j}\bar Z_l=\sum_{l=1}^m\frac{\partial \alpha_{jl}}{\partial x_i}\bar Z_l, i,j=1, \cdots, k.
\end{equation}
Hence for $i=1, \cdots, k$,
\begin{eqnarray}\label{eq:2.17}
&&\nabla_{\frac{\partial}{\partial x_i}}\bar  g=\nabla_{\frac{\partial}{\partial x_i}}\sum_{j=1}^m \beta_j\bar Z_j=\sum_{j=1}^m {\frac{\partial}{\partial x_i}}(\beta_j)\bar Z_j\nonumber\\
&=&\sum_{j=1}^m(\int_0^{x_k}{\frac{\partial}{\partial x_i}} \alpha_{kj}(x_1, \cdots, x_{k-1}, \tau, x_{k+1}, \cdots, x_n)d\tau+\int_0^{x_{k-1}}{\frac{\partial} {\partial {x_i}}} \alpha_{(k-1)j}\nonumber\\
&&(x_1, \cdots, x_{k-2},\tau,0, x_{k+1}, \cdots, x_n)d\tau+\cdots+\nonumber\\
&&\alpha_{ij}(x_1, \cdots, x_i, 0, \cdots, 0,, x_{k+1}, x_n))\bar Z_j.
\end{eqnarray}
From \eqref{eq:3.2} we know that in the local chart $(U, \phi)$, the section $\bar Z_j$ only depends on  $x_{k+1}, \cdots, x_n$, for $j=1, \cdots, m$. It follows that
\begin{eqnarray}
(35)&=&\int_0^{x_k}\sum_{j=1}^m{\frac{\partial \alpha_{kj}}{\partial x_i}}\bar Z_j (x_1, \cdots, x_{k-1}, \tau, x_{k+1}, \cdots, x_n)d\tau+\int_0^{x_{k-1}}\sum_{j=1}^m{\frac{\partial \alpha_{(k-1)j}}{\partial x_i}}\bar Z_j\nonumber\\
&&(x_1, \cdots, x_{k-2},\tau,0, x_{k+1}, \cdots, x_n)d\tau+\cdots+\sum_{j=1}^m\alpha_{ij}(x_1, \cdots, x_i, 0, \cdots, 0,, x_{k+1}, x_n)\bar Z_j\nonumber\\
&=&\int_0^{x_k}\sum_{j=1}^m{\frac{\partial \alpha_{ij}}{\partial x_k}}\bar Z_j (x_1, \cdots, x_{k-1}, \tau, x_{k+1}, \cdots, x_n)d\tau+\int_0^{x_{k-1}}\sum_{j=1}^m{\frac{\partial \alpha_{ij}}{\partial x_{k-1}}}\bar Z_j\nonumber\\
&&(x_1, \cdots, x_{k-2},\tau,0, x_{k+1}, \cdots, x_n)d\tau+\cdots+\sum_{j=1}^m\alpha_{ij}(x_1, \cdots, x_i, 0, \cdots, 0,, x_{k+1}, x_n)\bar Z_j\nonumber\\
&=&\sum_{j=1}^m(\int_0^{x_k}{\frac{\partial}{\partial \tau}} \alpha_{ij}(x_1, \cdots, x_{k-1}, \tau, x_{k+1}, \cdots, x_n)d\tau+\int_0^{x_{k-1}}{\frac{\partial} {\partial \tau}} \alpha_{ij}\nonumber\\
&&(x_1, \cdots, x_{k-2},\tau,0, x_{k+1}, \cdots, x_n)d\tau+\cdots+\alpha_{ij}(x_1, \cdots, x_i, 0, \cdots, 0,, x_{k+1}, x_n))\bar Z_j\nonumber\\
&=&\sum_{j=1}^m (\alpha_{ij}(x_1, \cdots, x_{k-1}, x_k, x_{k+1}, \cdots, x_n)-\alpha_{ij}(x_1, \cdots, x_{k-1}, 0, x_{k+1}, \cdots, x_n)\nonumber\\
&+&\alpha_{ij}(x_1, \cdots, x_{k-1}, 0, x_{k+1}, \cdots, x_n)-\alpha_{ij}(x_1, \cdots, x_{k-2}, 0, 0, x_{k+1}, \cdots, x_n)+\nonumber\\
&&\cdots+\alpha_{ij}(x_1, \cdots, x_i, 0, \cdots, 0,, x_{k+1}, x_n))\bar Z_j\nonumber\\
&=&\sum_{j=1}^m \alpha_{ij}(x_1, \cdots, x_{k-1}, x_k, x_{k+1}, \cdots, x_n)\bar Z_j,
\end{eqnarray}
where the second equality follows from \eqref{eq:2.19}. This completes the proof for the above claim.
From \eqref{eq:2.13} we know that, by shrinking $U$ if necessary, there exist $\hat \alpha_1, \cdots, \hat \alpha_m \in C^r(U)$, such that
$\bar g=\sum_{j=1}^m \hat \alpha_j\pi(g_j)$.

Consider the vector field $\hat f=f-\sum_{j=1}^m \hat \alpha_j g_j$. It follows from 1 and 2 in Proposition \ref{pro:2.12} that
\begin{equation}\label{eq:2.21}
\nabla_{X}(\pi(\hat f))=\nabla_{X}(\pi(f)-\bar g)=0,
\end{equation}
for any $X \in \Gamma^r(U; D)$. Hence $\hat f$ satisfies that
\[[\hat f, \Gamma^r(U; D)] \subseteq \Gamma^r(U; D).\]
Hence \eqref{eq:1.3a} follows immediately, where $\alpha=(-\hat \alpha_1 \cdots -\hat \alpha_m)$. This completes the proof for Theorem \ref{tm:3.5}.
\qed
\begin{remark}\label{remark 1}
In \cite{Cheng} Cheng and Tarn give the following sufficient and necessary conditions for local controlled invariance of the smooth involutive regular distribution $D$ for the smooth affine control system $\Sigma$:
\begin{subequations}
\begin{align}
&[f, D]/D\subseteq \text{span}_{C^{\infty}(U)}\{g_j/D|j=1, \cdots, m\},\label{eq:3.26a}\\
&[g_i, D]/D\subseteq \text{span}_{C^{\infty}(U)}\{g_j/D|j=1, \cdots, m\},i=1, \label{eq:3.26b}\cdots, m.
\end{align}
\end{subequations}
\eqref{eq:3.26a} and \eqref{eq:3.26b} are equivalent with our result \eqref{eq:3.9a} and \eqref{eq:3.9b}. However, the proof provided by \cite{Cheng} is not a geometric one. Here we provide a differential geometric proof for this result.
\end{remark}
\begin{corollary}\label{coro:3.6}
Consider the $C^r$-affine control system \eqref{eq:1.1} on $M$.  Let $D$ be an involutive regular $C^r$-distribution of $M$. Let $p$ be a regular point for the distribution $G$. Then $D$ is locally controlled invariant for $\Sigma$ at $p$, if and only if $D$ is locally controlled invariant for $\Delta$ at $p$, where $\Delta$ is the $C^r$-generalized affine distribution defined by $\Delta(p)=f(p)+\text{span}_{\mathbb{R}}\{g_1(p), \cdots, g_m(p)\}$, for each $p\in M$, if and only if there exists a sufficiently small  neighborhood $U$ of $p$, such that
\begin{equation}\label{eq:3.21}
[\Gamma^r(U;\Delta),\Gamma^r(U;D)]\subseteq \Gamma^r(U;D)+\Gamma^r(U;G).
\end{equation}
\end{corollary}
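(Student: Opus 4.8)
The plan is to prove the chain of equivalences by establishing the cycle ``$D$ locally controlled invariant for $\Sigma$ at $p$'' $\Rightarrow$ ``$D$ locally controlled invariant for $\Delta$ at $p$'' $\Rightarrow$ \eqref{eq:3.21} $\Rightarrow$ ``$D$ locally controlled invariant for $\Sigma$ at $p$''. Two elementary observations carry the bookkeeping. First, since $p$ is a regular point for $G$, the discussion at the end of Section \ref{sec:2} shows that, after shrinking $U$, any collection of $C^r$-local generators of $G$ over $U$ generates the $C^r(U)$-module $\Gamma^r(U;G)$; in particular $\text{span}_{C^r(U)}\{g_1,\cdots,g_m\}=\Gamma^r(U;G)$, and likewise for any other local generators realizing $L(\Delta)=G$ near $p$. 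Second, for any fixed $s_0\in\Gamma^r(U;\Delta)$ one has $\Gamma^r(U;\Delta)=s_0+\Gamma^r(U;G)$, since $s-s_0$ is a $C^r$-section of the linear part whenever $s,s_0\in\Gamma^r(U;\Delta)$; in particular $f$ and $f+g_i$, $i=1,\cdots,m$, all lie in $\Gamma^r(U;\Delta)$.

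For the first implication I would take the feedback $(\alpha,\beta)$ on $U$ provided by Definition \ref{def:1.2} and the associated $\hat f,\hat g_1,\cdots,\hat g_m$ of \eqref{eq:1.2}: conditions \eqref{eq:1.3a}, \eqref{eq:1.3b} are verbatim the bracket requirements of Definition \ref{def:1.3} for these vector fields, so only the realization $\Delta_q=\hat f(q)+\text{span}_{\mathbb{R}}\{\hat g_1(q),\cdots,\hat g_m(q)\}$ needs checking; since $\beta(q)\in GL(m,\mathbb{R})$ the $\hat g_i(q)$ span $G_q$, and $\hat f(q)-f(q)\in G_q$, so $\hat f(q)+G_q=f(q)+G_q=\Delta_q$. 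For the second implication, starting from $C^r$-vector fields $\tilde f,(\tilde g_i)_{i\in I}$ on $U$ realizing $\Delta$ with $[\tilde f,\Gamma^r(U;D)],[\tilde g_i,\Gamma^r(U;D)]\subseteq\Gamma^r(U;D)$, I would choose $i_1,\cdots,i_k$ with $\tilde g_{i_1}(p),\cdots,\tilde g_{i_k}(p)$ a basis of $G_p$, so that after shrinking $U$ these generate $\Gamma^r(U;G)$; then any $s\in\Gamma^r(U;\Delta)$ writes as $s=\tilde f+\sum_{j=1}^{k}\phi_j\tilde g_{i_j}$ with $\phi_j\in C^r(U)$, and for $X\in\Gamma^r(U;D)$
\begin{equation*}
[s,X]=[\tilde f,X]+\sum_{j=1}^{k}\phi_j[\tilde g_{i_j},X]-\sum_{j=1}^{k}(L_X\phi_j)\tilde g_{i_j},
\end{equation*}
whose first two terms lie in $\Gamma^r(U;D)$ and whose last lies in $\Gamma^r(U;G)$; this is \eqref{eq:3.21}.

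For the third implication I would apply \eqref{eq:3.21} to $f$ and to $f+g_i$ and subtract, obtaining $[f,\Gamma^r(U;D)]\subseteq\Gamma^r(U;D)+\Gamma^r(U;G)$ and $[g_i,\Gamma^r(U;D)]\subseteq\Gamma^r(U;D)+\Gamma^r(U;G)$ for $i=1,\cdots,m$; after shrinking $U$ so that $\Gamma^r(U;G)=\text{span}_{C^r(U)}\{g_1,\cdots,g_m\}$ these are precisely \eqref{eq:3.9a} and \eqref{eq:3.9b}, and Theorem \ref{tm:3.5} then yields local controlled invariance for $\Sigma$ at $p$, closing the cycle.

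The only genuinely delicate step is the repeated identification of a set of $C^r$-generators of $G$ with the $C^r(U)$-module $\Gamma^r(U;G)$ they span; this is exactly where the hypothesis that $p$ is a regular point for $G$ is indispensable, and it is precisely the point at which Definition \ref{def:1.3} departs from Definition \ref{def:1.2} at singular points, as the two examples in Section \ref{sec:2} illustrate. I would also need to be a little careful that the module inclusions obtained on $U$ survive restriction to the successively smaller neighborhoods (this follows from $D$ being regular, exactly as used elsewhere in the paper). Everything else is a routine computation with the Leibniz rule and the affine structure of $\Gamma^r(U;\Delta)$.
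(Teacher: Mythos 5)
Your proposal is correct and follows essentially the same route as the paper's proof: the same three-implication cycle ($\Sigma$-invariance $\Rightarrow$ $\Delta$-invariance $\Rightarrow$ \eqref{eq:3.21} $\Rightarrow$ $\Sigma$-invariance via Theorem \ref{tm:3.5}), with the regularity of $p$ used in exactly the same way to identify a finite set of generators of $G$ with the $C^r(U)$-module $\Gamma^r(U;G)$ on a sufficiently small $U$. You merely spell out some steps the paper leaves as ``obvious'' or ``direct calculation'' (the verification that $\hat f,\hat g_i$ realize $\Delta$, and the Leibniz computation), which is fine.
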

\begin{proof}
Since $p$ is a regular point for the distribution
$G$, if $U$ is a sufficiently small neighborhood of $p$,
$\Gamma^{r}(U; G)=\text{span}_{C^r(U)}\{g_1, \cdots, g_m\}$. Assume \eqref{eq:3.21} holds, then we have
\[[g_i,\Gamma^r(U;D)]\subseteq \Gamma^r(U;D)+\text{span}_{C^r(U)}\{g_1, \cdots, g_m\},\]
and
\[[f,\Gamma^r(U;D)]\subseteq \Gamma^r(U;D)+\text{span}_{C^r(U)}\{g_1, \cdots, g_m\}.\]
It follows from Theorem \ref{tm:3.5} that  $D$ is locally controlled invariant for $\Sigma$ at $p$.
If $D$ is locally controlled invariant for $\Sigma$ at $p$, it is obvious that $D$ is locally controlled invariant for $\Delta$ at $p$. Assume that  $D$ is locally controlled invariant for $\Delta$ at $p$, then there exist generators $\hat g_i \in \Gamma^{r}(U;G), i\in I$ of  $G$, such that $[\hat g_i, \Gamma^r(U;D)] \subseteq \Gamma^r(U;D)$. Since $p$ is a regular point, let $U$ be sufficiently small such that $\Gamma^{r}(U; G)=\text{span}_{C^r(U)}\{\hat g_{i_1}, \cdots, \hat g_{i_k}\}$, where $i_1, \cdots, i_k\in I$.  Then \eqref{eq:3.21} follows immediately by direct calculation. This completes the proof.\qed
\end{proof}
\begin{corollary}\label{coro:3.7}
Consider the $C^r$-affine control system \eqref{eq:1.1} on $M$.  Let $D$ be an involutive regular $C^r$-distribution of $M$. Let $p$ be a regular point for the generalized subbundle
$(G+D)/D$. Then $D$ is locally controlled invariant for $\Sigma$ at $p$, if and only if there exists a sufficiently small neighborhood $U$ of $p$, such that
\begin{subequations}
\begin{align}
&[g_i,\Gamma^r(U; D)]\subseteq \Gamma^r(U;D)+\Gamma^r(U;G), i=1, \cdots, m,\label{eq:3.22a}\\
&[f, \Gamma^r(U;D)] \subseteq \Gamma^r(U;D)+\Gamma^r(U;G).\label{eq:3.22b}
\end{align}
\end{subequations}
\end{corollary}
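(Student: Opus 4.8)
The plan is to reduce Corollary~\ref{coro:3.7} to Theorem~\ref{tm:3.5} by showing that, under the standing regularity hypothesis on $(G+D)/D$, the inclusions \eqref{eq:3.22a}--\eqref{eq:3.22b} are equivalent to \eqref{eq:3.9a}--\eqref{eq:3.9b}. The ``only if'' direction needs no hypothesis on $(G+D)/D$: one always has $\text{span}_{C^r(U)}\{g_1,\dots,g_m\}\subseteq\Gamma^r(U;G)$, so \eqref{eq:3.9a}--\eqref{eq:3.9b} imply \eqref{eq:3.22a}--\eqref{eq:3.22b}; since local controlled invariance of $D$ for $\Sigma$ at $p$ yields \eqref{eq:3.9a}--\eqref{eq:3.9b} by Theorem~\ref{tm:3.5}, it yields \eqref{eq:3.22a}--\eqref{eq:3.22b} as well. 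The real content is therefore the converse.

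For the converse I would first record the following description of $\Gamma^r(U;G)$ modulo $D$. As noted earlier, $\pi(G)=(G+D)/D$ is a $C^r$-generalized subbundle of $TM/D$, and $\pi(g_1),\dots,\pi(g_m)$ are local generators of it over $U$, since $g_1(q),\dots,g_m(q)$ span $G_q$ for every $q$. Because $p$ is a regular point of $(G+D)/D$, after shrinking $U$ the subbundle $\pi(G)$ has constant rank on $U$, and then, by the identification of a regular $C^r$-distribution with the module of its local $C^r$-sections (the discussion following the definition of a regular point in Section~\ref{sec:2}), $\Gamma^r(U;\pi(G))=\text{span}_{C^r(U)}\{\pi(g_1),\dots,\pi(g_m)\}$. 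Since $\pi$ is $C^r(U)$-linear, $\text{span}_{C^r(U)}\{\pi(g_1),\dots,\pi(g_m)\}\subseteq\pi(\Gamma^r(U;G))\subseteq\Gamma^r(U;\pi(G))$, so the three modules coincide; as $\pi(\Gamma^r(U;D))=\{0\}$, this gives
\[
\pi\big(\Gamma^r(U;D)+\Gamma^r(U;G)\big)=\text{span}_{C^r(U)}\{\pi(g_1),\dots,\pi(g_m)\}.
\]

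Granting this, the rest is the ``subtract off the $D$-component'' step already used in the proof of Theorem~\ref{tm:3.5}: fixing $X\in\Gamma^r(U;D)$ and applying $\pi$ to \eqref{eq:3.22a} gives $\pi([g_i,X])=\pi\big(\sum_j c_jg_j\big)$ for suitable $c_j\in C^r(U)$, so $[g_i,X]-\sum_j c_jg_j$ is a $C^r$-section of $TM|U$ with values in $\ker\pi=D$, hence lies in $\Gamma^r(U;D)$; therefore $[g_i,X]\in\Gamma^r(U;D)+\text{span}_{C^r(U)}\{g_1,\dots,g_m\}$, and likewise for $f$ from \eqref{eq:3.22b}. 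Thus \eqref{eq:3.9a}--\eqref{eq:3.9b} hold on $U$, and Theorem~\ref{tm:3.5} shows that $D$ is locally controlled invariant for $\Sigma$ at $p$. The one point requiring care is that it is the regularity of the \emph{quotient} $(G+D)/D$, not of $G$ itself, that is being used: $G$ may be singular at $p$, so the identification at a regular point must be invoked for the projected subbundle $\pi(G)\subseteq TM/D$, not for $G$; once the displayed equality is in place, everything else is routine.
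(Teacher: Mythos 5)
Your proposal is correct and follows essentially the same route as the paper: both use the regularity of $(G+D)/D$ at $p$ to identify $\Gamma^r(U;\pi(G))$ with $\text{span}_{C^r(U)}\{\pi(g_1),\dots,\pi(g_m)\}$ on a small enough $U$, push \eqref{eq:3.22a}--\eqref{eq:3.22b} through $\pi$, lift back modulo $\ker\pi=D$ to obtain \eqref{eq:3.9a}--\eqref{eq:3.9b}, and invoke Theorem~\ref{tm:3.5}. You merely spell out the lifting step and the ``only if'' direction, which the paper leaves as obvious.
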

\begin{proof}
To prove the "if" part, since $p$ is a regular point for the generalized subbundle
$(G+D)/D$,  we have
\begin{equation}\label{eq:3.23}
\Gamma^r(U; \pi(G))=\text{span}_{C^r(U)}\{\pi(g_1), \cdots, \pi(g_m)\},
\end{equation}
if $U$ is sufficiently small. It follows from \eqref{eq:3.22b} that
$\pi([f, X])\in \Gamma^r(U; \pi(G))=\text{span}_{C^r(U)}\{\pi(g_1), \cdots, \pi(g_m)\}$, for any $X \in  \Gamma^r(U; D)$.
Hence \eqref{eq:3.9b} holds. Similar arguments can be applied to $g_i$. Hence \eqref{eq:3.9a} holds. Then the "if" part follows  from Theorem \ref{tm:3.5}. The proof for the "only if" part is obvious.\qed
\end{proof}
\begin{remark}
As a classical result on controlled invariance, Corollary \ref{coro:3.7} is presented in the classical books \cite{schaft,Isidory} on geometric control theory. It also appears as Lemma 1.5  in \cite{1}. As mentioned in Section \ref{intro}, the geometric interpretation for Corollary \ref{coro:3.7} provided by  Lemma 3.1 in \cite{1} is not a complete one. The detailed reason is given as follows: compared with our definition of sections $\bar Z_i$ on the open subset $U$ of $M$ given by \eqref{eq:2.7}, \eqref{eq:2.8} and \eqref{eq:2.9},  the parallel vectors $\bar Z_i$ given  in  Lemma 3.1 in \cite{1} are defined only on  each leaf of the foliation,  not on any open subset of $M$. As a consequence, consider the local chart $\phi: U\rightarrow \mathbb{R}^n$ such that $D=\text{span}_{\mathbb{R}}\{\frac{\partial}{\partial x_1}, \cdots, \frac{\partial}{\partial x_k}\}$, and the projection map $\psi(x_1, \cdots, x_n)=(x_{k+1}, \cdots, x_n)$. Then Lemma 3.1 is equivalent with the following statement: $T_{x}\psi(G)=T_y\psi(G)$ if $\psi(x)=\psi(y)$. But neither the smoothness of the distribution $T\psi(G)$ of the orbit space $\psi(U)$, nor the existence of $C^r$-feedback functions $(\alpha, \beta)$ and the corresponding vector fields on any open subset of $M$ satisfying \eqref{eq:1.3a} and \eqref{eq:1.3b},
are studied  in  Lemma 3.1. In fact,  Lemma 3.1 is weaker than Lemma 1.5 in \cite{1}.

In the following we present a direct geometric proof for Corollary \ref{coro:3.7} as well as Lemma 1.5 in \cite{1}, which improves the proof given in \cite{1} and is simpler than the proof for the general case presented by Proposition \ref{pro:3.3}.
\begin{proof}
Consider the curve  $c: [0,1] \rightarrow U$ with $c(0)$ given by \eqref{eq:2.9}, $c(1)=q$ and  $\dot c(t) \in D_{c(t)}$.
Let $\sigma_i(t)=\sum_{j=1}^m {f}_{ij}(c(t))\pi({g}_j(c(t)))$, $i=1, \cdots, m$, where $f_{ij} \in C^r(U)$ satisfy that
$f_{ii}(0, 0, \cdots, q_{k+1}, \cdots, q_n)=1$, $f_{ij}(0, \cdots, 0,  q_{k+1}, \cdots, q_n)=0$, for $j \neq i$.

According to \eqref{eq:3.22a} and \eqref{eq:3.23},  there exist $\gamma_{ij} \in C^r(U)$, $j=1, \cdots, m$, such that
\[\nabla_{\dot c(t)}\pi(g_i(c(t))=\sum_{j=1}^m \gamma_{ij}(c(t))\pi(g_j(c(t)).\]
On the other hand,  it follows from  \eqref{eq:3.22a} and \eqref{eq:3.23} that $\nabla_{\frac{\partial}{\partial x^{l}}} \pi({g}_j)=\sum_{r=1}^m\Gamma^r_{lj}\pi(g_r),$
where $\Gamma^k_{lj} \in C^r(U), l=1,\cdots, k,  j=1, \cdots, m$. Then we have
\begin{eqnarray}
&&\nabla_{\dot c(t)}\sigma_i(t)=\nabla_{\dot c(t)}\sum_{j=1}^m {f}_{ij}(c(t))\pi({g}_j(c(t)))
=\sum_{j=1}^m \nabla_{\dot c(t)} {f}_{ij}(c(t))\pi({g}_j(c(t)))\nonumber\\
&=&\sum_{j=1}^m \dot f_{ij}(c(t))\pi({g}_j(c(t)))+ {f}_{ij}(c(t))\nabla_{\sum_{l=1}^{k}c^{l}(t) \frac{\partial}{\partial x^{l}}} \pi({g}_j(c(t)))\nonumber\\
&=&\sum_{j=1}^m \dot f_{ij}(c(t))\pi({g}_j(c(t)))+  {f}_{ij}(c(t))\sum_{l=1}^{k}c^{l}(t)\sum_{r=1}^{m}
\Gamma_{lj}^r(c(t)))\pi(g_r(c(t)))\nonumber\\
&=&\sum_{j=1}^m(\dot f_{ij}(c(t))+\sum_{l=1}^k\sum_{r=1}^{m}f_{ir}(c(t))c^l(t)\Gamma^{j}_{lr}(c(t)))\pi({g}_j(c(t))).
\end{eqnarray}

Let $\dot f_{ij}(c(t))+\sum_{l=1}^k\sum_{r=1}^{m}f_{ir}(c(t))c^l(t)\Gamma^{j}_{lr}(c(t))=0$, for $j=1, \cdots, m$. Then we get $m$ systems of linear equations
\[\dot f_i=A_if_i, i=1, \cdots, m,\]
with $f_i=(f_{i1}, \cdots, f_{im})^{T}, A_i=(a_i^{jl})_{m\times m}$ where $a_i^{jl}(t)=\sum_{r=1}^k c^r(t)\Gamma^{j}_{rl}(c(t))$. We know there exist unique solutions $\hat f_i(t)$ defined on $[0,1]$ with prescribed initial value $\hat f_{ii}(0)=1, \hat f_{ij}(0)=0$, for $j \neq i, i=1, \cdots,m$.

Let  $\hat \sigma_i(t)=\sum_{j=1}^m \hat {f}_{ij}(c(t))\pi({g}_j(c(t)))$.  We have $\nabla_{\dot c(t)}\hat \sigma_i(t)=0$. It follows from uniqueness of the solution of the equation $\nabla_{\dot c(t)}\sigma_i(t)=0$ with prescribed initial value that $\hat \sigma_i(t)$ is the parallel translation along $c(t)$ with  initial value defined by \eqref{eq:2.8}. Then according to our previous definition of $\bar Z_i$, we have
\begin{equation}
\bar Z_i(q)=\hat \sigma_i(1)=\sum_{j=1}^m \hat {f}_{ij}(c(1))\pi({g}_j(c(1))) \in \pi(G)_q,
\end{equation}
for $i=1, \cdots, m$. Since $p$ is a regular point for $(G+D)/D$, by shrinking $U$ if necessary we know there exist functions $\xi_{ij}\in C^r(U)$,
such that $\bar Z_i=\sum_{j=1}^m\xi_{ij}\pi(g_j), i,j=1,\cdots,m$. The remaining proof follows  the proof for Theorem \ref{tm:3.5}. \qed
\end{proof}
\end{remark}

\begin{corollary}\label{coro:3.9}
Let $M$ be a $C^{\omega}$-manifold. Let $\Delta$ be a $C^{\omega}$-generalized  affine distribution of $M$ with associated generalized distribution $G=L(\Delta)$. Let $D$ be an involutive  regular $C^{\omega}$-distribution of $M$. Then $D$ is locally controlled invariant for $\Delta$ at $p$, if there exists a sufficiently small  neighborhood $U$ of $p$, such that
\begin{equation}\label{eq:3.24}
[\Gamma^{\omega}(U;\Delta),\Gamma^{\omega}(U;D)]\subseteq \Gamma^{\omega}(U;D)+\Gamma^{\omega}(U;G).
\end{equation}
\end{corollary}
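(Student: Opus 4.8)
The plan is to deduce the corollary from Theorem~\ref{tm:3.5}, using the special feature of the real analytic category recorded in Proposition~\ref{pro:2.13}. First I would apply Proposition~\ref{pro:2.13} to $G=L(\Delta)$: after shrinking $U$, there are $g_1,\dots,g_m\in\Gamma^{\omega}(U;G)$ with $\Gamma^{\omega}(V;G)=\mathrm{span}_{C^{\omega}(V)}\{g_1|V,\dots,g_m|V\}$ for every open $V\subseteq U$; combined with the equivalent characterization of $C^{\omega}$-generalized distributions this also forces $G_q=\mathrm{span}_{\mathbb{R}}\{g_1(q),\dots,g_m(q)\}$ for all $q\in U$. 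Shrinking $U$ further if needed, choose $h_0\in\Gamma^{\omega}(U;\Delta)$. Then $\Delta_q=h_0(q)+\mathrm{span}_{\mathbb{R}}\{g_1(q),\dots,g_m(q)\}$ on $U$, so on $U$ the affine distribution $\Delta$ is precisely the affine distribution associated with the $C^{\omega}$-affine control system $\Sigma$ having drift $f:=h_0$ and input vector fields $g_1,\dots,g_m$.

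Next I would convert the hypothesis \eqref{eq:3.24} into the bracket conditions of Theorem~\ref{tm:3.5}. Since $\Gamma^{\omega}(U;\Delta)=h_0+\Gamma^{\omega}(U;G)$, both $h_0$ and $h_0+g_j$ lie in $\Gamma^{\omega}(U;\Delta)$; applying \eqref{eq:3.24} to each and subtracting (using $\mathbb{R}$-bilinearity of the Lie bracket, and that $\Gamma^{\omega}(U;D)+\Gamma^{\omega}(U;G)$ is a $C^{\omega}(U)$-module, hence an additive group) gives $[h_0,\Gamma^{\omega}(U;D)]\subseteq\Gamma^{\omega}(U;D)+\Gamma^{\omega}(U;G)$ and $[g_j,\Gamma^{\omega}(U;D)]\subseteq\Gamma^{\omega}(U;D)+\Gamma^{\omega}(U;G)$ for $j=1,\dots,m$. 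Because $\Gamma^{\omega}(U;G)=\mathrm{span}_{C^{\omega}(U)}\{g_1,\dots,g_m\}$ by the choice made in the first step, these are exactly \eqref{eq:3.9b} and \eqref{eq:3.9a} for $\Sigma$.

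Then I would invoke Theorem~\ref{tm:3.5}, which produces a neighbourhood $V\subseteq U$ of $p$ and $C^{\omega}$-feedback $(\alpha,\beta)$ on $V$ such that $\hat f=h_0+\sum_i g_i\alpha_i$ and $\hat g_i=\sum_j\beta_{ij}g_j$ satisfy $[\hat f,\Gamma^{\omega}(V;D)]\subseteq\Gamma^{\omega}(V;D)$ and $[\hat g_i,\Gamma^{\omega}(V;D)]\subseteq\Gamma^{\omega}(V;D)$. Finally, since $\beta(q)\in GL(m,\mathbb{R})$ for $q\in V$, one has $\hat f(q)+\mathrm{span}_{\mathbb{R}}\{\hat g_1(q),\dots,\hat g_m(q)\}=h_0(q)+\mathrm{span}_{\mathbb{R}}\{g_1(q),\dots,g_m(q)\}=\Delta_q$; thus the vector fields $\hat f,\hat g_1,\dots,\hat g_m$ on $V$ witness local controlled invariance of $D$ for $\Delta$ at $p$ in the sense of Definition~\ref{def:1.3}, which is the assertion.

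I expect the only delicate point to be the first step: everything after it is formal manipulation of the Lie bracket together with a direct citation of Theorem~\ref{tm:3.5}, but one must use real analyticity in an essential way to obtain local generators of $G$ that are simultaneously module generators of $\Gamma^{\omega}(U;G)$ and remain so under restriction. This is precisely what fails in the smooth category, as the example following Proposition~\ref{pro:2.13} shows, and it is the reason the statement is asserted only for $r=\omega$ and only as a sufficient condition rather than an equivalence.
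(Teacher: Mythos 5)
Your proof is correct and follows essentially the same route as the paper's: invoke Proposition~\ref{pro:2.13} to obtain real analytic local generators $g_1,\dots,g_m$ that generate the module $\Gamma^{\omega}(U;G)$, translate hypothesis \eqref{eq:3.24} into conditions \eqref{eq:3.9a}--\eqref{eq:3.9b}, and conclude via Theorem~\ref{tm:3.5}. Your write-up is in fact somewhat more careful than the paper's, which leaves implicit both the choice of the drift section $h_0$ and the final observation that the feedback-modified fields still generate $\Delta$ pointwise.
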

\begin{proof}
Since $G$ is of class $C^{\omega}$, if $U$ is sufficiently small, then it follows from  Proposition \ref{pro:2.13} that there exist local generators $g_1, \cdots, g_m \in \Gamma^{\omega}(U; G)$ of $G$ such that
\begin{equation}\label{eq:3.25}
\Gamma^{\omega}(U; G)=\text{span}_{C^{\omega}(U)}\{g_1, \cdots, g_m\}.
\end{equation}
Then we have
\[[g_i,\Gamma^{\omega}(U;D)]\subseteq \Gamma^{\omega}(U;D)+\text{span}_{C^{\omega}(U)}\{g_1, \cdots, g_m\},\]
and
\[[f,\Gamma^{\omega}(U;D)]\subseteq \Gamma^{\omega}(U;D)+\text{span}_{C^{\omega}(U)}\{g_1, \cdots, g_m\}.\]
Then the result follows immediately from Theorem \ref{tm:3.5}.\qed
\end{proof}

\section{Global controlled invariance}
\label{sec:4}
\subsection{General cases}
Let $D$ be an involutive regular $C^r$-distribution of $M$. Consider the foliation $\mathfrak{F}$ defined by $D$.  As claimed  in \cite{1},  $M/\mathfrak{F}$ is assumed to be a $C^r$-manifold in this subsection.
\begin{theorem}\label{tm:4.1}
 Let $M$ be a smooth manifold and $D$ be a smooth involutive regular distribution of $M$ such that $M/\mathfrak{F}$ is a smooth manifold, where $\mathfrak{F}$ is the foliation defined by $D$. Let $\Delta$ be a smooth generalized affine distribution of $M$ with associated smooth generalized distribution $L(\Delta)=G$.  Assume that $(G+D)/D$ is  a regular subbundle of $TM/D$. Assume also that $G \cap D$ is  a smooth generalized distribution of $M$. Then $D$ is globally controlled invariant for $\Delta$ if and only if it is locally controlled invariant for $\Delta$ at each point $p \in M$.
\end{theorem}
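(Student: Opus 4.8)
I would treat the two implications separately. The implication from global to local controlled invariance is the easy one, and it is essentially the observation made after Definition \ref{def:1.4}: since $D$ is regular, every $p\in M$ has a neighbourhood $U$ on which $\Gamma^r(U;D)$ is generated as a $C^r(U)$-module by the restrictions of finitely many global sections $d_1,\dots,d_l\in\Gamma^r(M;D)$. If $f,g_i$ ($i\in I$) are global vector fields realizing \eqref{eq:1.7a} and \eqref{eq:1.7b}, then for $X=\sum_j a_j\,d_j|_U$ the Leibniz identity $[f,a_j d_j|_U]=a_j[f,d_j]|_U+(L_f a_j)d_j|_U$ together with $[f,d_j]\in\Gamma^r(M;D)$ gives $[f,\Gamma^r(U;D)]\subseteq\Gamma^r(U;D)$, and similarly for the $g_i$; hence $D$ is locally controlled invariant for $\Delta$ at $p$. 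All the content is in the converse, which I would prove in two steps.

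\emph{Step 1: descend to the leaf space.} Let $\pi_{\mathfrak F}\colon M\to M/\mathfrak F$ be the quotient submersion; its fibres are the leaves of $\mathfrak F$ and $\ker d\pi_{\mathfrak F}=D$, so $d\pi_{\mathfrak F}$ induces a vector bundle isomorphism $TM/D\cong\pi_{\mathfrak F}^{*}T(M/\mathfrak F)$, under which the connection $\nabla$ of \eqref{eq:2.1} is the canonical flat partial connection of the pulled-back bundle along the fibres; in particular a local section of $TM/D$ is $\nabla$-parallel along a leaf precisely when its image under $d\pi_{\mathfrak F}$ is constant on that leaf, and parallel translation along a path inside a leaf is path-independent. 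Local controlled invariance at every point provides, near each point, $C^r$ vector fields $f^{\alpha},(g^{\alpha}_i)$ with $[f^{\alpha},\Gamma^r(U_\alpha;D)]\subseteq\Gamma^r(U_\alpha;D)$, $[g^{\alpha}_i,\Gamma^r(U_\alpha;D)]\subseteq\Gamma^r(U_\alpha;D)$ and $\Delta_q=f^{\alpha}(q)+\mathrm{span}_{\mathbb R}\{g^{\alpha}_i(q)\}$; hence $\pi(f^{\alpha})$ and $\pi(g^{\alpha}_i)$ are $\nabla$-parallel along leaves, so $\pi(\Delta)$, a $C^r$ affine subbundle of $TM/D$ with linear part $(G+D)/D$ (regular by hypothesis), has $d\pi_{\mathfrak F}(\Delta)$ locally constant along leaves, hence constant along leaves by connectedness of leaves. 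Therefore $\pi(\Delta)$ and $(G+D)/D$ descend to a $C^r$ affine subbundle $\bar\Delta$ of $T(M/\mathfrak F)$ modelled on a $C^r$ regular subbundle $\bar G\subseteq T(M/\mathfrak F)$; smoothness of the descended objects follows by expressing them through local sections of the submersion $\pi_{\mathfrak F}$.

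\emph{Step 2: globalize downstairs, then lift and correct.} On the paracompact manifold $M/\mathfrak F$ the affine bundle $\bar\Delta$ admits a global section $\bar f$ (patch local sections with a partition of unity, taking affine combinations) and the vector bundle $\bar G$ admits global generators $(\bar g_j)_j$. Fix a complement $E$ to $D$ in $TM$, and for a vector field $\bar v$ on $M/\mathfrak F$ let $\mathrm{lift}(\bar v)$ be the $E$-valued vector field $\pi_{\mathfrak F}$-related to $\bar v$; since every $X\in\Gamma^r(D)$ is $\pi_{\mathfrak F}$-related to $0$, the bracket $[\mathrm{lift}(\bar v),X]$ is $\pi_{\mathfrak F}$-related to $0$, i.e. lies in $D$, so $[\mathrm{lift}(\bar v),\Gamma^r(D)]\subseteq\Gamma^r(D)$. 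Put $f_1=\mathrm{lift}(\bar f)$ and $g^{(1)}_j=\mathrm{lift}(\bar g_j)$; by construction $f_1(p)\in\Delta_p+D_p$ and $g^{(1)}_j(p)\in G_p+D_p$ for all $p$. Comparing $f_1$, resp. $g^{(1)}_j$, on a cover $\{U_\alpha\}$ by foliation charts with the local generators of $\Delta$, resp. $G$, supplied by local controlled invariance yields $s_\alpha\in\Gamma^r(U_\alpha;D)$ with $f_1-s_\alpha\in\Gamma^r(U_\alpha;\Delta)$ and $t_{j,\alpha}\in\Gamma^r(U_\alpha;D)$ with $g^{(1)}_j-t_{j,\alpha}\in\Gamma^r(U_\alpha;G)$; on overlaps $s_\alpha-s_\beta$ and $t_{j,\alpha}-t_{j,\beta}$ take values in $G\cap D$, which by hypothesis is a $C^r$ generalized distribution whose sheaf of $C^r$ sections is fine (multiply by a partition of unity), so these \v{C}ech $1$-cocycles are coboundaries and yield global $s,t_j\in\Gamma^r(D)$ with $f:=f_1-s\in\Gamma^r(\Delta)$ and $g_j:=g^{(1)}_j-t_j\in\Gamma^r(G)$; all of $f,g_j$ remain foliate, being foliate vector fields minus global sections of the involutive distribution $D$. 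Finally adjoin a family $(e_i)$ of global generators of the $C^r$ generalized distribution $G\cap D$ (each is foliate and lies in $G=L(\Delta)$); since $\{\pi(g_j(p))\}$ spans $(G+D)_p/D_p$ while $\{e_i(p)\}$ spans $(G\cap D)_p$, the modular law gives $\mathrm{span}_{\mathbb R}\{g_j(p),e_i(p)\}=G_p$, hence $\Delta_p=f(p)+\mathrm{span}_{\mathbb R}\{g_j(p),e_i(p)\}$ with $f,g_j,e_i$ all satisfying $[\,\cdot\,,\Gamma^r(D)]\subseteq\Gamma^r(D)$; that is, $D$ is globally controlled invariant for $\Delta$.

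\emph{Main obstacle.} The delicate point is Step 2: a naive horizontal lift of the global downstairs data only satisfies $f_1\in\Delta+D$ and $g^{(1)}_j\in G+D$, and correcting it to honest global sections of $\Delta$ and $G$ forces one to patch local corrections whose overlap discrepancies lie in $G\cap D$. This is exactly where the two extra hypotheses are used: regularity of $(G+D)/D$ makes the leaf-space objects genuine smooth subbundles (so that global generators and a global affine section exist downstairs), and smoothness of $G\cap D$ makes the patching obstruction a class in $H^1$ of a fine sheaf, hence zero. One must also remember to supply separate global generators for $G\cap D$, since the corrected lifts of the $\bar g_j$ alone span only a complement of $(G\cap D)_p$ inside $G_p$.
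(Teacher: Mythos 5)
Your proposal is correct, and the easy direction and the descent to $M/\mathfrak{F}$ (Step 1) coincide with the paper's argument; but your globalization step is genuinely different from the paper's. The paper never forms a horizontal lift: for each global generator $Y$ of $T\Phi(G)$ downstairs it takes the \emph{local} sections $X_\alpha\in\Gamma^{\infty}(U_\alpha;G)$ with $T\Phi(X_\alpha)=Y|\Phi(U_\alpha)$ (supplied by regularity of $(G+D)/D$ via \eqref{eq:3.23}) and simply sets $X=\sum_i\lambda_i X_{\alpha(i)}$ for a partition of unity $\{\lambda_i\}$; since every $X_{\alpha(i)}(p)$ lies in the linear subspace $G_p$ and all of them project to the same $Y(\Phi(p))$, the combination is again a section of $G$ projecting to $Y$, and the drift is handled identically using that $\Delta_p$ is an affine subspace and $\sum_i\lambda_i=1$. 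This sidesteps your two-step ``lift into $G+D$ (resp.\ $\Delta+D$), then kill the $\check{\mathrm{C}}$ech obstruction in the fine sheaf of sections of $G\cap D$'' entirely, at the cost of being tied to the fact that fibres of $G$ and $\Delta$ are closed under linear, respectively affine, combinations. Your cocycle-correction route is essentially the strategy the paper reserves for the real analytic analogue (Theorem \ref{tm:4.3}), where partitions of unity are unavailable and the vanishing of $\check{\mathrm{H}}^1$ comes from Cartan's Theorem B instead of fineness; so your proof has the advantage of transferring almost verbatim to the $C^{\omega}$ case, while the paper's is shorter in the smooth case and avoids choosing a complement $E$ to $D$. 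Both arguments use the hypotheses in the same places --- regularity of $(G+D)/D$ for the local liftability/splitting, smoothness of $G\cap D$ for the global generators completing the spanning set (and, in your version, additionally as the coefficient sheaf of the $1$-cocycle) --- and both end with the same modular-law computation showing $X_1,\dots,X_l$ together with generators of $G\cap D$ span $G_p$.
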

\begin{proof}
The "only if " part follows from the arguments given in Section \ref{intro}. We only need to prove the "if" part. Consider the map $\Phi: M\rightarrow M/\mathfrak{F}$. Since $D$ is locally controlled invariant for $\Delta$, it follows that for each $p \in M$, there exists a local neighborhood $U$ of $p$, such that $T\Phi(G|U)$ is a well-defined smooth generalized distribution of $\Phi(U)$. Further, since $\Phi^{-1}(q)$ is a leaf of $\mathfrak{F}$, it is connected, which yields that  $T\Phi(G)$ is a well-defined smooth generalized distribution of $M/\mathfrak{F}$. Then it follows from Theorem \ref{tm:2.4} that there exist smooth vector fields $Y_1, \cdots, Y_l $ on  $M/\mathfrak{F}$ such that
\begin{equation}\label{eq:4.2}
(T\Phi(G))_{\bar p}=\text{span}_{\mathbb{R}}\{Y_1(\bar p), \cdots, Y_l(\bar p)\},
\end{equation}
for each $\bar p \in M/\mathfrak{F}$.

Given $Y \in \Gamma^{\infty}(T\Phi(G))$, we claim that there exists   $X \in \Gamma^{\infty}(G)$ such that $T\Phi(X)=Y$. Since $(G+D)/D$ is regular, it follows from \eqref{eq:3.23} in Corollary \ref{coro:3.7} that for each $p_{\alpha} \in M$, there exist a local neighborhood $U_\alpha$ of $p$, and $X_{\alpha} \in \Gamma^{\infty}(U_\alpha;G)$ such that $T\Phi(X_\alpha)=Y|\Phi(U_\alpha)$. Then we get an open covering
$\{U_\alpha\}$ of $M$. Since $M$ is a smooth, paracompact, Hausdorff manifold, let
$\{(\mathcal{W}_i, \lambda_i)\}$ be a partition of unity \cite{Abraham} subordinated to the open covering $\{U_\alpha\}$ such that
each open set $\mathcal{W}_i$ is a subset of some open set $\mathcal{U}_{\alpha(i)}$. Let $X$ be defined by
\[X(p)=\sum_{i}\lambda_{i}X_{\alpha(i)}(p)\]
a finite sum at each $p \in M$. Then $X \in \Gamma^{\infty}(G)$ satisfies that
\[T_{p}\Phi(X)=\sum_{i}\lambda_{i}(p)T_{p}\Phi(X_{\alpha(i)}(p)))=\sum_{i}\lambda_{i}(p)Y(\Phi(p))=Y(\Phi(p)),\]
for each $p \in M$. This completes the proof of the claim.

Hence we get $X_1, \cdots, X_l \in \Gamma^{\infty}(G)$ such that $T\Phi(X_i)=Y_i, i=1, \cdots, l$. On the other hand, since $G \cap D$ is  a smooth generalized distribution of $M$, it follows from Theorem \ref{tm:2.4} that there exist global generators $X_{l+1}, \cdots, X_r\in  \Gamma^{\infty}(G\cap D)$ for $G \cap D$.
We claim that $X_1, \cdots, X_r$ are global generators for $G$.
Let $v_p \in G_p$. From \eqref{eq:4.2} we know that  there exist constants $c_1, \cdots, c_l$ such that $T_p\Phi(v_p)=\sum_{i=1}^lc_iY_i(\Phi(p))$.
Then we have \[T\Phi(v_p-\sum_{i=1}^lc_iX_i(p))=T\Phi(v_p)-\sum_{i=1}^lc_iY_i(\Phi(p))=0.\] This yields that $v_p-\sum_{i=1}^lc_iX_i(p) \in D_p\cap G_p$. Then there exist constants $c_{l+1}, \cdots, c_r$  such that
\[v_p-\sum_{i=1}^lc_iX_i(p)=\sum_{i=l+1}^rc_iX_i(p).\]
Hence $v_p=\sum_{i=1}^rc_iX_i(p)$. That is, $X_1, \cdots, X_r$ are global generators for $G$.  It is obvious that $[X_i, \Gamma^{\infty}(D)] \subseteq \Gamma^{\infty}(D)$, for $i=1, \cdots, r$.

It now remains to prove the existence of $f\in \Gamma^{\infty}(\Delta)$ such that $[f, \Gamma^{\infty}(D)] \subseteq \Gamma^{\infty}(D)$. By using similar arguments as above, we know that the affine distribution $T\Phi(\Delta)$  of  $M/\mathfrak{F}$ is well-defined and smooth. By using partition of unity  we  get a smooth vector field $g$ on $M/\mathfrak{F}$ such that $g(\bar p) \in T\Phi(\Delta)_{\bar p}$ for each $\bar p \in M/\mathfrak{F}$. Since for each $p \in M$, there exist a neighborhood $U$ of $p$ and $f_p \in \Gamma^{\infty}(U; \Delta)$ such  that $T\Phi(f_p)-g|U \in \Gamma^{\infty}(\Phi(U); T\Phi(G))$, where $T\Phi(f_p) \in \Gamma^{\infty}(\Phi(U); T\Phi(\Delta))$. Then it follows from \eqref{eq:3.23} in Corollary \ref{coro:3.7} that,  by shrinking $U$ if necessary,  there exists $\hat f_p \in  \Gamma^{\infty}(U; \Delta)$ such that $T\Phi(\hat f_p)=g|\Phi(U)$. By using partition of unity as above, we then get a smooth vector field $\hat f=\sum_{i}\lambda_{i}\hat f_{p(i)}$ on $M$ such that $T\Phi(\hat f)=g$. This yields that $[\hat f, \Gamma^{\infty}(D)] \subseteq \Gamma^{\infty}(D)$. We only need to show that $\hat f(p)\in \Delta_p$ for each $p\in M$. Actually We have  a finite sum at $p$:
\begin{align*}
\hat f(p)&=\sum_{i}\lambda_{i}(p)\hat f_{p(i)}(p)=\sum_{i}\lambda_{i}(p)(v_0+v_{p_i})\\
&=(\sum_{i}\lambda_{i}(p))v_0+\sum_{i}\lambda_{i}(p)v_{p_i}=v_0+v \in \Delta_p,
\end{align*}
where $v_0 \in \Delta_p, v_{p_i}, v \in G_p$. This completes the proof.\qed
\end{proof}
\begin{remark}
It follows from the above proof that there exist finitely many global generators $X_1, \cdots, X_r \in \Gamma^{\infty}(G)$ of $G$, such that $[X_i, \Gamma^{\infty}(D)] \subseteq \Gamma^{\infty}(D)$, for $i=1, \cdots, r$. That is, in Theorem \ref{tm:4.1} the index set $I$ given in Definition \ref{def:1.4} can be finite.
\end{remark}
\begin{corollary}
 Let $M$ be a smooth manifold and $D$ be a smooth involutive regular distribution of $M$ such that $M/\mathfrak{F}$ is a smooth manifold, where $\mathfrak{F}$ is the foliation defined by $D$. Let $\Delta$ be a smooth affine distribution of $M$ with associated smooth distribution $L(\Delta)=G$. Assume that both $G$ and  $G \cap D$ have constant rank on $M$. Then $D$ is globally controlled invariant for $\Delta$ if and only if
\begin{equation}\label{eq:4.1}
[\Gamma^{\infty}(\Delta),\Gamma^{\infty}(D)]\subseteq \Gamma^{\infty}(D+G).
\end{equation}
\end{corollary}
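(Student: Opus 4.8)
The plan is to deduce this corollary directly from Theorem~\ref{tm:4.1} by verifying that the constant-rank hypotheses on $G$ and $G\cap D$ imply the hypotheses of that theorem, and that condition~\eqref{eq:4.1} is equivalent to local controlled invariance at every point. First I would observe that if $G$ has constant rank on $M$, then so does its linear part, and since $D$ has constant rank, $(G+D)/D$ is automatically a regular (constant-rank) subbundle of $TM/D$: indeed $\operatorname{rank}((G+D)/D)(p)=\operatorname{rank}(G+D)(p)-\operatorname{rank}(D)=\operatorname{rank}(G)(p)+\operatorname{rank}(D)-\operatorname{rank}(G\cap D)(p)-\operatorname{rank}(D)$, which is constant because both $\operatorname{rank}(G)$ and $\operatorname{rank}(G\cap D)$ are constant by hypothesis. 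Also $G\cap D$, having constant rank, is in particular a smooth generalized distribution of $M$. So all structural hypotheses of Theorem~\ref{tm:4.1} are met, and the conclusion becomes: $D$ is globally controlled invariant for $\Delta$ if and only if it is locally controlled invariant for $\Delta$ at each point $p\in M$.

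Next I would reduce ``locally controlled invariant for $\Delta$ at each point'' to the infinitesimal condition~\eqref{eq:4.1}. Since $G$ has constant rank, every point of $M$ is a regular point for $G$, hence also a regular point for $(G+D)/D$, so Corollary~\ref{coro:3.7} applies at each $p$: $D$ is locally controlled invariant for $\Sigma$ at $p$ (equivalently, by Corollary~\ref{coro:3.6}, for $\Delta$ at $p$, since $p$ is regular for $G$) if and only if on a sufficiently small neighborhood $U$ of $p$ we have $[g_i,\Gamma^\infty(U;D)]\subseteq\Gamma^\infty(U;D)+\Gamma^\infty(U;G)$ for all $i$ and $[f,\Gamma^\infty(U;D)]\subseteq\Gamma^\infty(U;D)+\Gamma^\infty(U;G)$, where $f,g_1,\dots,g_m$ are local generators of $\Delta$. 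These local conditions, quantified over all $p$, are exactly the restriction to small opens of the global inclusion~\eqref{eq:4.1}; conversely~\eqref{eq:4.1} restricts to each such $U$, using that generators of $\Gamma^\infty(U;\Delta)$ and $\Gamma^\infty(U;D)$ can be taken as restrictions of global sections (as discussed in the paragraph before Section~\ref{sec:4}, valid here because $D$ has constant rank and, by the constant-rank hypothesis together with Theorem~\ref{tm:2.4}, $G$ is globally finitely generated). I would spell out that $\Gamma^\infty(D+G)=\Gamma^\infty(D)+\Gamma^\infty(G)$ holds globally because $D+G$ has constant rank (being $(G+D)/D$ regular plus $D$ regular), so partition of unity lets us split sections.

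Putting the two reductions together gives the stated equivalence. The only real point requiring care -- and the step I expect to be the main obstacle -- is the passage between the genuinely local statements coming out of Corollary~\ref{coro:3.7} (which live on ``sufficiently small'' neighborhoods depending on $p$) and the single global inclusion~\eqref{eq:4.1}: one direction needs that a global bracket inclusion restricts correctly, which is immediate, but the other needs that local inclusions on an open cover patch to a global one, which is where I must invoke that $\Gamma^\infty(D)$ is generated by restrictions of global sections and that $D+G$ is a constant-rank (hence its section module is well-behaved under partition of unity) distribution. Once that bookkeeping is in place, the corollary follows formally from Theorem~\ref{tm:4.1} and Corollary~\ref{coro:3.7}, and I would keep the written proof short by citing those results rather than reproving the connection-theoretic machinery.
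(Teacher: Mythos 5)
Your proposal is correct and follows essentially the same route as the paper: verify that the constant-rank hypotheses on $G$ and $G\cap D$ give regularity of $(G+D)/D$ and smoothness of $G\cap D$ so that Theorem~\ref{tm:4.1} applies, then use Corollary~\ref{coro:3.6}/\ref{coro:3.7} to identify local controlled invariance with the localized form of \eqref{eq:4.1}, handling the local-to-global passage via restrictions of global generators and the splitting $\Gamma^{\infty}(W;D+G)=\Gamma^{\infty}(W;D)+\Gamma^{\infty}(W;G)$ on small opens. The point you flag as the main obstacle is exactly the step the paper also spends its effort on, and your treatment of it matches the paper's.
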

\begin{proof}
Since both $G$ and  $G \cap D$ have constant rank on $M$, it follows that $(G+D)/D$ has constant rank and $G \cap D$ is smooth. Then according to Theorem \ref{tm:4.1}, $D$ is globally controlled invariant for $\Delta$ if and only if $D$ is locally controlled invariant for $\Delta$.
Since $G$ has constant rank, it follows from Corollary \ref{coro:3.6} that  $D$ is locally controlled invariant for $\Delta$ if and only if for each $p \in M$ there exists a local neighborhood $U$ of $p$, such that \eqref{eq:3.21} holds. Hence under the above assumptions, it suffices to show  the equivalence between
\eqref{eq:3.21} at every point in $M$ and  \eqref{eq:4.1}.

Assume that \eqref{eq:4.1} holds. Since $G$ and $D$ have constant rank, it follows that for each $p \in U$,
there exist  $g_1, \cdots, g_m$ and $d_1, \cdots, d_k \in \Gamma^{\infty}(U;D)$ which span the module $\Gamma^{\infty}(U;G)$ and $\Gamma^{\infty}(U;D)$ over $C^{\infty}(U)$ respectively. Besides, $g_1|V, \cdots, g_m|V$ and  $d_1|V, \cdots, d_k|V \in \Gamma^{\infty}(V;D)$ span the module $\Gamma^{\infty}(V;G)$ and $\Gamma^{\infty}(V;D)$ over $C^{\infty}(V)$  respectively, where $V$ is any open subset of $U$.
Then by using  partition of unity we get that, for each $p \in M$,  there exist generators for the module of local sections of $G$ and $D$ on a sufficiently small neighborhood $W$ of $p$ that are the restrictions of global sections of $G$ and $D$ to $W$ respectively. Then by restrictions of  globally defined sections in \eqref{eq:4.1} to $W$, we have
\[[\Gamma^{\infty}(W;G),\Gamma^{\infty}(W;D)]\subseteq \Gamma^{\infty}(W;D+G).\]
Since $G, D$ and $G\cap D$  have constant rank, it follows that $\Gamma^{\infty}(W; D+G)= \Gamma^{\infty}(W; D)+ \Gamma^{\infty}(W; G)$ if $W$ is sufficiently small. Hence we have
\[[\Gamma^{\infty}(W;G),\Gamma^{\infty}(W;D)]\subseteq \Gamma^{\infty}(W;D)+\Gamma^{\infty}(W;G).\]
Similarly we can prove that $[f, \Gamma^{\infty}(W;D)]\subseteq \Gamma^{\infty}(W;D)+\Gamma^{\infty}(W;G).$ Hence \eqref{eq:3.21} holds.

Assume \eqref{eq:3.21} holds at each point in $M$, it is obvious that  \eqref{eq:4.1} holds. This completes the proof.\qed
\end{proof}

In what follows we will investigate the problem of global controlled invariance for real analytic generalized affine distributions,
We have the following results where, instead of partition of unity used in the smooth case,  Cartan's Theorem A and B play a central role.
\begin{theorem}\label{tm:4.3}
Let $M$ be a $C^\omega$-manifold and $D$ be an involutive regular $C^\omega$-distribution of $M$ such that $M/\mathfrak{F}$ is a $C^\omega$-manifold, where $\mathfrak{F}$ is the foliation defined by $D$. Let $\Delta$ be a $C^\omega$-generalized affine distribution of $M$ with associated $C^{\omega}$-generalized distribution $L(\Delta)=G$.  Assume that  $(G+D)/D$ is a regular subbundle of $TM/D$ and $G \cap D$ is a $C^{\omega}$-generalized  distribution of $M$. Then $D$ is globally controlled invariant for $\Delta$ if and only if it is locally controlled invariant for $\Delta$ at each point $p \in M$.
\end{theorem}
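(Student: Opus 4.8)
The \emph{only if} direction is immediate: restricting the globally defined vector fields to a neighbourhood of any point yields the local statement, exactly as argued in Section~\ref{intro}. For the \emph{if} direction the plan is to transcribe the proof of Theorem~\ref{tm:4.1} at the level of local data, replacing every appeal to a partition of unity by an appeal to coherent analytic sheaf cohomology on real analytic manifolds, i.e.\ Cartan's Theorems~A and~B \cite{Cartan}.

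First I would descend the data to $N:=M/\mathfrak{F}$, with quotient map $\Phi\colon M\to N$. For each $p\in M$, local controlled invariance of $D$ for $\Delta$ furnishes, on a neighbourhood $U$ of $p$, $C^{\omega}$ vector fields generating $\Delta$ over $U$ whose brackets with $\Gamma^{\omega}(U;D)$ land in $\Gamma^{\omega}(U;D)$; their $\pi$-images are therefore $\nabla$-parallel along $D$ and descend to $\Phi(U)$. Since the fibres $\Phi^{-1}(\bar q)$ are the connected leaves of $\mathfrak{F}$ and $\nabla$ is flat, these local descriptions patch into a well-defined generalized affine distribution $T\Phi(\Delta)$ on $N$ with linear part $T\Phi(G)$, both of class $C^{\omega}$ because locally they are spanned by $T\Phi$-images of $C^{\omega}$ generators (here the regularity of $(G+D)/D$ enters exactly as in Corollary~\ref{coro:3.7}, to choose those generators so that their projections locally generate $\pi(G)$). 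Applying Theorem~\ref{tm:2.3} on $N$ then gives a (possibly infinite) family $\{Y_i\}_{i\in I}$ of global $C^{\omega}$ sections generating $T\Phi(G)$, which is permissible by Definition~\ref{def:1.4}; and a global $C^{\omega}$ section $g$ of $T\Phi(\Delta)$ exists because the sheaf of its $C^{\omega}$ sections is a torsor under the coherent sheaf of sections of $T\Phi(G)$, whose first cohomology over $N$ vanishes by Cartan's Theorem~B.

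The crux is the lifting lemma: \emph{every global $C^{\omega}$ section $Y$ of $T\Phi(G)$ lifts to a global $X\in\Gamma^{\omega}(M;G)$ with $T\Phi(X)=Y$}, and likewise $g$ lifts to $\hat f\in\Gamma^{\omega}(M;\Delta)$ with $T\Phi(\hat f)=g$. Local lifts exist by the regularity of $(G+D)/D$ together with Corollary~\ref{coro:3.7} (the parallel sections $\bar Z_i$ of that construction lie in $\pi(G)$), and any two local lifts differ by a local $C^{\omega}$ section of $G\cap D$. Writing $\mathcal{G},\mathcal{D}$ for the sheaves of $C^{\omega}$ sections of $G,D$: by Theorem~\ref{tm:2.3}, Proposition~\ref{pro:2.13}, and coherence of the sheaf of $C^{\omega}$ sections of $TM$, the sheaves $\mathcal{G},\mathcal{D}$ and $\mathcal{G}\cap\mathcal{D}$ are coherent (the last one because $G\cap D$ is assumed $C^{\omega}$, equivalently as the intersection of coherent subsheaves of $\mathcal{G}$). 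Choosing a cover of $M$ by neighbourhoods carrying local lifts, the differences of the lifts over overlaps form a $1$-cocycle valued in $\mathcal{G}\cap\mathcal{D}$; since $H^{1}(M;\mathcal{G}\cap\mathcal{D})=0$ by Cartan's Theorem~B it is a coboundary, and subtracting the corresponding local sections of $G\cap D$ glues the local lifts to a global $X$ with $T\Phi(X)=Y$. The affine lift $\hat f$ of $g$ is obtained by the same cocycle argument on the torsor of $C^{\omega}$ sections of $\Delta$ over $\mathcal{G}\cap\mathcal{D}$, with local lifts provided by Corollary~\ref{coro:3.9}.

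Finally I would assemble the output as in Theorem~\ref{tm:4.1}: let $X_i$ lift $Y_i$ for $i\in I$, and let $\{X_j\}_{j\in J}$ be global $C^{\omega}$ generators of $G\cap D$ (these exist by Theorem~\ref{tm:2.3} applied to the $C^{\omega}$ generalized distribution $G\cap D$). Given $v_p\in G_p$, write $T_p\Phi(v_p)$ as a finite combination of $Y_i(\Phi(p))$, subtract the corresponding combination of $X_i(p)$ to land in $G_p\cap D_p$, and express the remainder in the $X_j(p)$; hence $\{X_i\}_{i\in I}\cup\{X_j\}_{j\in J}$ generates $G$. Because $T\Phi(X_i)=Y_i$ and $T\Phi(\hat f)=g$ are well defined on $N$ we get $[X_i,\Gamma^{\omega}(D)]\subseteq\Gamma^{\omega}(D)$ and $[\hat f,\Gamma^{\omega}(D)]\subseteq\Gamma^{\omega}(D)$, while $[X_j,\Gamma^{\omega}(D)]\subseteq\Gamma^{\omega}(D)$ since $X_j\in\Gamma^{\omega}(D)$ and $D$ is involutive, and $\hat f(p)\in\Delta_p$ since $\hat f$ is a section of $\Delta$. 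As $\Delta_p=\hat f(p)+G_p=\hat f(p)+\text{span}_{\mathbb{R}}\{X_i(p),X_j(p)\mid i\in I,\ j\in J\}$, this is exactly global controlled invariance of $D$ for $\Delta$. The main obstacle is the sheaf-theoretic step of the preceding paragraph: establishing coherence of the sheaves of $C^{\omega}$ sections in play, writing down the correct short exact sequences of sheaves on $M$ and on $N$, and invoking the real analytic Cartan Theorems in place of the partitions of unity available in the smooth proof of Theorem~\ref{tm:4.1}.
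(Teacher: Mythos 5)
Your proposal is correct and follows essentially the same route as the paper: descend to $M/\mathfrak{F}$, obtain global sections of $T\Phi(G)$ and $T\Phi(\Delta)$ via Cartan's Theorem A and a cocycle argument, and glue local lifts by killing a $1$-cocycle valued in the coherent sheaf $\mathscr{G}^{\omega}_D\cap\mathscr{G}^{\omega}_G$ using Cartan's Theorem B, with $G\cap D$ being $C^{\omega}$ supplying the vertical corrections. The only (immaterial) difference is organizational: the paper runs the lifting argument pointwise, producing for each $p\in M$ and $v\in G_p$ a corrected global section of $G$ through $v$, rather than first fixing a global generating family $\{Y_i\}$ of $T\Phi(G)$ and supplementing its lifts with generators of $G\cap D$ as you do.
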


We will first introduce some basic elements of sheaf theory and Cartan's Theorem A and B  for coherent real analytic sheaf that we will use in the proof of Theorem \ref{tm:4.3}.
\begin{definition}
A presheaf of sets over $M$ is an assignment to each open set $U \subseteq M$ a set $F(U)$ and, to each pair of open sets  $V, U \subseteq M$ with $V \subseteq U$, a map $r_{U, V}: F(U)\rightarrow  F(V)$ called the restriction map, with these
assignments having the following properties:\\
1.  $r_{U, U}$ is the identity map.\\
2.  If $W, V, U \subseteq M$ are open with $W \subseteq V \subseteq U$, then $r_{U, W}=r_{V,W} \circ r_{U,V}$.

A presheaf of rings over $M$ is a presheaf $\mathscr{R}=(\mathscr{R}(U))_{U \text{open}}$ whose local sections are rings and for which the restriction maps $r_{U,V}: \mathscr{R}(U)\rightarrow \mathscr{R}(V)$,
$U, V \subseteq M$ open, $V \subseteq U$ are homomorphisms of rings.

Let $\mathscr{R} = (\mathscr{R}(U))_{U \text{open}}$ be a presheaf of rings over $M$, a presheaf of $\mathscr{R}$-module is a presheaf $\mathscr{F} = (\mathscr{F}(U))_{U \text{open}}$  of sets such that $\mathscr{F}(U)$ is a module over $\mathscr{R}(U)$ and such that the restriction maps $r_{U,V}^{\mathscr{R}}$ and $r_{U,V}^{\mathscr{F}}$ satisfy
\begin{eqnarray}
&&r_{U,V}^{\mathscr{F}}(s+t)=r_{U,V}^{\mathscr{F}}(s)+r_{U,V}^{\mathscr{F}}(t),  s, t \in \mathscr{F}(U), \nonumber\\
&&r_{U,V}^{\mathscr{F}}(fs)=r_{U,V}^{\mathscr{R}}(f)r_{U,V}^{\mathscr{F}}(s), f \in \mathscr{R}(U), s \in \mathscr{F}(U).\nonumber
\end{eqnarray}
\end{definition}
\begin{definition}
A presheaf $\mathscr{F} = (\mathscr{F}(U))_{U \text{open}}$  is a sheaf if\\
1. The presheaf $\mathscr{F}$ is separated: if $U \subseteq M$ is open, if $(U_a)_{a\in A}$ is an open covering of $U$,  and if $s, t \in \mathscr{F}(U)$ satisfy $r_{U,U_a}(s) = r_{U,U_a}(t)$ for every $a  \in  A$.  then $s=t$.\\
2. The presheaf  $\mathscr{F}$ has the gluing property when, if $U \subseteq M$ is open, if $(U_a)_{a\in A}$ is an open covering of $U$, and if, for each $a \in A$, there exists $s_a \in  \mathscr{F}(U_a)$ with the family
$(s_a)_{a\in A}$ satisfying
\[r_{U_{a_1}, U_{a_1}\cap U_{a_2}} (s_{a_1})=r_{U_{a_2}, U_{a_1}\cap U_{a_2}}
(s_{a_2}),\]
for each $a_1, a_2 \in A$, then there exists $s \in  \mathscr{F}(U)$ such that $s_a = r_{U,U_a}(s)$ for each $a \in A$.
\end{definition}

Let $\mathscr{F}$ be a presheaf of sets on $M$. Let $p\in M$ and let $\mathcal{N}_p$  be the collection of open subsets of $M$ containing $p$. We define an equivalence relation in $(\mathscr{F}(U))_{U\in \mathscr{N}_p}$ by saying that $s_1 \in  \mathscr{F}(U_1)$ and $s_2 \in \mathscr{F}(U_2)$ are equivalent if there exists $\mathcal{V}\in  \mathscr{N}_p$ such that $V \subseteq U_1, V \subseteq U_2$, and $r_{U_1,V}(s_1) = r_{U_2, V}(s_2)$. The equivalence class of a section $s \in \mathscr{F}(U)$ we denote  by $[s]_p$.
\begin{definition}
 Let $\mathscr{F}=(\mathscr{F}(U))_{U \text{open}}$ be a presheaf of sets over $M$. For $p \in M$, the stalk of $\mathscr{F}$ at $p$ is the set of equivalence classes under the equivalence relation defined above, and is denoted by $\mathscr{F}_p$. The equivalence class $[s]_p$ of a section $s \in \mathscr{F}(U)$ is called the germ of $s$ at $p$.
\end{definition}

Then a natural subsheaf arising from a generalized subbundle is given as follows.
\begin{definition}
Let $\rho: E \rightarrow M$ be a $C^{r}$-vector bundle. Let $F \subseteq E$ be a $C^r$-generalized
subbundle.  The sheaf of sections of $F$ is the sheaf $\mathscr{G}_F^r$ whose local sections over the open set $U \subseteq M$ is the set of $C^r$-sections of $F|U$.
\end{definition}

$\mathscr{G}_F^r$ is a sheaf of modules over the sheaf of rings $\mathscr{C}_M^r$, where $\mathscr{C}_M^r$ is a sheaf of rings of $C^r$-functions on $M$. The stalk of $\mathscr{G}_F^r$ at $p \in M$ is a module  over the ring $\mathscr{C}_{p, M}^r$.

In the holomorphic case, the big result proved by Cartan \cite{cartan} and known as "Cartan's Theorem A", has as a consequence that the module of germs at $p$ of sections of a holomorphic vector bundle over a Stein base is generated by germs of
global sections. In \cite{Cartan} these holomorphic results are extended to the real analytic
case. However, Cartan's results extend far beyond
sheaves of sections of vector bundles to the setting of coherent analytic sheaves.
We refer to \cite{Lewis} for a detailed discussion on  Cartan's Theorem A and B for coherent real analytic sheaf, and hence for sheaf of sections of real analytic generalized distribution.
\begin{definition}
Let $M$ be a real analytic manifold. A
coherent real analytic sheaf is a coherent sheaf $\mathscr{F}$ of $\mathscr{C}_M^{\omega}$-modules.
\end{definition}
\begin{proposition}\cite{Lewis}\label{prop:2.1}
Let $M$ be a real analytic manifold. Let $G$ be a real analytic generalized distribution of $M$. Then $\mathscr{G}_G^{\omega}$ is a coherent real analytic sheaf.
\end{proposition}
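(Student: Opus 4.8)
The plan is to reduce the statement to the Oka--Cartan coherence theorem. Recall that, once one knows that $\mathscr{C}_M^\omega$ is itself a coherent sheaf of rings, a sheaf of $\mathscr{C}_M^\omega$-modules is coherent as soon as it is, locally, the image of a morphism between coherent sheaves, since images of morphisms of coherent sheaves are coherent. The deep input, which I would simply cite, is precisely that $\mathscr{C}_M^\omega$ is a coherent sheaf of rings: this is Oka's coherence theorem for germs of holomorphic functions together with its real analytic counterpart due to Cartan (see \cite{Cartan,Lewis}). Granting this, $(\mathscr{C}_M^\omega|_U)^k$ is coherent for every open $U\subseteq M$ and every $k$, and since $TM$ is a real analytic vector bundle the sheaf $\mathscr{G}_{TM}^\omega$ of real analytic vector fields is locally isomorphic to $(\mathscr{C}_M^\omega)^n$ and hence coherent.

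Next I would fix $p\in M$ and invoke Proposition~\ref{pro:2.13}: there are a neighbourhood $U$ of $p$ and sections $s_1,\dots,s_l\in\Gamma^\omega(U;G)$ such that $s_1|_V,\dots,s_l|_V$ generate the $C^\omega(V)$-module $\Gamma^\omega(V;G)=\mathscr{G}_G^\omega(V)$ for every open $V\subseteq U$. Consider the morphism of $\mathscr{C}_M^\omega|_U$-modules
\[
\psi\colon (\mathscr{C}_M^\omega|_U)^l \longrightarrow \mathscr{G}_{TM}^\omega|_U,\qquad (\lambda_1,\dots,\lambda_l)\longmapsto \sum_{i=1}^l \lambda_i\, s_i.
\]
For every open $V\subseteq U$ one has $\psi_V\big((\mathscr{C}_M^\omega(V))^l\big)=\text{span}_{C^\omega(V)}\{s_1|_V,\dots,s_l|_V\}=\mathscr{G}_G^\omega(V)$ by the choice of the $s_i$, so the presheaf image of $\psi$ coincides with the sheaf $\mathscr{G}_G^\omega|_U$ and therefore equals the sheaf-theoretic image, i.e.\ $\operatorname{im}\psi=\mathscr{G}_G^\omega|_U$. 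Since $(\mathscr{C}_M^\omega|_U)^l$ and $\mathscr{G}_{TM}^\omega|_U$ are coherent, so is $\operatorname{im}\psi$; hence $\mathscr{G}_G^\omega|_U$ is coherent, and as $p$ was arbitrary and coherence is a local property, $\mathscr{G}_G^\omega$ is a coherent sheaf of $\mathscr{C}_M^\omega$-modules, that is, a coherent real analytic sheaf.

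The only genuinely hard ingredient is the Oka--Cartan theorem that $\mathscr{C}_M^\omega$ is coherent, which I would not reprove. Within the remaining argument the one point needing care is that the presheaf image of $\psi$ is already a sheaf, equal to $\mathscr{G}_G^\omega|_U$: this is exactly the force of Proposition~\ref{pro:2.13}, because without it a local section of $G$ over $V$ need only be \emph{locally} a $C^\omega(V)$-combination of the $s_i$, so the sheafification of the presheaf image could a priori be strictly larger than $\mathscr{G}_G^\omega|_U$.
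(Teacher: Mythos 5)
The paper offers no proof of this proposition: it is imported verbatim from \cite{Lewis}, so there is no internal argument to compare yours against. Judged on its own terms, your reduction is a formally valid implication: granting the Oka--Cartan coherence of $\mathscr{C}_M^\omega$ and Proposition \ref{pro:2.13}, the sheaf $\mathscr{G}_G^\omega$ is indeed locally the image of a morphism $(\mathscr{C}_M^\omega|_U)^l\to\mathscr{G}_{TM}^\omega|_U$ between coherent sheaves, images of such morphisms are coherent, and coherence is local. Your closing remark about why the presheaf image is already a sheaf identifies the right point of care.

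The genuine problem is circularity. Proposition \ref{pro:2.13} is not an independent, more elementary input: in \cite{Lewis} it is a \emph{consequence} of the coherence of $\mathscr{G}_G^\omega$, obtained by combining coherence with Cartan's Theorems A and B. The paper itself signals this ordering when it remarks that Theorem \ref{tm:2.3} ``is a consequence of Proposition \ref{prop:2.1} and Theorem \ref{tm:2.1}''; and Proposition \ref{pro:2.13} is strictly stronger than Theorem \ref{tm:2.3}(3), since it upgrades pointwise spanning to generation of the module $\Gamma^{\omega}(V;G)$ over \emph{every} open $V\subseteq U$. That upgrade is exactly where the depth lies: to write an arbitrary $\sigma\in\Gamma^{\omega}(V;G)$ as a $C^{\omega}(V)$-combination of $s_1|V,\dots,s_l|V$ one must pass from germ-level solvability to solvability over all of $V$, and the obstruction is a class in $H^1(V;\mathscr{K})$ for the relation sheaf $\mathscr{K}=\ker\psi$, whose vanishing requires $\mathscr{K}$ to be coherent --- i.e.\ requires the very coherence you are trying to prove. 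Nothing weaker than Proposition \ref{pro:2.13} would rescue your morphism $\psi$: the example in Section \ref{sec:2} with $g(p)=(p,p^2)$ and $h(p)=(p,p)$ shows that finitely many analytic sections spanning $G$ pointwise, as supplied by Theorem \ref{tm:2.3}(3), can generate a subsheaf strictly smaller than $\mathscr{G}_G^\omega$, so the image of $\psi$ built from such generators need not be all of $\mathscr{G}_G^\omega|_U$. A non-circular proof must establish local finite generation of the subsheaf $\mathscr{G}_G^\omega\subseteq\mathscr{G}_{TM}^\omega$ directly, using Noetherianity of the stalks of $(\mathscr{C}_{M}^\omega)^n$ and the Oka--Cartan machinery, and only afterwards deduce Proposition \ref{pro:2.13} and Theorem \ref{tm:2.3} as corollaries.
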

\begin{theorem}\cite{Lewis}(Cartan's Theorem A)\label{tm:2.1}
Let $M$ be a real analytic
manifold and let $\mathscr{F}$ be a coherent real analytic sheaf. Then, for $p \in M$, the $\mathscr{C}_{p,M}^{\omega}$-module
$\mathscr{F}_p$ is generated by germs of global sections of $\mathscr{F}$.
\end{theorem}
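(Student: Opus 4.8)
The plan is to prove Theorem \ref{tm:2.1} by the classical two-step route: first establish the cohomology vanishing statement (\emph{Cartan's Theorem B}), namely $H^q(M;\mathscr{F})=0$ for all $q\geq 1$ and every coherent real analytic sheaf $\mathscr{F}$, and then deduce Theorem A from it by a Nakayama-type argument. Since the real analytic category lacks the analytic tools available over $\mathbb{C}$, I would reduce to the holomorphic setting by complexification. A paracompact real analytic manifold $M$ admits a complexification $\tilde M$, a complex manifold in which $M$ sits as a maximally real submanifold, and $M$ possesses a fundamental system of open \emph{Stein} neighborhoods in $\tilde M$ (Bruhat--Whitney, Grauert). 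A coherent real analytic sheaf $\mathscr{F}$ — such as the sheaf $\mathscr{G}_G^{\omega}$ of Proposition \ref{prop:2.1}, though the theorem is stated for an arbitrary one — extends to a coherent holomorphic sheaf $\tilde{\mathscr{F}}$ on some such Stein neighborhood $\tilde U$, with $\tilde{\mathscr{F}}|_M=\mathscr{F}$ and, for each $p\in M$, a stalk isomorphism $\tilde{\mathscr{F}}_p\cong\mathscr{F}_p$ compatible with the identification of local rings $\mathscr{O}_{\tilde U,p}\cong\mathscr{C}^{\omega}_{p,M}$ (a real analytic germ extends uniquely to a holomorphic germ).

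The analytic heart of the argument is the holomorphic Theorem B on the Stein manifold $\tilde U$: $H^q(\tilde U;\tilde{\mathscr{F}})=0$ for every $q\geq 1$ and every coherent holomorphic sheaf $\tilde{\mathscr{F}}$. I would obtain this either through the Cartan--Serre sheaf machinery — Oka's coherence theorem for $\mathscr{O}$, Cartan's factorization lemma for holomorphic matrices, and the exhaustion of a Stein manifold by analytic polyhedra — or, more economically, through H\"ormander's $L^2$ estimates for $\bar\partial$ with plurisubharmonic weights, which kill the Dolbeault cohomology and, via a fine resolution of $\tilde{\mathscr{F}}$, yield the vanishing of the sheaf cohomology. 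This step is where all the difficulty lies; everything else is formal.

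Granting Theorem B on $\tilde U$, I would deduce Theorem A at a point $p\in M\subseteq\tilde U$ as follows. Let $\mathfrak{m}_p\subset\mathscr{O}_{\tilde U,p}$ be the maximal ideal. Form the coherent subsheaf $\mathscr{K}\subseteq\tilde{\mathscr{F}}$ that agrees with $\tilde{\mathscr{F}}$ away from $p$ and has stalk $\mathfrak{m}_p\tilde{\mathscr{F}}_p$ at $p$; this is the kernel of the surjection of $\tilde{\mathscr{F}}$ onto the skyscraper sheaf $\mathscr{Q}$ supported at $p$ with stalk $\tilde{\mathscr{F}}_p/\mathfrak{m}_p\tilde{\mathscr{F}}_p$, giving a short exact sequence $0\to\mathscr{K}\to\tilde{\mathscr{F}}\to\mathscr{Q}\to 0$. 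The associated long exact cohomology sequence produces $\Gamma(\tilde U;\tilde{\mathscr{F}})\to\tilde{\mathscr{F}}_p/\mathfrak{m}_p\tilde{\mathscr{F}}_p\to H^1(\tilde U;\mathscr{K})$, and the last term vanishes by Theorem B since $\mathscr{K}$ is coherent and $\tilde U$ is Stein. Hence global sections of $\tilde{\mathscr{F}}$ surject onto the finite-dimensional vector space $\tilde{\mathscr{F}}_p/\mathfrak{m}_p\tilde{\mathscr{F}}_p$. Because $\tilde{\mathscr{F}}_p$ is a finitely generated module over the Noetherian local ring $\mathscr{O}_{\tilde U,p}$, Nakayama's lemma then shows that the germs at $p$ of these global sections generate $\tilde{\mathscr{F}}_p$.

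It remains to descend to $M$. Restricting global holomorphic sections of $\tilde{\mathscr{F}}$ over $\tilde U$ to $M$ produces global real analytic sections of $\mathscr{F}=\tilde{\mathscr{F}}|_M$, and under the identifications $\tilde{\mathscr{F}}_p\cong\mathscr{F}_p$ and $\mathscr{O}_{\tilde U,p}\cong\mathscr{C}^{\omega}_{p,M}$ the generation of $\tilde{\mathscr{F}}_p$ transfers verbatim to generation of $\mathscr{F}_p$ by germs of global sections of $\mathscr{F}$, which is exactly the assertion. The main obstacle is thus the holomorphic vanishing theorem on the Stein complexification (equivalently, $\bar\partial$-solvability with estimates); the two foundational inputs that make the reduction legitimate — Oka coherence and the existence of a Stein complexification carrying a coherent extension of $\mathscr{F}$ — require care, and I would flag the construction and extension of $\tilde{\mathscr{F}}$ across the complexification as the subtle bookkeeping that justifies passing between the real analytic and holomorphic pictures.
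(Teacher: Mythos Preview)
The paper does not supply its own proof of this theorem: it is quoted verbatim from the reference \cite{Lewis} (and ultimately from Cartan \cite{Cartan}), and is used only as a black box to derive Theorem~\ref{tm:2.3} and to justify the sheaf-theoretic manoeuvres in Section~\ref{sec:4}. So there is no ``paper's proof'' to compare against.

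Your sketch is the classical route---complexify $M$ into a Stein neighbourhood (Grauert, Bruhat--Whitney), extend $\mathscr{F}$ to a coherent holomorphic sheaf, invoke the holomorphic Theorem~B, and then run the skyscraper/Nakayama argument to get Theorem~A---and it is essentially Cartan's original argument in \cite{Cartan}. The structure is sound and you correctly flag the extension of $\mathscr{F}$ to $\tilde{\mathscr{F}}$ as the delicate bookkeeping. One point to tighten: the identification you write as $\mathscr{O}_{\tilde U,p}\cong\mathscr{C}^{\omega}_{p,M}$ is not literally an isomorphism of rings, since the left-hand side is a $\mathbb{C}$-algebra and the right an $\mathbb{R}$-algebra; what holds is $\mathscr{O}_{\tilde U,p}\cong\mathscr{C}^{\omega}_{p,M}\otimes_{\mathbb{R}}\mathbb{C}$, with $\mathscr{C}^{\omega}_{p,M}$ recovered as the fixed points of the conjugation coming from the totally real embedding. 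Consequently, the global holomorphic sections you obtain generate $\tilde{\mathscr{F}}_p$ over $\mathscr{O}_{\tilde U,p}$, and a short additional argument (taking real and imaginary parts, or averaging under conjugation) is needed to produce \emph{real} analytic global sections of $\mathscr{F}$ whose germs generate $\mathscr{F}_p$ over $\mathscr{C}^{\omega}_{p,M}$. This is routine but should be said explicitly.
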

Theorem \ref{tm:2.3} in Section \ref{sec:2} is a consequence of Proposition \ref{prop:2.1} and Theorem \ref{tm:2.1}.
\begin{theorem}\cite{Lewis}(A consequence of Cartan's Theorem B)\label{tm:2.2}
If $M$ is a real analytic manifold, if $\mathscr{F}$ is a coherent sheaf of $\mathscr{C}_{M}^{\omega}$-modules, and if
$\mathscr{U}=(U_a)_{a\in A}$ is an open cover of $M$, then $H^1(\mathscr{U};\mathscr{F})=0$, where $H^1(\mathscr{U};\mathscr{F})$ is the first
cohomology group of $\mathscr{F}$ for the cover $\mathscr{U}$.
\end{theorem}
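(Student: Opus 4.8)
The plan is to deduce the stated vanishing of the \v{C}ech cohomology $H^1(\mathscr{U};\mathscr{F})$ for a fixed cover from the vanishing of the full sheaf cohomology $H^1(M;\mathscr{F})$, and in turn to obtain that vanishing by complexifying $M$ and invoking the classical holomorphic Cartan Theorem B on a Stein neighbourhood. First I would carry out the reduction from the cover-dependent statement to the sheaf-theoretic one. For any open cover $\mathscr{U}=(U_a)_{a\in A}$ of $M$ there is a canonical comparison homomorphism $H^1(\mathscr{U};\mathscr{F})\rightarrow H^1(M;\mathscr{F})$, and it is a standard fact of sheaf theory that this map is injective in degree one. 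This is the edge homomorphism of the \v{C}ech-to-derived-functor spectral sequence $E_2^{p,q}=\check{H}^p(\mathscr{U};\mathscr{H}^q(\mathscr{F}))\Rightarrow H^{p+q}(M;\mathscr{F})$: since $E_2^{1,0}$ admits no nonzero incoming or outgoing differentials it equals $E_\infty^{1,0}$, which is a subgroup of $H^1(M;\mathscr{F})$. Consequently, once $H^1(M;\mathscr{F})=0$ is known, one gets $H^1(\mathscr{U};\mathscr{F})=0$ for \emph{every} cover $\mathscr{U}$, which is precisely the claimed conclusion.

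Next I would establish $H^1(M;\mathscr{F})=0$, which carries the genuine content and is the real analytic form of Cartan's Theorem B. My approach is complexification. Because $M$ is a paracompact real analytic manifold, it embeds as the set of real points of a complex analytic manifold $\tilde{M}$, and by Grauert's theorem $M$ admits in $\tilde{M}$ a fundamental system of open Stein neighbourhoods $(\tilde{U}_n)_{n\in\mathbb{N}}$ with $\tilde{U}_{n+1}\subseteq\tilde{U}_n$ and $\bigcap_n\tilde{U}_n=M$. Using Proposition \ref{prop:2.1} together with the coherence machinery, the coherent real analytic sheaf $\mathscr{F}$ extends, on a sufficiently small such neighbourhood, to a coherent holomorphic sheaf $\tilde{\mathscr{F}}$ of $\mathscr{O}_{\tilde{M}}$-modules. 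This extension step is exactly where Cartan's passage from coherent real analytic sheaves to coherent holomorphic sheaves enters.

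Then I would apply the classical holomorphic Cartan Theorem B on each Stein neighbourhood to conclude $H^1(\tilde{U}_n;\tilde{\mathscr{F}})=0$ for every $n$. Since $M$ is the decreasing intersection of the $\tilde{U}_n$ and, for a paracompact subspace, sheaf cohomology is continuous with respect to the cofinal system of neighbourhoods, one obtains $H^1(M;\mathscr{F})=\varinjlim_n H^1(\tilde{U}_n;\tilde{\mathscr{F}})=0$. Combined with the injectivity reduction of the first paragraph, this yields $H^1(\mathscr{U};\mathscr{F})=0$ for all $\mathscr{U}$ and completes the argument.

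The hard part will be the complexification step: producing a Stein complexification together with a coherent holomorphic extension $\tilde{\mathscr{F}}$ of $\mathscr{F}$, and correctly identifying $H^1(M;\mathscr{F})$ with the direct limit of the holomorphic cohomologies over the shrinking Stein neighbourhoods. These are precisely the technical ingredients underlying Cartan's extension of Theorems A and B to the real analytic category; in practice I would invoke \cite{Cartan,cartan,Lewis} for this passage rather than reprove it, and concentrate the explicit argument on the two bookkeeping reductions above, namely the injectivity of $H^1(\mathscr{U};\mathscr{F})\hookrightarrow H^1(M;\mathscr{F})$ and the limit identification $H^1(M;\mathscr{F})=\varinjlim_n H^1(\tilde{U}_n;\tilde{\mathscr{F}})$.
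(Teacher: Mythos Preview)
The paper does not prove this theorem; it is quoted verbatim from \cite{Lewis} as a black-box input to the proof of Theorem~\ref{tm:4.3}, so there is no ``paper's own proof'' to compare against. Your outline is essentially the standard route by which the real analytic Cartan Theorem~B is established in \cite{Cartan} and recounted in \cite{Lewis}: reduce the cover-dependent \v{C}ech statement to the vanishing of sheaf cohomology $H^1(M;\mathscr{F})$, then obtain the latter by Grauert's Stein complexification and the holomorphic Theorem~B. Both of your bookkeeping reductions are correct as stated: the spectral-sequence argument for the injectivity $H^1(\mathscr{U};\mathscr{F})\hookrightarrow H^1(M;\mathscr{F})$ is valid for any sheaf on any space, and the limit identification is the standard continuity property for cohomology of a closed subspace over a cofinal system of paracompact neighbourhoods.

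One point deserves a word of caution. The identification $H^1(M;\mathscr{F})\cong\varinjlim_n H^1(\tilde{U}_n;\tilde{\mathscr{F}})$ is not simply ``restriction of $\tilde{\mathscr{F}}$ to $M$ equals $\mathscr{F}$'' at the level of sheaves of modules, because $\tilde{\mathscr{F}}$ is an $\mathscr{O}_{\tilde{M}}$-module while $\mathscr{F}$ is a $\mathscr{C}_M^\omega$-module; one must use that $\mathscr{C}^\omega_{M,p}$ is the direct limit of the rings $\mathscr{O}_{\tilde{M}}(V)$ over complex neighbourhoods $V$ of $p$, and that this identification is compatible with the coherent extension. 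You already flag this as the delicate step and propose to cite \cite{Cartan,Lewis} for it, which is exactly what the paper itself does. So your proposal is sound and, in effect, reconstructs the argument that the cited references contain.
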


Proof of Theorem \ref{tm:4.3}.
We only need to prove the "if" part. Since $D$ is locally controlled invariant for $\Delta$,  we know that $T\Phi(G)$ is a well-defined $C^{\omega}$-generalized  distribution of $M/\mathfrak{F}$ as claimed in Theorem \ref{tm:4.1}. Given $p \in M$ and $v \in G_p$,  it follows from Theorem \ref{tm:2.3} that there exists $t \in \Gamma^{\omega}(T\Phi(G))$ such that $t(\Phi(p))=T_{p}\Phi(v)$. We claim that there exists
$\hat s\in \Gamma^{\omega}(G)$ such that $T\Phi(\hat s)=t$.  By using similar arguments as given in Theorem \ref{tm:4.1}, we know that  for each $q \in M$, there exists $X_q \in \Gamma^{\omega}(U_q;G)$ such that $T\Phi(X_q)=t|\Phi(U_q)$, where $U_q$ is a local neighborhood  of $q$. Then we get an open covering $\{U_q\}$ of $M$.
If $U_x \cap U_y \neq 0, x, y \in M$, consider
\[X_{xy}=X_x|U_x \cap U_y-X_y|U_x \cap U_y.\]
Since $T\Phi(X_x|U_x \cap U_y-X_y|U_x \cap U_y)=0$, it follows that
\[X_x|U_x \cap U_y-X_y|U_x \cap U_y \in \Gamma^{\omega}(U_x \cap U_y; D)\cap \Gamma^{\omega}(U_x \cap U_y; G).\]
On the other hand, since
$\mathscr{G}^{\omega}_D$ and $\mathscr{G}^{\omega}_G$ are both coherent real analytic sheaves, it follows that $\mathscr{G}^{\omega}_D \cap \mathscr{G}^{\omega}_G$ is a coherent real analytic sheaf.
Then according to Theorem \ref{tm:2.2}, we have
 \[\check{\mathrm{H}}^1(\mathscr{U}; \mathscr{G}^{\omega}_D \cap \mathscr{G}^{\omega}_G)=0,\]
where $\check{\mathrm{H}}^1(\mathscr{U}; \mathscr{G}^{\omega}_D \cap \mathscr{G}^{\omega}_G)$ is the first cohomology group of the sheaf $\mathscr{G}^{\omega}_D \cap \mathscr{G}^{\omega}_G$ for the cover $\{U_q\}$.
Then it follows  from   $(X_{xy})_{x,y \in M} \in \check{\mathrm{Z}}^1(\mathscr{U};  \mathscr{G}^{\omega}_{D}\cap  \mathscr{G}^{\omega}_{G})$ that there exist $(\hat {X}_x)_{x\in M}$, where $\hat{X}_x \in (\mathscr{G}^{\omega}_D \cap \mathscr{G}^{\omega}_G)(U_x)$ such that $X_{xy}=\hat{X}_y-\hat{X}_x$ for $x, y \in M, U_x \cap U_y \neq 0$.\\
Let \[\hat s_x=X_x+\hat{X}_x, x \in M.\]
We have $\hat s_x|U_x \cap U_y=\hat s_y|U_x \cap U_y$. Hence we get a $C^\omega$-vector field $\hat s \in \Gamma^{\omega}(G)$ on $M$ such that $T\Phi(\hat s)=t$. This completes the proof of the claim.

Since $T_{p}\Phi(v-\hat s(p))=0$, we have $v-\hat s(p)\in D_p\cap G_p$. Since $G \cap D$ is  a $C^{\omega}$-generalized distribution of $M$, it follows from Theorem \ref{tm:2.3} that there exists a vector field $Y \in \Gamma^{\omega}(G \cap D)$, such that $Y(p)=v-\hat s(p)$. Now consider the vector field
$Y+\hat s\in \Gamma^{\omega}(G)$, we have $T\Phi(Y+\hat s)=t$ which yields that $[Y+\hat s, \Gamma^{\omega}(D)]\subseteq \Gamma^{\omega}(D)$.
Besides, $(Y+\hat s)(p)=v$. Hence \eqref{eq:1.7b} follows immediately since $p \in M$ and $v\in G_p$ are arbitrary.

To prove \eqref{eq:1.7a}, first we claim that there exists $\bar t \in \Gamma^{\omega}(T\Phi(\Delta))$.
Since for each $\bar p \in M/\mathfrak{F}$, there exist a local neighborhood $W_{\bar p}$ of $\bar p$
and $t_{\bar p} \in  \Gamma^{\omega}(W_{\bar p}; T\Phi(\Delta))$,  we get an open covering $\{W_{\bar p}\}$ of $ M/\mathfrak{F}$. If $W_{\bar x} \cap W_{\bar y} \neq 0, \bar x, \bar y \in M/\mathfrak{F}$, consider $t_{\bar x \bar y}=t_{\bar x}|W_{\bar x} \cap W_{\bar y}-t_{\bar y}|W_{\bar x} \cap W_{\bar y}$. It follows that $t_{\bar x\bar y}\in \Gamma^{\omega}(W_{\bar x} \cap W_{\bar y}; T\Phi(G))$. On the other hand, since
$\mathscr{G}^{\omega}_{T\Phi(G)}$ is a coherent real analytic sheaf, according to Theorem \ref{tm:2.2} we have  $\check{\mathrm{H}}^1(\mathscr{W}; \mathscr{G}^{\omega}_{T\Phi(G)})=0$. Then it follows  from   $(t_{\bar x\bar y})_{\bar x,\bar y \in M/\mathfrak{F}} \in \check{\mathrm{Z}}^1(\mathscr{W}; \mathscr{G}^{\omega}_{T\Phi(G)})$ that there exist $(\hat {t}_{\bar x})_{\bar x\in M/\mathfrak{F}}$, where $\hat{t}_{\bar x} \in \mathscr{G}^{\omega}_{T\Phi(G)}(W_{\bar x})$ such that $t_{\bar x \bar y}=\hat{t}_{\bar y}-\hat{t}_{\bar x}$ for $\bar x, \bar y \in M/\mathfrak{F}, W_{\bar x} \cap W_{\bar y} \neq 0$. Let $\bar t_{\bar x}=t_{\bar x}+\hat{t}_{\bar x}, \bar x \in M/\mathfrak{F}$.  We have $\bar t_{\bar x}|W_{\bar x} \cap W_{\bar y}=\bar t_{\bar y}|W_{\bar x} \cap W_{\bar y}$. Hence we get a $C^\omega$-vector field $\bar t \in \Gamma^{\omega}(T\Phi(\Delta))$.

Since $(G+D)/D$ is regular, for each $p \in M$, there exist  a neighborhood $U$ of $p$ and $f_p \in \Gamma^{\omega}(U; \Delta)$, such that $T\Phi(f_p)=\bar t|\Phi(U)$. By using similar arguments as given above, we know there exists $f \in \Gamma^{\omega}(\Delta)$, such that $T\Phi(f)=\bar t$, which yields that $[f, \Gamma^{\omega}(D)] \subseteq \Gamma^{\omega}(D)$. This completes the proof. \qed

A special case that doesn't require the condition of paracompact Hausdorff manifold is the trivial case presented as a main result in \cite{1}, where for each $p \in M$, $T_p\Phi: G(p) \rightarrow T_p\Phi(G(p))$ is one-to-one.
\begin{proposition}\cite{1}\label{pro:4.6}
Consider the driftless control system
\begin{equation}\label{eq:4.3}
\dot x=\sum_{i=1}^m g_i(x)u_i,
\end{equation}
on a $C^r$-manifold $M$ (not necessary paracompact and Hausdorff), where the vector fields $g_1, \cdots, g_m$ are linearly independent on $M$. Let $D$ be an involutive regular $C^r$-distribution of $M$ such that $M/\mathfrak{F}$ is a $C^r$-manifold (not necessary paracompact and Hausdorff). Assume that $G\cap D=0$. Assume also $D$ is locally controlled invariant for \eqref{eq:4.3} at each $p\in M$. Then $D$ is globally controlled invariant for \eqref{eq:4.3} if and only if $T\Phi(G)$ is trivial, where $\Phi: M \rightarrow M/\mathfrak{F}$ is the projection.
\end{proposition}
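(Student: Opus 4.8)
The plan is to reduce both implications to a single projectability principle for the foliation $\mathfrak{F}$, and to extract everything else from the hypothesis $G\cap D=0$. First I would record that, since \eqref{eq:4.3} is driftless, $\Delta=G$ and taking $\alpha=0$ gives $\hat f=0$; hence global (resp.\ local at $p$) controlled invariance of $D$ for \eqref{eq:4.3} is equivalent to the existence of global (resp.\ local near $p$) generators $\hat g_1,\dots,\hat g_m$ of $G$ with $[\hat g_i,\Gamma^r(D)]\subseteq\Gamma^r(D)$; since $g_1,\dots,g_m$ are linearly independent on $M$ and $\beta\in GL(m,\mathbb{R})$, any such tuple is automatically a frame of $G$, and conversely from such a frame the feedback is recovered via $\hat g_i=\sum_j\beta_{ij}g_j$. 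Next I would use $G\cap D=0$: since $\ker T_p\Phi=D_p$, the map $T_p\Phi|_{G_p}$ is injective for every $p$, so $T\Phi$ restricts to a $C^r$ vector-bundle monomorphism $G\hookrightarrow\Phi^*T(M/\mathfrak{F})$; combined with local controlled invariance at every point and with the fact that $\Phi$ is an open $C^r$-submersion whose fibres (the leaves of $\mathfrak{F}$) are connected, this shows that $T\Phi(G)$ is a well-defined rank-$m$ $C^r$-subbundle of $T(M/\mathfrak{F})$ and that $T\Phi|_G:G\to\Phi^*T\Phi(G)$ is a $C^r$ vector-bundle isomorphism.

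The technical core I would isolate as a lemma: a $C^r$ vector field $X$ on an open $U\subseteq M$ satisfies $[X,\Gamma^r(U;D)]\subseteq\Gamma^r(U;D)$ if and only if there is a unique $C^r$ vector field $Y$ on the open set $\Phi(U)$ with $T\Phi\circ X=Y\circ\Phi$. In a flat chart $(U,\phi)$ for $D$ with $D=\text{span}\{\partial/\partial x_1,\dots,\partial/\partial x_k\}$, the condition $[X,\partial/\partial x_j]\in\Gamma^r(U;D)$ for $j\le k$ says exactly that $\partial X^l/\partial x_j=0$ for $l>k$, i.e.\ the transverse components of $X$ are independent of the leaf coordinates; connectedness of the leaves of $\mathfrak{F}$ then allows the locally defined push-downs to be glued into a global $Y$. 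Applied to the local frames $\hat g_i$ supplied by local controlled invariance, this lemma is precisely what gives the subbundle claim of the previous paragraph. With this in place the two implications are short. For the ``only if'' part: given a global frame $\hat g_1,\dots,\hat g_m$ of $G$ with $[\hat g_i,\Gamma^r(D)]\subseteq\Gamma^r(D)$, push it down by the lemma to $C^r$ vector fields $Y_1,\dots,Y_m$ on $M/\mathfrak{F}$ with $Y_i\circ\Phi=T\Phi\circ\hat g_i$; injectivity of $T_p\Phi|_{G_p}$ forces $Y_1(\bar p),\dots,Y_m(\bar p)$ to be linearly independent for every $\bar p=\Phi(p)$, and since they span the rank-$m$ bundle $T\Phi(G)$ the tuple $(Y_1,\dots,Y_m)$ is a global frame, so $T\Phi(G)$ is trivial. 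For the ``if'' part: given a global frame $Y_1,\dots,Y_m$ of $T\Phi(G)$, set $\hat g_i:=(T\Phi|_G)^{-1}(Y_i\circ\Phi)\in\Gamma^r(G)$, which is $C^r$ exactly because $T\Phi|_G$ is a $C^r$ bundle isomorphism; the $\hat g_i$ form a global frame of $G$ (the isomorphism preserves fibrewise linear independence) and $T\Phi\circ\hat g_i=Y_i\circ\Phi$ is projectable, so $[\hat g_i,\Gamma^r(D)]\subseteq\Gamma^r(D)$ by the lemma; finally $\alpha=0$ and $\beta$ defined by $\hat g_i=\sum_j\beta_{ij}g_j$ (so that $\beta\in C^r(M;GL(m,\mathbb{R}))$ and $\hat f=0$) realize global controlled invariance.

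I expect the main obstacle to be the first paragraph, that is, proving $T\Phi(G)$ is an honest $C^r$-subbundle of $T(M/\mathfrak{F})$ and that $T\Phi|_G$ is a bundle isomorphism \emph{without} paracompactness: in contrast to Theorems \ref{tm:4.1} and \ref{tm:4.3}, here one cannot invoke partitions of unity, Theorem \ref{tm:2.4}, or Cartan's theorems, so the whole argument must be run through the local normal form of $\mathfrak{F}$ and connectedness of its leaves. The triviality hypothesis on $T\Phi(G)$ is then exactly the obstruction to upgrading the pointwise isomorphisms $T_p\Phi|_{G_p}$, which always exist locally by local controlled invariance, into a globally consistent frame.
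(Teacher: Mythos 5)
The paper itself contains no proof of Proposition~\ref{pro:4.6}: it is quoted from \cite{1} without argument, and the surrounding discussion only explains why this special case escapes the paracompactness/Hausdorff hypotheses needed in Theorems~\ref{tm:4.1} and~\ref{tm:4.3}. So there is nothing in the paper to compare you against line by line; judged on its own, your argument is correct and is precisely the mechanism the paper alludes to. The reduction of controlled invariance of the driftless system to the existence of a (global, resp.\ local) invariant frame of $G$ via $\alpha=0$ is sound; the identification of $[\,\cdot\,,\Gamma^r(D)]\subseteq\Gamma^r(D)$ with $\Phi$-projectability is the right key lemma; and the fibrewise injectivity of $T\Phi|_{G}$ coming from $G\cap D=0$ is exactly what replaces the partition-of-unity, Theorem~\ref{tm:2.4}, and Cartan's Theorem~B arguments of the general case, so that triviality of $T\Phi(G)$ becomes the precise obstruction. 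Your closing remark that the absence of a drift is what removes the remaining need for paracompactness matches the paper's own comment following the proposition.

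One caveat you should repair: your projectability lemma, as stated for an \emph{arbitrary} open $U\subseteq M$, is false in the forward direction. If the fibres of $\Phi|_U$ are disconnected (a leaf may meet $U$ in several plaques), the locally defined push-downs need not agree over a common point of $\Phi(U)$, so no single $Y$ on $\Phi(U)$ exists; a two-component $U$ in $\mathbb{R}^2$ foliated by horizontal lines already gives a counterexample. This does not damage the proof, because you only need the lemma where it is true: for $U=M$, whose fibres are the connected leaves (this handles the ``only if'' direction), and locally, where you should either work on the local leaf space of a flat chart or define the local frames of $T\Phi(G)$ as $(T\Phi\circ\hat g_i)\circ s$ for a local section $s$ of the submersion $\Phi$ --- after first establishing, by the connectedness-of-leaves argument used in the proof of Theorem~\ref{tm:4.1}, that the subspace $T_p\Phi(G_p)$ depends only on $\Phi(p)$. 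With the lemma restated in that form, the proof is complete.
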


However, if there exists a drift term $f$ in \eqref{eq:4.3}, then the condition of Hausdorff and paracompact manifold is still needed. This is because if $M/\mathfrak{F}$
is not assumed to be Hausdorff and paracompact, then from conditions given in Proposition \ref{pro:4.6} we can not ensure the existence of $\bar t \in \Gamma^{r}(T\Phi(\Delta))$, which is necessary for \eqref{eq:1.6a}.

The $C^{\infty}$-part of the following theorem on global controlled invariance for affine control systems is taken from \cite{1}.
\begin{theorem}
Consider the affine control system
\begin{equation}\label{eq:4.4}
\dot x=f(x)+\sum_{i=1}^m g_i(x)u_i,
\end{equation}
on a $C^r$-manifold $M$, where $f, g_1, \cdots, g_m$ are $C^r$-vector fields on $M$ and $g_1, \cdots, g_m$ are linearly independent. Let $D$ be an involutive regular $C^r$-distribution of $M$ such that $M/\mathfrak{F}$ is a $C^r$-manifold. Assume that $G\cap D=0$. Assume also $D$ is locally controlled invariant for \eqref{eq:4.4}. Then $D$ is globally controlled invariant for \eqref{eq:4.4} if and only if $T\Phi(G)$ is trivial, where $\Phi: M \rightarrow M/\mathfrak{F}$ is the projection.
\end{theorem}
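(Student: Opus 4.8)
The plan is to treat the two implications separately, noting that the drift is irrelevant to the ``only if'' part (which reduces to Proposition~\ref{pro:4.6}), while the whole difficulty of the ``if'' part is concentrated in constructing the drift of the feedback, for which one reuses the machinery of the proof of Theorem~\ref{tm:4.3}. Write $\Phi\colon M\to M/\mathfrak{F}$ for the projection and recall $\ker T_p\Phi=D_p$. The hypothesis $G\cap D=0$ will be used throughout in the form that $T_p\Phi\vert_{G_p}\colon G_p\to T_{\Phi(p)}(M/\mathfrak{F})$ is injective; consequently a local section of $G$, and likewise of $\Delta$ (two local sections of $\Delta$ differ by a local section of $G=L(\Delta)$), is \emph{uniquely} determined by its image under $T\Phi$, so local sections of $G$ or $\Delta$ with a prescribed $T\Phi$-image glue automatically, with no partition of unity. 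For the ``only if'' part, let $(\alpha,\beta)$ be global feedback with $\hat g_i=\sum_j\beta_{ij}g_j$ and $[\hat g_i,\Gamma^r(D)]\subseteq\Gamma^r(D)$; since $\beta(p)\in GL(m,\mathbb{R})$ the $\hat g_i$ remain a pointwise frame for $G$, and from the connection $\nabla$ of \eqref{eq:2.1} one gets $\nabla_X\pi(\hat g_i)=0$, i.e.\ $\pi(\hat g_i)$ is $\nabla$-parallel; flatness of $\nabla$ and path-independence of parallel transport within leaves then show each $\hat g_i$ descends to a $C^r$ field $Z_i$ on $M/\mathfrak{F}$, and injectivity of $T_p\Phi\vert_{G_p}$ makes $Z_1,\dots,Z_m$ a global frame for $T\Phi(G)$, which is thus trivial. (This is exactly the argument of Proposition~\ref{pro:4.6}, to which one may also simply appeal after observing that global controlled invariance of \eqref{eq:4.4} implies that of \eqref{eq:4.3}.)

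For the ``if'' part, fix a global frame $Y_1,\dots,Y_m\in\Gamma^r(T\Phi(G))$. Because $G\cap D=0$ and the $g_i$ are linearly independent, $(G+D)/D$ is a regular subbundle of $TM/D$ of rank $m$, so Corollary~\ref{coro:3.7} provides, around every point, sections of $G$ whose $T\Phi$-images equal the $Y_i$; by the uniqueness above these glue to global fields $X_1,\dots,X_m\in\Gamma^r(G)$ with $T\Phi(X_i)=Y_i$. Since $G\cap D=0$ the $X_i$ again form a pointwise frame for $G$, and $T\Phi(X_i)=Y_i$ gives $[X_i,\Gamma^r(D)]\subseteq\Gamma^r(D)$; writing $X_i=\sum_j\beta_{ij}g_j$ yields a $C^r$ map $\beta\colon M\to GL(m,\mathbb{R})$, and setting $\hat g_i=X_i$ verifies \eqref{eq:1.6b}.

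It remains to produce the drift, and this is the main obstacle. As in the proof of Theorem~\ref{tm:4.3}, $T\Phi(\Delta)$ is a well-defined $C^r$ affine distribution of $M/\mathfrak{F}$ with linear part $T\Phi(G)$, and Corollary~\ref{coro:3.7} supplies local sections of it; one patches these into a global section $\bar t\in\Gamma^r(T\Phi(\Delta))$ using a partition of unity when $r=\infty$, and the vanishing of the relevant $\check{\mathrm H}^1$ of the coherent analytic sheaf $\mathscr{G}^{\omega}_{T\Phi(G)}$ (Cartan's Theorem~B, Theorem~\ref{tm:2.2}) when $r=\omega$. Lifting $\bar t$ by Corollary~\ref{coro:3.7} and gluing by the same uniqueness principle gives $\hat f\in\Gamma^r(\Delta)$ with $T\Phi(\hat f)=\bar t$, hence $[\hat f,\Gamma^r(D)]\subseteq\Gamma^r(D)$; writing $\hat f=f+\sum_j\alpha_j g_j$ with $\alpha_j\in C^r(M)$ produces the remaining feedback component and establishes \eqref{eq:1.6a}, so $(\alpha,\beta)$ is the desired global feedback. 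The point to emphasize is the asymmetry between the two parts: the lifts of the control fields are forced by $G\cap D=0$ and therefore cost nothing, but the drift genuinely requires a global section of the \emph{affine} bundle $T\Phi(\Delta)$, and it is only here that paracompactness and Hausdorffness of $M/\mathfrak{F}$ are invoked --- which is exactly why the driftless Proposition~\ref{pro:4.6} can dispense with those hypotheses while the present theorem cannot.
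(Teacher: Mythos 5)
Your proof is correct, but it takes a genuinely different route from the paper's. The paper disposes of this theorem in three lines: the smooth case is delegated wholesale to Propositions 5.1 and 5.2 of \cite{1}, and the real analytic case is obtained by observing that the Grauert--Morrey embedding theorem \cite{Grauert} furnishes a real analytic Riemannian metric on $M/\mathfrak{F}$, so that the metric-based construction of the feedback in \cite{1} goes through verbatim with $r=\omega$. You instead give a self-contained argument: for the control vector fields you exploit $G\cap D=0$ to make $T_p\Phi|_{G_p}$ injective, so that local lifts of a global frame of $T\Phi(G)$ (supplied by local controlled invariance via \eqref{eq:3.23}) are unique and glue with no partition of unity and no metric; for the drift you recycle the machinery of Theorem \ref{tm:4.3}, patching local sections of the affine bundle $T\Phi(\Delta)$ by a partition of unity on $M/\mathfrak{F}$ when $r=\infty$ and by the vanishing of $\check{\mathrm{H}}^1(\mathscr{W};\mathscr{G}^{\omega}_{T\Phi(G)})$ (Theorem \ref{tm:2.2}) when $r=\omega$, and then again use $G\cap D=0$ to glue the unique local lifts of $\bar t$ into $\hat f\in\Gamma^r(\Delta)$. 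What your approach buys is independence from the details of \cite{1} and a uniform treatment of both regularity classes by the same cohomological mechanism already set up for Theorem \ref{tm:4.3}; what the paper's approach buys is brevity and, in the analytic case, avoidance of sheaf theory in favour of a single appeal to Grauert--Morrey. Your closing observation --- that only the drift genuinely needs paracompactness and Hausdorffness of $M/\mathfrak{F}$, which is why the driftless Proposition \ref{pro:4.6} can drop them --- is exactly the point the paper makes in the paragraph between Proposition \ref{pro:4.6} and the theorem, and your use of triviality of $T\Phi(G)$ and of $G\cap D=0$ in establishing \eqref{eq:1.6a} matches the paper's remark that the proof of \eqref{eq:1.6a} in \cite{1} relies on precisely these assumptions.
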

\begin{proof}
The proof for $r=\infty$ is given in \cite{1}. Let $r=\omega$. Since $M/\mathfrak{F}$ is a paracompact, Hausdorff $C^\omega$ -manifold, it follows from the Grauert-Morrey embedding theorem \cite{Grauert} that there exists real analytic Riemannian metric on $M/\mathfrak{F}$. Hence the result follows by replacing $r=\infty$ with $r=\omega$ in the proof of Proposition 5.1 and Proposition 5.2 in \cite{1}. \qed
\end{proof}
\begin{remark}
It should be pointed out that the proof for \eqref{eq:1.6a} in Proposition 5.1 in \cite{1} relies on the assumptions that  $G, T\Phi(G)$ are trivial and $G\cap D=0$.
\end{remark}
\subsection{Manifolds with a symmetry Lie group action}
Let $P$ be a smooth manifold. Let
\begin{equation}
\Phi: G\times P\rightarrow P: (g,p)\rightarrow \Phi_g(p)=gp
\end{equation}
be a proper action of a connected Lie group $G$ on $P$ (see \cite{Sniatycki}). The isotropy group $G_p$ of a point $p \in P$ is
$G_p=\{g\in G | gp=p\}$.
\begin{definition}\cite{Sniatycki}
A slice through $p \in P$ for an action of $G$ on $P$ is a submanifold $S_p$ of $P$ containing $p$ such that:\\
1. $S_p$ is transverse and complementary to the orbit $Gp$ of $G$ through $p$. In other words,
\[T_pP=T_pS_p\oplus T_p(Gp).\]\\
2. For every $p'\in S_p$, the manifold $S_p$ is transverse to the orbit $Gp'$; that is,
\[T_{p'}P=T_{p'}S_p+T_{p'}(Gp').\]
3. $S_p$ is $G_p$-invariant.\\
4. Let $p'\in S_p$. If $gp' \in S_p$, then $g \in G_p$.
\end{definition}

The existence of a slice through $p \in P$ is ensured by the following result.
\begin{proposition}\cite{Sniatycki}
There is an open ball $B$ in $\text{hor}T_pP$ centred at $0$ such that $S_p=Exp_p(B)$
is a slice through $p$ for the action of $G$ on $P$, where $Exp_p(v)$ is the value
at 1 of the geodesics of $G$-invariant Riemannian metric originating from $p$ in the direction $v$.
Further, the set $GS_p=\{gq | g\in G\, \text{and}\, q\in S_p\}$
is a G-invariant open neighbourhood of p in P.
\end{proposition}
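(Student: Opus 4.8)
The plan is to realise $S_p$ as the $\mathrm{Exp}_p$-image of a small $G_p$-invariant ball in the horizontal subspace and then verify the four slice axioms, isolating the last one as the real difficulty. First I would fix a $G$-invariant Riemannian metric on $P$, which exists precisely because the action is proper (average an arbitrary metric against Haar measure using a cut-off supported near each orbit). Set $\text{hor}\,T_pP := (T_p(Gp))^{\perp}$, so that $T_pP = T_p(Gp)\oplus\text{hor}\,T_pP$ is an orthogonal splitting. Since $G_p$ fixes $p$ and acts by isometries, its isotropy representation preserves $T_p(Gp)$ and hence also $\text{hor}\,T_pP$, acting there orthogonally; consequently every metric ball $B\subseteq\text{hor}\,T_pP$ centred at $0$ is $G_p$-invariant. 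Choosing $B$ small enough that $\mathrm{Exp}_p$ restricts to an embedding on it and putting $S_p=\mathrm{Exp}_p(B)$ gives an embedded submanifold with $T_pS_p=\text{hor}\,T_pP$; this is exactly axiom~1. Axiom~3 is immediate from the equivariance identity $g\cdot\mathrm{Exp}_p(v)=\mathrm{Exp}_p((dg)_p v)$ for $g\in G_p$ together with the $G_p$-invariance of $B$. For axiom~2, consider the action map $\mu\colon G\times S_p\to P$, $\mu(g,s)=gs$; at $(e,p)$ its differential surjects onto $T_p(Gp)+T_pS_p=T_pP$, so $\mu$ is a submersion there, hence at all $(e,s)$ with $s$ near $p$ in $S_p$, and surjectivity of $d\mu_{(e,s)}$ is exactly the transversality statement $T_sP=T_s(Gs)+T_sS_p$; shrinking $B$ makes this hold throughout $S_p$.

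The main obstacle is axiom~4: if $p'\in S_p$ and $gp'\in S_p$ then $g\in G_p$. I would argue by contradiction, allowing $B$ to shrink: if the axiom failed for every ball of radius $1/n$, there would exist $v_n,w_n\in\text{hor}\,T_pP$ with $|v_n|,|w_n|\to 0$ and $g_n\in G\setminus G_p$ with $g_n\mathrm{Exp}_p(v_n)=\mathrm{Exp}_p(w_n)$. Since $\mathrm{Exp}_p(v_n)\to p$ and $\mathrm{Exp}_p(w_n)\to p$, properness of the action makes $\{g_n\}$ relatively compact; passing to a subsequence, $g_n\to g_0$ with $g_0p=p$, i.e.\ $g_0\in G_p$, and replacing $g_n$ by $g_0^{-1}g_n$ (using that $(dg_0^{-1})_p$ is an isometry preserving $\text{hor}\,T_pP$) I may assume $g_n\to e$. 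Pick a complement $\mathfrak m$ to $\mathfrak g_p$ in $\mathfrak g$ and use the local diffeomorphism $(\xi,h)\mapsto\exp_G(\xi)h$ near $(0,e)$ to write $g_n=\exp_G(\xi_n)h_n$ with $\xi_n\to 0$ in $\mathfrak m$ and $h_n\to e$ in $G_p$. Since $h_n\in G_p$ fixes $p$ and $(dh_n)_p$ preserves $\text{hor}\,T_pP$, one gets $g_n\mathrm{Exp}_p(v_n)=\exp_G(\xi_n)\mathrm{Exp}_p(\tilde v_n)$ with $\tilde v_n=(dh_n)_p v_n\in\text{hor}\,T_pP$, $|\tilde v_n|\to 0$. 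Now the map $\Psi(\xi,v)=\exp_G(\xi)\mathrm{Exp}_p(v)$ on $\mathfrak m\times\text{hor}\,T_pP$ has differential at $(0,0)$ equal to the infinitesimal action, which is an isomorphism $\mathfrak m\to T_p(Gp)$, on the first factor and the identity on $\text{hor}\,T_pP$, hence $\Psi$ is an isomorphism there and injective near $(0,0)$; but $\Psi(\xi_n,\tilde v_n)=\mathrm{Exp}_p(w_n)=\Psi(0,w_n)$, so for large $n$ we must have $\xi_n=0$, i.e.\ $g_n=h_n\in G_p$, a contradiction. This establishes axiom~4, so $S_p$ is a genuine slice.

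For the final assertion, $GS_p=\mu(G\times S_p)$ is $G$-invariant by construction, and since $\mu$ is a submersion at every $(e,s)$, $G$-equivariance of $\mu$ (via $\mu(ag,s)=a\cdot\mu(g,s)$, with $a\mapsto ag$ and the action of $a$ both diffeomorphisms) shows $\mu$ is a submersion at every $(g,s)$, hence an open map; therefore $GS_p$ is open, and it contains $p=\mu(e,p)$, giving the required $G$-invariant open neighbourhood. The steps on invariant metrics, the exponential map, and transversality being open are routine; the contradiction argument for axiom~4, which is where properness is genuinely used, is the part requiring care.
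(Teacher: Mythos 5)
The paper offers no proof of this proposition; it is imported directly from \cite{Sniatycki}, so the only benchmark is the standard slice-theorem argument of Palais, which is exactly what you have written: a $G$-invariant metric, $S_p=\mathrm{Exp}_p(B)$ for a small $G_p$-invariant horizontal ball, openness of the submersion condition for axiom~2, equivariance of $\mathrm{Exp}_p$ under isometries fixing $p$ for axiom~3, and the properness/sequential-compactness argument combined with the local product chart $(\xi,h)\mapsto\exp_G(\xi)h$ on $\mathfrak m\times G_p$ for axiom~4. All of these steps are correct, including the dimension count that makes $\Psi(\xi,v)=\exp_G(\xi)\cdot\mathrm{Exp}_p(v)$ a local diffeomorphism at $(0,0)$. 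The one compressed point is the existence of the $G$-invariant metric when $G$ is non-compact: this requires Palais's cut-off function $f\ge 0$ with $g\mapsto f(gx)$ compactly supported and $\int_G f(gx)\,dg=1$ for every $x$, rather than merely a cut-off ``near each orbit,'' but this is a standard fact for proper actions and does not affect the correctness of your proof.
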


A differential structure on a topological space $S$ is a family $C^{\infty}(S)$ of real-valued functions on $S$ satisfying the following conditions:\\
1. The family
\[\{f^{-1}(I)|f\in C^{\infty}(S) \,\text{and} \, I \, \text{is an open interval in} \, \mathbb{R}\}\]
is a subbasis for the topology of $S$.\\
2. If $f_1, \cdots, f_n \in C^{\infty}(S)$ and $F \in C^{\infty}(\mathbb{R}^n)$, then $F(f_1, \cdots, f_n) \in C^{\infty}(S)$.\\
3. If $f: S\rightarrow \mathbb{R}$ is a function such that, for every $x \in S$, there exist an open neighborhood $U$ of $x$,
and a function $f_x \in C^{\infty}(S)$ satisfying
\[f_x|U=f|U,\]
then $f \in C^{\infty}(S)$. Here, the subscript vertical bar $|$ denotes a restriction. A topological space S endowed with a differential
structure is called a differential space. A differential space $S$ is subcartesian if it is Hausdorff and every point $x \in S$ has a  neighbourhood $U$ diffeomorphic to a subset $V$ of $\mathbb{R}^n$.

For the orbit space $R=P/G$, let
\begin{equation}\label{eq:4.7}
C^{\infty}(R)=\{f:R\rightarrow \mathbb{R}|\rho^*f\in C^{\infty}(P)\},
\end{equation}
where $\rho: P\rightarrow R$ is the canonical projection (the orbit map). We have
\begin{proposition}\cite{Sniatycki}\label{prop:4.1}
The  topology of $R$ induced by the differential structure $C^{\infty}(R)$ coincides with the
quotient topology. Further, $R$ is a subcartesian space.
\end{proposition}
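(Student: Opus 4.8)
The plan is to prove the two assertions in turn, the whole argument resting on the slice theorem stated above and on the fact that a proper action has compact isotropy groups.

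\emph{Step 1: the two topologies agree.} First I would note that any $f\in C^\infty(R)$ is continuous for the quotient topology on $R$: by definition $\rho^*f\in C^\infty(P)$ is continuous and $\rho$ is a quotient map, so $f$ is continuous, whence every subbasic set $f^{-1}(I)$ is quotient-open. Thus the topology generated by $C^\infty(R)$ is contained in the quotient topology. For the reverse inclusion, fix $\bar p\in R$ and a quotient-open neighbourhood $W$ of $\bar p$, choose $p\in\rho^{-1}(\bar p)$, and apply the slice proposition to obtain a slice $S_p$ whose tube $GS_p$ is a $G$-invariant open neighbourhood of $p$; shrinking $B$, we may take $GS_p\subseteq\rho^{-1}(W)$. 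Conditions 3 and 4 in the definition of a slice identify $\rho(GS_p)$ with $S_p/G_p$, and since the action is proper, $G_p$ is compact. On the linear $G_p$-representation modelling $S_p$, the theorem of G.\ W.\ Schwarz says the algebra of smooth $G_p$-invariant functions is generated by finitely many polynomial invariants $\sigma_1,\dots,\sigma_n$; composing $\sigma=(\sigma_1,\dots,\sigma_n)$ with $Exp_p^{-1}$ and multiplying by a $G$-invariant bump function supported in $GS_p$ (built by averaging an ordinary bump function over the compact isotropy group and then over $G$ via the slice decomposition) produces finitely many functions in $C^\infty(R)$ whose common preimage of open intervals around their values at $\bar p$ is a neighbourhood of $\bar p$ contained in $W$. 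Hence the quotient topology is contained in the $C^\infty(R)$-topology, and the two coincide.

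\emph{Step 2: $R$ is subcartesian.} Hausdorffness follows from properness: the map $G\times P\to P\times P$, $(g,p)\mapsto(gp,p)$, is proper, hence closed, so the orbit relation is closed in $P\times P$; together with the fact that $\rho$ is an open map (because $\rho^{-1}(\rho(U))=\bigcup_{g\in G}gU$ is open for open $U$), this forces the quotient $R$ to be Hausdorff. For the local Euclidean model, use the tube again: a neighbourhood of $\bar p$ in $R$ is $\rho(GS_p)\cong S_p/G_p$, and the Hilbert map $\sigma=(\sigma_1,\dots,\sigma_n)$ descends to a homeomorphism of $S_p/G_p$ onto the subset $\sigma(Exp_p^{-1}(S_p))\subseteq\mathbb{R}^n$ — injective because $\sigma$ separates $G_p$-orbits, proper because $\sigma$ restricted to the slice is proper. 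By Step 1 together with Schwarz's theorem, this homeomorphism carries the differential structure that $R$ induces on $\rho(GS_p)$ to the differential structure of this subset of $\mathbb{R}^n$, so $\bar p$ has a neighbourhood diffeomorphic to a subset of $\mathbb{R}^n$. This is exactly the definition of a subcartesian space.

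\emph{Main obstacle.} The topology comparison and the openness/properness arguments for Hausdorffness are routine. The crux is the local analysis through the slice: one must pass from the $G$-action on the tube to the linear $G_p$-action on the slice, invoke Schwarz's theorem to know the invariants are finitely generated and that every smooth $G_p$-invariant function is a smooth function of them, and — the most delicate point — verify that the differential structure $R$ induces on $\rho(GS_p)$ is exactly the pullback of $C^\infty(\mathbb{R}^n)$ along the Hilbert map. This last verification needs both that every element of $C^\infty(R)$ restricts near $\bar p$ to a $G_p$-invariant smooth function on the slice and, conversely, that every such function extends to an element of $C^\infty(R)$, which is where the $G$-invariant bump-function and averaging construction is essential. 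Matching the differential structures, not merely the topologies, is the heart of the proof.
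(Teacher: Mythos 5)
The paper does not prove this proposition; it is quoted verbatim from the cited reference of \'{S}niatycki, so there is no in-paper argument to compare against. Your proof follows the standard route taken in that source: continuity of pullbacks gives one inclusion of topologies, $G$-invariant bump functions supported in tubes $GS_p$ give the other, properness (closed orbit relation plus openness of $\rho$) gives Hausdorffness, and Schwarz's theorem together with the Hilbert map on the slice identifies $\rho(GS_p)\cong S_p/G_p$ with a subset of $\mathbb{R}^n$ as a differential space --- and you correctly isolate the matching of differential structures, not just topologies, as the delicate point. The one phrase you should repair is ``averaging \dots over $G$ via the slice decomposition'': $G$ is not compact, so one does not average over it; one averages over the compact isotropy group $G_p$ on the slice and then \emph{extends by $G$-equivariance} over the tube (well-defined by slice conditions 3 and 4), checking that the extension by zero off a slightly smaller closed tube is smooth. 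This is exactly the construction the paper itself later borrows (the invariant function of Proposition 4.3.2 in \'{S}niatycki's book) in the proof of Theorem \ref{tm:4.4}, so the fix is cosmetic rather than structural.
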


For locally compact, second countable Hausdorff differential spaces, there exists  a partition of unity.
\begin{definition}
A  countable partition of unity on a differential space $S$ is a countable family of functions $\{f_i\} \in C^{\infty}(S)$
such that:\\
(a) The collection of their supports is locally finite.\\
(b) $f_i(x)\geq 0$ for each $i$ and each $x \in S$.\\
(c) $\sum_{i=1}^{\infty}f_i(x)=1$ for each $x \in S$.
\end{definition}
\begin{theorem}\cite{Sniatycki}\label{tm:4.6}
Let $S$ be a differential space with differential structure $C^{\infty}(S)$,
and let $\{U_\alpha\}$ be an open cover of $S$. If $S$ is Hausdorff, locally compact and second
countable, then there exists a countable partition of unity $\{f_i\} \in C^{\infty}(S)$, such that, for each $i$, there exists $\alpha$ such that the support of $f_i$ is contained in $U_\alpha$ and the support of each $f_i$ is compact.
\end{theorem}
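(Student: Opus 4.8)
The plan is to adapt the classical partition‑of‑unity construction for manifolds, replacing coordinate charts by the subcartesian structure of $S$ and Euclidean bump functions by functions imported into $C^{\infty}(S)$ through that structure and the locality axiom. First I would record that a Hausdorff, locally compact, second countable space is $\sigma$‑compact and admits a compact exhaustion $K_{1}\subseteq \mathrm{int}(K_{2})\subseteq K_{2}\subseteq \mathrm{int}(K_{3})\subseteq\cdots$ with $S=\bigcup_{n\geq 1}K_{n}$; putting $K_{m}=\emptyset$ for $m\leq 0$, set $A_{n}=K_{n}\setminus\mathrm{int}(K_{n-1})$ and $V_{n}=\mathrm{int}(K_{n+1})\setminus K_{n-2}$, so that each $A_{n}$ is compact, $A_{n}\subseteq V_{n}$, each $\overline{V_{n}}$ is compact, $\bigcup_{n}A_{n}=S$, and each point of $S$ lies in only finitely many $V_{n}$.

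The heart of the argument is a bump‑function lemma: for every $x\in S$ and every open neighbourhood $W$ of $x$ there is $\phi\in C^{\infty}(S)$ with $\phi\geq 0$, $\phi(x)>0$, and $\mathrm{supp}(\phi)$ a compact subset of $W$. To prove it I would use that $S$ is subcartesian to pick a relatively compact open neighbourhood $U$ of $x$ with $\overline{U}\subseteq W$ and a diffeomorphism $\iota$ of $U$ onto a subset $Z\subseteq\mathbb{R}^{N}$, together with a smaller relatively compact open $U'\ni x$ with $\overline{U'}\subseteq U$. Pulling the coordinate functions of $\mathbb{R}^{N}$ back through $\iota$ gives $h_{1},\dots,h_{N}\in C^{\infty}(U)$; choosing $\chi\in C^{\infty}(\mathbb{R}^{N})$ with $\chi(\iota(x))>0$ and $\mathrm{supp}(\chi)\cap Z\subseteq\iota(U')$, axiom (2) of a differential structure gives $\chi(h_{1},\dots,h_{N})\in C^{\infty}(U)$, and this function is supported in the compact set $\overline{U'}\subseteq U$. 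Extending it by zero on $S\setminus\overline{U'}$ produces a function on $S$ which near every point coincides with either $\chi(h_{1},\dots,h_{N})$ or the zero function, so axiom (3) (locality and gluing) puts it in $C^{\infty}(S)$; this is the desired $\phi$, with $\mathrm{supp}(\phi)\subseteq\overline{U'}\subseteq W$ compact.

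Given the lemma, for each $n\geq 1$ and each $x\in A_{n}$ I choose $\alpha$ with $x\in U_{\alpha}$ and a bump function $\phi_{x}$ with $\phi_{x}(x)>0$ and $\mathrm{supp}(\phi_{x})\subseteq U_{\alpha}\cap V_{n}$ compact. The open sets $\{\phi_{x}>0\}$ cover the compact set $A_{n}$, so finitely many of them suffice; enumerating the chosen bump functions over all $n$ yields a countable family $(\psi_{j})_{j}$ in $C^{\infty}(S)$ with every $\psi_{j}\geq 0$, every $\mathrm{supp}(\psi_{j})$ compact and contained in some $U_{\alpha}$, the supports locally finite (since each point has a neighbourhood meeting only finitely many $V_{n}$), and $\sigma:=\sum_{j}\psi_{j}>0$ on all of $S$ because every point lies in some $A_{n}$, hence in some $\{\phi_{x}>0\}$. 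Local finiteness makes $\sigma$ locally a finite sum of elements of $C^{\infty}(S)$, so $\sigma\in C^{\infty}(S)$ by axioms (2) and (3), and since $\sigma>0$ the same axioms give $1/\sigma\in C^{\infty}(S)$ (near each point $\sigma$ is bounded below by a positive constant, so compose it there with an element of $C^{\infty}(\mathbb{R})$ agreeing with $t\mapsto 1/t$ on the relevant interval). Then $f_{j}:=\psi_{j}/\sigma$ is the required countable partition of unity: each $f_{j}\in C^{\infty}(S)$ is nonnegative with compact support contained in some $U_{\alpha}$, the supports are locally finite, and $\sum_{j}f_{j}\equiv 1$.

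The step I expect to be the main obstacle is the bump‑function lemma, everything afterwards being routine exhaustion‑and‑shrinking bookkeeping. The point is that a priori $C^{\infty}(S)$ could contain very few functions, so one must genuinely exploit both hypotheses on $S$: the subcartesian property to import bump functions from Euclidean space, and axiom (3) of the differential structure to promote a locally defined smooth function to a global one; without the locality axiom the pulled‑back bump function would only be known to be smooth on the chart $U$.
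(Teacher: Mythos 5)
First, a point of reference: the paper does not prove Theorem \ref{tm:4.6} at all --- it is imported verbatim from \cite{Sniatycki} --- so there is no in-paper proof to compare against. Your overall architecture (compact exhaustion $K_n$, shells $A_n$ and collars $V_n$, finitely many bumps per shell, local finiteness, division by the locally finite sum $\sigma$) is the standard one and is executed correctly, including the points usually glossed over: that a locally finite sum of elements of $C^{\infty}(S)$ lies in $C^{\infty}(S)$ by axioms (2) and (3), and that $1/\sigma\in C^{\infty}(S)$ because $\sigma$ is continuous and locally bounded away from $0$.

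The one genuine problem is the bump-function lemma: you derive it from the subcartesian property of $S$, but the theorem is stated for an arbitrary differential space that is Hausdorff, locally compact and second countable; subcartesianness is not among its hypotheses (the paper invokes it only afterwards, when specializing to orbit spaces). As written your argument therefore proves a weaker statement. The concern you raise at the end --- that ``a priori $C^{\infty}(S)$ could contain very few functions'' --- is exactly what axiom (1) of a differential structure is designed to preclude, and it is axiom (1), not a chart, that should supply the bumps. Concretely: given $x$ and an open $W\ni x$, shrink to a relatively compact open $W'$ with $\overline{W'}\subseteq W$ (Hausdorff plus local compactness); by axiom (1) there are $f_{1},\dots,f_{n}\in C^{\infty}(S)$ and open intervals $I_{1},\dots,I_{n}$ with $x\in\bigcap_{j}f_{j}^{-1}(I_{j})\subseteq W'$; choose $a_{j}<f_{j}(x)<b_{j}$ with $[a_{j},b_{j}]\subseteq I_{j}$ and a nonnegative $F\in C^{\infty}(\mathbb{R}^{n})$ with $F(f_{1}(x),\dots,f_{n}(x))>0$ and $\mathrm{supp}(F)\subseteq\prod_{j}(a_{j},b_{j})$; then $\phi=F(f_{1},\dots,f_{n})\in C^{\infty}(S)$ by axiom (2), $\phi(x)>0$, and $\mathrm{supp}(\phi)\subseteq\bigcap_{j}f_{j}^{-1}([a_{j},b_{j}])$, a closed subset of $W'$ contained in the compact set $\overline{W'}$, hence compact and contained in $W$ (continuity of each $f_{j}$ also follows from axiom (1)). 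Substituting this lemma for yours, the remainder of your proof goes through unchanged and yields the theorem in its stated generality; your chart-based lemma is a correct alternative in the subcartesian setting that the paper actually uses.
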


Since a subcartesian space is always Hausdorff and locally compact, to ensure existence of  partition of unity, we need to assume that the subcartesian space is second countable.

Now consider the tangent distribution $\mathcal{V}$ spanned by the fundamental vector fields of the action
of $G$ on $P$. At every point $p \in P$ it is defined by
\begin{equation}\label{eq:4.5}
\mathcal{V}_p=\{\xi_P(p)|\xi \in \mathfrak{g}\}.
\end{equation}
$\mathcal{V}$ is a smooth involutive generalized distribution of $P$. We have
\begin{theorem}\label{tm:4.4}
Let $G$ be a connected Lie group acting in a proper way on a second countable smooth
manifold $P$. Let $\Delta$  be  a smooth generalized affine distribution of $P$ with associated smooth generalized distribution $L(\Delta)=\mathcal{J}$ satisfying that for each $p \in P$, vectors in $\mathcal{J}_p$ are $G_p$-invariant. Assume that  \[[\Gamma^{\infty}(\Delta), \Gamma^{\infty}(\mathcal{V})] \subseteq \Gamma^{\infty}(\mathcal{J}),\]
where $\mathcal{V}$ is defined by \eqref{eq:4.5}. Then $\mathcal{V}$ is globally controlled invariant for the generalized affine distribution $\Delta$.
\end{theorem}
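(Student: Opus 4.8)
The plan is to reduce the statement to the construction of globally defined \emph{$G$-invariant} vector fields. Precisely, I would produce a single $G$-invariant section $f\in\Gamma^{\infty}(\Delta)$ together with a family $\{g_i\}_{i\in I}$ of $G$-invariant sections of $\mathcal J$ whose values span $\mathcal J_q$ at every $q\in P$; then $f(q)+\mathrm{span}_{\mathbb R}\{g_i(q)\mid i\in I\}=f(q)+\mathcal J_q=\Delta_q$, so $\Delta$ is recovered. The bracket conditions of Definition \ref{def:1.4} are then automatic: if $X$ is $G$-invariant, its flow $\phi^X_t$ commutes with every $\Phi_g$, hence carries the orbit through $q$ onto the orbit through $\phi^X_t(q)$ and therefore preserves $\mathcal V$; consequently, for $W\in\Gamma^{\infty}(\mathcal V)$ the curve $t\mapsto(\phi^X_t)_{*}W$ takes values in sections of $\mathcal V$, and differentiating at $t=0$ gives $[X,W]\in\Gamma^{\infty}(\mathcal V)$. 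As a preliminary observation I would note that, since $\mathcal V$ is generated by the fundamental vector fields $\xi_P$, $\xi\in\mathfrak g$, and $G$ is connected, the hypothesis $[\Gamma^{\infty}(\Delta),\Gamma^{\infty}(\mathcal V)]\subseteq\Gamma^{\infty}(\mathcal J)$ forces $(\Phi_g)_{*}\Delta_q=\Delta_{gq}$ and $(\Phi_g)_{*}\mathcal J_q=\mathcal J_{gq}$ for all $g\in G$, i.e.\ $\Delta$ and $\mathcal J$ are $G$-invariant; this is used in the local step below.

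For the local step I would use the slice theorem. Fix $p\in P$; properness makes $G_p$ compact, and there are a slice $S_p$ and a $G$-invariant tube $GS_p=G\cdot S_p$ around $p$. Given a local $C^{\infty}$-section $s$ of $\Delta$ (or of $\mathcal J$) near $p$, shrink $S_p$ into its domain and average $s|S_p$ over $G_p$ with respect to normalized Haar measure; since each $\Phi_h$ ($h\in G_p$) preserves $S_p$ and preserves $\Delta$, $\mathcal J$, the average $\bar s$ is a $G_p$-equivariant $C^{\infty}$-section of $\Delta$ (resp.\ $\mathcal J$) on $S_p$. Extend it $G$-equivariantly by $\hat s(\Phi_a q):=(\Phi_a)_{*}\bar s(q)$ for $a\in G$, $q\in S_p$. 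This is well defined, because $\Phi_{a_1}q_1=\Phi_{a_2}q_2$ with $q_1,q_2\in S_p$ forces $a_2^{-1}a_1\in G_p$ by the last slice property, and $G_p$-equivariance of $\bar s$ then yields $(\Phi_{a_1})_{*}\bar s(q_1)=(\Phi_{a_2})_{*}\bar s(q_2)$; and it is smooth, because $(a,q)\mapsto\Phi_a q$ is a surjective submersion $G\times S_p\to GS_p$ (by the transversality in the slice definition) along whose fibres $(a,q)\mapsto(\Phi_a)_{*}\bar s(q)$ is constant, so $\hat s$ descends to a smooth section on $GS_p$; its values lie in $\Delta$ (resp.\ $\mathcal J$) by $G$-invariance. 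In particular, for each $v\in\mathcal J_p$ the hypothesis that $v$ is $G_p$-invariant means the averaging leaves the value at $p$ unchanged, so starting from a local section of $\mathcal J$ through $v$ we obtain a $G$-invariant $\hat g_{p,v}\in\Gamma^{\infty}(GS_p;\mathcal J)$ with $\hat g_{p,v}(p)=v$.

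To globalise, I would pass to the orbit space $R=P/G$ with the differential structure \eqref{eq:4.7}. By Proposition \ref{prop:4.1} it is subcartesian, hence Hausdorff and locally compact; it is also second countable, because the orbit map $\rho\colon P\to R$ is open and $P$ is second countable. Hence Theorem \ref{tm:4.6} supplies partitions of unity in $C^{\infty}(R)$ subordinate to the open cover $\{\rho(GS_p)\}_{p\in P}$ of $R$, and their pullbacks along $\rho$ are $G$-invariant $C^{\infty}$-functions on $P$ supported in tubes. Patching the tubewise $G$-invariant sections of $\Delta$ against such a partition of unity gives $f=\sum_j(\lambda_j\circ\rho)\,f_{p(j)}\in\Gamma^{\infty}(TP)$, which is $G$-invariant and satisfies $f(q)\in\Delta_q$ for every $q$, since $f(q)$ is an affine combination of points of the affine subspace $\Delta_q$. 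Likewise, multiplying each $\hat g_{p,v}$ by a pulled-back bump function obtained from Theorem \ref{tm:4.6} produces a global $G$-invariant section $g_{p,v}\in\Gamma^{\infty}(\mathcal J)$ with $g_{p,v}(p)=v$; taking $I=\{(p,v)\mid p\in P,\ v\in\mathcal J_p\}$, the values of the $g_{p,v}$ span $\mathcal J_q$ at every $q$. Combined with the bracket computation of the first paragraph, this produces $f$ and $\{g_i\}_{i\in I}$ meeting all requirements, so $\mathcal V$ is globally controlled invariant for $\Delta$.

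I expect the real obstacle to be the local step: checking that the $G$-equivariant extension over a tube is well defined and smooth, which is exactly where the four slice properties and the compactness of $G_p$ are used, and where the hypothesis that the vectors of $\mathcal J_p$ are $G_p$-invariant becomes essential --- without it, $G_p$-averaging yields only sections whose value at $p$ lies in the $G_p$-fixed subspace of $\mathcal J_p$, so one could not recover all of $\mathcal J_q$ from $G$-invariant sections. A secondary point needing care is the preliminary passage, in the $C^{\infty}$ category, from the bracket hypothesis to genuine $G$-invariance of $\Delta$ and $\mathcal J$. Once these are in place, the globalisation via Theorem \ref{tm:4.6} is routine.
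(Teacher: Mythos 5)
Your proposal is correct and follows essentially the same route as the paper's proof: averaging over the compact isotropy group $G_p$, $G$-equivariant extension over the tube $GS_p$, globalisation via a partition of unity on the subcartesian orbit space $P/G$ pulled back to $G$-invariant functions (with the affine-combination check for the drift), and $G$-invariance of the resulting vector fields yielding the bracket conditions with $\Gamma^{\infty}(\mathcal{V})$. If anything, you make explicit two points the paper leaves implicit, namely the smoothness of the equivariant extension via the submersion $G\times S_p\to GS_p$ and the flow argument showing that $G$-invariant fields preserve $\mathcal{V}$.
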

\begin{proof}
Since  $\mathcal{J}$ is smooth,  then for each $p \in P$ and $v_p \in \mathcal{J}_p$, there exists $X \in \Gamma^{\infty}(\mathcal{J})$ such that $X(p)=v_p$. By averaging $X$ over $G_p$, we get a $G_p$-invariant vector field
\[\tilde {X}=\int_{G_p} (\Phi_g)_{*}X d\mu(g),\]
where $d\mu(g)$ is the Haar measure on $G_p$ normalized so that $\text{vol}G_p=1$. Since $[\Gamma^{\infty}(\mathcal{J}), \Gamma^{\infty}(\mathcal{V})] \subseteq \Gamma^{\infty}(\mathcal{J})$, it follows that $(\Phi_g)_*v_q \in \mathcal{J}_{gq}$ for each $v_q \in \mathcal{J}_q$ and each $g \in G$. Hence we have $\tilde {X} \in \Gamma^{\infty}(\mathcal{J})$. Besides, since vectors in  $\mathcal{J}_p$ are $G_p$-invariant, we have
\[\tilde X(p)=\int_{G_p} (\Phi_g)_{*}v_p d\mu(g)=\int_{G_p} v_p d\mu(g)=v_p.\]
Now we can define a $G$-invariant vector field $\hat X$ on the $G$-invariant open neighborhood $GS_p$ as follows. For each $p'' \in GS_p$, there exists $g \in G$ such that $p''=gp'$ for $p' \in S_p$, and we set
\[\hat X(p'')=(\Phi_g)_{*}\tilde X(p').\]
Since $\tilde X$ is $G_p$-invariant, it follows that $\hat X$ is well-defined and $\hat X \in \Gamma^{\infty}(GS_p; \mathcal{J})$. On the other hand, from the proof of Proposition 4.3.2 in \cite{Sniatycki} we know that there exists a $G$-invariant smooth function $f$ on $P$ such that $f(p)=1$, and the support of $f$ is contained in $GS_p$ such that $P\backslash\text{supp}{f}$ is an open subset of $P$. Now consider the $G$-invariant vector field $f\hat X \in \Gamma^{\infty}(GS_p; \mathcal{J})$. It can be extended to a smooth $G$-invariant vector field $X'$ on $P$, which vanishes on $P\backslash\text{supp}{f}$. We have $X' \in \Gamma^{\infty}(\mathcal{J})$, and $X'(p)=f(p)\hat X(p)=\tilde X(p)=v_p$. Since $X'$ is $G$-invariant, it follows that $[X', \Gamma^{\infty}(\mathcal{V})] \subseteq \Gamma^{\infty}(\mathcal{V})$. Hence $\mathcal{V}$ is globally controlled invariant for  $\mathcal{J}$ since $p \in P$ and $v_p \in \mathcal{J}_p$ are arbitrary.

Let $Y \in \Gamma^{\infty}(\Delta)$.  Let $p \in P$. By averaging $Y$ over $G_p$, we get a $G_p$-invariant vector field
\[\tilde Y=\int_{G_p} (\Phi_g)_{*}Y d\mu(g).\]
Since $[Y, \Gamma^{\infty}(\mathcal{V})] \subseteq \Gamma^{\infty}(\mathcal{J})$, it follows that $(\Phi_g)_*v_q \in {\Delta}_{gq}$ for each $v_q \in \Delta_q$ and each $g \in G$, which yields that $\tilde{Y} \in \Gamma^{\infty}(\Delta)$, since $\text{vol}G_p=1$. By using similar arguments as above,  $\tilde Y$ can be extended to a $G$-invariant vector field $\hat Y_p\in \Gamma^{\infty}(GS_p; \Delta)$. Consider the open cover $\{GS_p|p \in P\}$ of $P$. It follows that $\{\pi(GS_p)|p \in P\}$ is an open cover of $P/G$, where $\pi: P \rightarrow P/G$ is the orbit map. Since $P$ is second countable, it follows that $P/G$ with the quotient topology is second countable. Then from Proposition \ref{prop:4.1} and Theorem \ref{tm:4.6}
we know that there exists a countable partition of unity $\{\bar {f}_i\} \in C^{\infty}(P/G)$,
subordinate to $\{\pi(GS_p)|p \in P\}$  and such that the support of each $\bar f_i$ is compact. Now we define the vector field $Y'$ on $P$ by
\[Y'=\sum_i\pi^*\bar f_i\hat Y_{\alpha(i)},\]
where $\alpha(i) \in P$ is such that the support of $\bar f_i$ is contained in $\pi(GS_{\alpha(i)})$. $Y'$ is a smooth $G$-invariant vector field on $P$. Hence $[Y', \Gamma^{\infty}(\mathcal{V})] \subseteq \Gamma^{\infty}(\mathcal{V})$. It remains to show that $Y'(p)\in \Delta(p)$ for each $p \in P$. Actually we have a finite sum at each $p \in P$:
\begin{align*}
Y'(p)&=\sum_{i}(\pi^*\bar f_i)(p)\hat Y_{\alpha(i)}(p)=\sum_{i}\bar f_i(\pi(p))(v_0+v_{p_i})\\
&=(\sum_{i}\bar f_i(\pi(p)))v_0+\sum_{i}\bar f_i(\pi(p))v_{p_i}=v_0+v \in \Delta_p,
\end{align*}
where $v_0 \in \Delta_p, v_{p_i}, v \in \mathcal{J}_p$. This completes the proof. \qed
\end{proof}
If the action of $G$ on $P$ is free and proper, then the tangent distribution $\mathcal{V}$ is regular. We have the following result for global controlled invariance of the involutive regular distribution $\mathcal{V}$.
\begin{theorem}\label{tm:4.5}
Let $G$ be a connected Lie group acting in a free and proper way on a
second countable smooth manifold $P$. Let $\Delta$  be  a smooth generalized affine distribution of $P$ with associated smooth generalized distribution $L(\Delta)=\mathcal{J}$. Assume that
\begin{equation}\label{eq:4.6}
[\Gamma^{\infty}(\Delta), \Gamma^{\infty}(\mathcal{V})] \subseteq \Gamma^{\infty}(\mathcal{J}),
\end{equation}
where $\mathcal{V}$ is defined by \eqref{eq:4.5}.
Then $\mathcal{V}$ is globally controlled invariant for the generalized affine distribution $\Delta$.
\end{theorem}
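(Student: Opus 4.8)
The plan is to derive Theorem~\ref{tm:4.5} as an immediate consequence of Theorem~\ref{tm:4.4}. The only difference between the hypotheses of the two statements is that Theorem~\ref{tm:4.4} additionally requires that, for each $p \in P$, every vector in $\mathcal{J}_p$ be $G_p$-invariant, whereas here we assume instead that the action is free (and proper). The key observation is that freeness of the action forces the isotropy group $G_p$ to be trivial for every $p \in P$, so this extra invariance condition in Theorem~\ref{tm:4.4} becomes vacuous and can simply be discarded.

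More precisely, I would first note that a free and proper action is in particular proper, so the standing hypotheses of Theorem~\ref{tm:4.4} on $G$ and $P$ (a connected Lie group acting in a proper way on a second countable smooth manifold) are met. Since the action is free, for each $p \in P$ the isotropy group $G_p = \{g \in G \mid gp = p\}$ equals $\{e\}$; consequently the identity is the only element of $G$ acting on $\mathcal{J}_p$, and hence every $v \in \mathcal{J}_p$ is trivially $G_p$-invariant. The bracket hypothesis \eqref{eq:4.6} is literally the condition $[\Gamma^{\infty}(\Delta), \Gamma^{\infty}(\mathcal{V})] \subseteq \Gamma^{\infty}(\mathcal{J})$ appearing in Theorem~\ref{tm:4.4}. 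Moreover, as remarked just before the statement, freeness and properness of the action guarantee that $\mathcal{V}$ is a regular (and, as noted earlier, involutive) distribution of $P$, so the conclusion is meaningful in the sense of Definition~\ref{def:1.4}. Thus all hypotheses of Theorem~\ref{tm:4.4} are in force, and applying it gives that $\mathcal{V}$ is globally controlled invariant for $\Delta$.

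Alternatively, one could give a self-contained proof by repeating the argument of Theorem~\ref{tm:4.4} with the simplification that the averaging operators $\tilde X = \int_{G_p}(\Phi_g)_{*}X\, d\mu(g)$ and $\tilde Y = \int_{G_p}(\Phi_g)_{*}Y\, d\mu(g)$ collapse, since the Haar measure on the trivial group $G_p$ is the unit mass at $e$, so that $\tilde X = X$ and $\tilde Y = Y$; the construction of the $G$-invariant extensions over the slice neighbourhoods $GS_p$ and the subsequent gluing via a partition of unity on $P/G$ (supplied by Theorem~\ref{tm:4.6}, using that $P/G$ is subcartesian by Proposition~\ref{prop:4.1} and second countable because $P$ is) then go through verbatim. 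Either way, there is no substantive obstacle: the whole content of Theorem~\ref{tm:4.5} is the remark that freeness trivializes the isotropy groups, which is precisely what lets one drop the $G_p$-invariance hypothesis of Theorem~\ref{tm:4.4}.
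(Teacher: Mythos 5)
Your proposal is correct and follows exactly the paper's own argument: freeness makes every isotropy group $G_p$ trivial, so the $G_p$-invariance hypothesis of Theorem~\ref{tm:4.4} holds vacuously and that theorem applies directly. The additional remarks about the averaging integrals collapsing are consistent but not needed.
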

\begin{proof}
Since the action of $G$ on $P$ is free, it follows that $G_p=e$ for each $p \in P$. Hence vectors in  $\mathcal{J}_p$ are  $G_p$-invariant, for each $p \in P$. Then the result follows immediately from Theorem \ref{tm:4.4}.\qed
\end{proof}
\begin{remark}
In Theorem \ref{tm:4.1}, it is required that $(G+D)/D$ is regular. From \eqref{eq:4.6} we know that $\mathcal{V}\subseteq \mathcal{J}$.
So in the setting of Theorem \ref{tm:4.5},  the above condition is equivalent with requiring that $\mathcal{J}$ is regular,
which is not satisfied in Theorem \ref{tm:4.5}. Despite this,  the result for global controlled invariance still holds as claimed in Theorem \ref{tm:4.5}. This is due to the special geometric structure of manifolds with a symmetry Lie group action.
\end{remark}

\section{Conclusions}
\label{sec:5}
In this paper, we have investigated the problem of controlled invariance of involutive regular distributions for both  affine control systems and generalized affine distributions. A complete characterization of local controlled invariance for both smooth and real analytic affine control systems has been given. We have provided a differential geometric proof for this local result. A sufficient condition for local controlled invariance for real analytic generalized affine distribution has  been presented. We have given sufficient conditions that allow passages from local to  global controlled invariance for both smooth and real analytic generalized affine distribution of general manifolds. Existing results in the literature have been reviewed and clarified. Finally, for smooth manifolds with a proper Lie group action,
the problem of global controlled invariance has  been investigated, where the controlled invariant distribution is not required to be regular. As mentioned in the introduction, the problem of controlled invariance of singular distribution is more complicated and yet has not been solved well.
It will be addressed in our future work.

\begin{acknowledgements}
This work was performed while the author was visiting the Department of Mathematics and Statistics, Queen's University. The  author would like to thank Professor Andrew Lewis for some enlightening talks, for example on the use of Cartan's Theorem B. The author would also like to thank the anonymous reviewers for their careful reading of the manuscript and  a lot of helpful and constructive suggestions which help improve the presentation of the paper a lot. This work was supported by CSC and the NSFC Grant 61703211.
\end{acknowledgements}

% BibTeX users please use one of
%\bibliographystyle{spbasic}      % basic style, author-year citations
%\bibliographystyle{spmpsci}      % mathematics and physical sciences
%\bibliographystyle{spphys}       % APS-like style for physics
%\bibliography{}   % name your BibTeX data base

% Non-BibTeX users please use

\end{document}